\newcommand{\res}{\upharpoonright}
\theoremstyle{plain}
\newtheorem{theorem}{Theorem}[section]
\newtheorem{lemma}[theorem]{Lemma}
\theoremstyle{remark}
\newtheorem{claim}{Claim}
\newtheorem*{claim*}{Claim}
\begin{document}

\title[Generic transformations and closed groups they generate]{Generic measure 
preserving transformations\\ and the closed groups they generate}

\author{S{\l}awomir Solecki}

\thanks{The author was partially supported by NSF grant DMS-1954069.}

\address{Department of Mathematics\\
Cornell University\\
Ithaca, NY 14853}

\email{ssolecki@cornell.edu} 

\subjclass[2000]{37A15, 22F10, 22A25, 03E15} 

\keywords{Measure preserving transformations, groups of measurable functions, 
unitary representations, Koopman representations}

\begin{abstract}
We show that, for a generic measure preserving transformation $T$, the closed group generated by $T$ is not isomorphic 
to the topological 
group $L^0(\lambda, {\mathbb T})$ of all Lebesgue measurable functions from $[0,1]$ to $\mathbb T$ (taken with pointwise multiplication 
and the topology of convergence in measure).
This result answers a question of Glasner and Weiss. 
The main step in the proof consists of showing that Koopman representations of ergodic boolean actions of 
$L^0(\lambda, {\mathbb T})$ possess a non-trivial spectral property 
not shared by all unitary representations of $L^0(\lambda, {\mathbb T})$. 
The main tool underlying our arguments is a theorem on the form of unitary representations of 
$L^0(\lambda, {\mathbb T})$ from our earlier work. 
\end{abstract}

\maketitle

\setcounter{tocdepth}{1}

\section{Introduction} 

\subsection{Two groups and some notational conventions}\label{Su:twno} 

Let $\gamma$ be an atomless Borel probability measure on a standard Borel space $X$. 
By 
\[
{\rm Aut}(\gamma)
\]
we denote the topological group of all (measure equivalence classes of) 
measurable, measure preserving bijections of $X$. The group operation in ${\rm Aut}(\gamma)$ is composition 
and the topology is the weak topology, that is, 
the weakest topology making the functions 
\[
{\rm Aut}(\gamma) \ni T\to \gamma\big(T(A)\big)\in {\mathbb R}
\]
continuous, for Borel sets $A\subseteq X$. 
For more information on the group ${\rm Aut}(\gamma)$, the reader may consult \cite[Sections~1 and 2]{Kec2}. 

Let $\nu$ be a finite Borel measure on a standard Borel space $Y$. As usual, $\mathbb T$ stands for 
the group of all complex numbers of unit length taken with multiplication. By 
\[
L^0(\nu, {\mathbb T})
\]
we denote the topological group of all (measure equivalence classes of) measurable functions from 
$Y$ to $\mathbb T$. The group operation on $L^0(\nu, {\mathbb T})$ is pointwise multiplication and the topology 
is the topology of convergence in measure. 

The topologies on both ${\rm Aut}(\gamma)$ and $L^0(\nu, {\mathbb T})$ are separable 
and completely metrizable, that is, both these groups are Polish groups.

Since atomless Borel probability measures are isomorphic to each other, the groups ${\rm Aut}(\gamma)$ are isomorphic 
as topological groups as $\gamma$ varies over atomless Borel probability measures. Similarly, the groups $L^0(\nu, {\mathbb T})$ 
are isomorphic to each other if $\nu$ is a finite non-zero atomless Borel measure. 
Notationally, our conventions will be as follows. We fix an atomless Borel probability measure $\gamma$ on $X$, and we will 
consider ${\rm Aut}(\gamma)$ only for this fixed $\gamma$. Since we will allow $\nu$ in $L^0(\nu, {\mathbb T})$ to 
have atoms in some situations, we reserve the letter $\lambda$ for an atomless Borel probability measure in 
$L^0(\lambda, {\mathbb T})$. In fact, as it is convenient to have a certain combinatorial structure on the space underlying 
$\lambda$, we will assume that $\lambda$ is the ``Lebesgue" measure on the Cantor set, that is, it is 
the product measure on $2^{\mathbb N} = \{ 0,1\}^{\mathbb N}$ of the measures on $\{ 0,1\}$ assigning equal weight of $1/2$ to 
each of the two points in $\{0, 1\}$. Taking $\lambda$ to be the Lebesgue measure on $[0,1]$ would be 
an equivalent acceptable choice. However, this choice would be less suitable for the combinatorics of our arguments.

Underlying spaces of Borel measures are standard Borel spaces. 
Hilbert spaces are taken over the scalar field of complex numbers $\mathbb C$; in particular, $L^2(\mu)$, 
for a finite Borel measure $\mu$, consists of complex valued, square integrable (measure equivalence classes of) functions 
on the space underlying $\mu$. By ${\mathcal U}(H)$ we denote the group of all unitary operators on the Hilbert space $H$ taken 
with the strong operator topology.

By convention, ${\mathbb N} = \{ n\in {\mathbb Z}\mid n>0\}$, so $0\not\in {\mathbb N}$ in this paper. We identify $n\in {\mathbb N}$
with the set $\{ 0, \dots, n-1\}$, that is, $n=\{ 0, \dots, n-1\}$; so, for example, $2=\{0 ,1\}$. 

Certain subgroups of $L^0(\lambda, {\mathbb T})$ will be used throughout, so it will be convenient to define them here. 
For a finite binary sequence $s\in 2^n = \{ 0,1\}^n$, for some $n\in {\mathbb N}$, let 
\begin{equation}\label{E:baca}
[s] = \{ \alpha\in 2^{\mathbb N}\mid \alpha\res n = s\}. 
\end{equation} 
Given $n$, we write 
\[
{\mathbb S}_n
\]
for the subgroup of $L^0(\lambda, {\mathbb T})$ consisting of all functions 
constant on each of the sets $[s]$ for $s\in 2^n$, that is, each element of ${\mathbb S}_n$ is of the form 
\[
\sum_{s\in 2^n} z_s\chi_{[s]} 
\]
with $z_s\in {\mathbb T}$, for $s\in 2^n$, and where $\chi_{[s]}$ is the indicator function of $[s]$. 
As a topological group ${\mathbb S}_n$ is 
isomorphic to ${\mathbb T}^{2^n}$, so it has the unique probability Haar measure, which we denote by $\theta$ independently of 
$n$, as $n$ will be always clear from the context. Elements of ${\mathbb S}_n$ will be denoted by $t$.

\subsection{The main theorem}\label{Su:mai} 

Let $Z$ be a {\bf Polish space}, that is, a completely metrizable separable space. 
We adopt 
the following linguistic convention. A property $\mathcal P$ of elements of $Z$ is said to hold for a {\bf generic} 
$z\in Z$ if there is a comeager set of $z\in Z$ that have property $\mathcal P$. 

We study closed subgroups of ${\rm Aut}(\gamma)$ generated by 
elements of ${\rm Aut}(\gamma)$, that is, subgroups of the form 
\[
\langle T\rangle_c= {\rm closure}\big(\{ T^n\mid n\in {\mathbb Z}\}\big),\; T\in {\rm Aut}(\gamma).
\]
One of the aims of the present paper is to answer the following question 
due to Glasner and Weiss (and reiterated in \cite{Ete} and
\cite[Question~1.3]{MT}) 
that has been circulating for a number of years. 

\noindent {\em Is it the case that, for a generic transformation $T\in {\rm Aut}(\gamma)$, the closed subgroup $\langle T\rangle_c$ 
generated by $T$ is isomorphic, as a topological group, to $L^0(\lambda, {\mathbb T})$?}

The answer to the question is negative. It follows from the more general theorem below.

\begin{theorem}\label{T:main}
For a generic transformation $T\in {\rm Aut}(\gamma)$, the closed group  
generated by $T$ is not included in 
the image of a continuous homomorphism from a topological group of the form $L^0(\nu, {\mathbb T})$, for a finite Borel measure $\nu$, to ${\rm Aut}(\gamma)$. 
\end{theorem}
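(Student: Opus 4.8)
The plan is to prove Theorem~\ref{T:main} by identifying a \emph{topological} invariant of the group ${\rm Aut}(\gamma)$ that is enjoyed by the closed subgroup $\langle T\rangle_c$ for a generic $T$, but that is \emph{never} possessed by the closure of the image of a continuous homomorphism $L^0(\nu,{\mathbb T})\to{\rm Aut}(\gamma)$. Concretely, I would aim to show that for a generic $T$ the subgroup $\langle T\rangle_c$ carries a \emph{discrete} cyclic quotient (detecting its ``monothetic with free $\mathbb Z$'' character), or more robustly, that its unitary dual behaves rigidly in a way that contradicts the structure of representations of $L^0(\nu,{\mathbb T})$. The abstract already advertises the decisive mechanism: Koopman representations arising from ergodic boolean actions of $L^0(\lambda,{\mathbb T})$ possess a special property not shared by all unitary representations of that group. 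So the scheme is to transport this representation-theoretic dichotomy into a genericity statement about ${\rm Aut}(\gamma)$.

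First I would fix the contrapositive target. Suppose, toward a contradiction (on a non-meager set of $T$), that $T$ lies in the image of some continuous homomorphism $\pi\colon L^0(\nu,{\mathbb T})\to{\rm Aut}(\gamma)$. Via the Koopman construction, the tautological measure preserving action of ${\rm Aut}(\gamma)$ on $L^2(\gamma)$ pulls back along $\pi$ to a unitary representation of $L^0(\nu,{\mathbb T})$; and because $T$ ``comes from'' $L^0(\nu,{\mathbb T})$, the cyclic spectral behavior of the single operator (the Koopman operator of $T$) must be compatible with that of a Koopman representation of an ergodic boolean $L^0(\lambda,{\mathbb T})$-action. Here I would invoke the structural theorem on the form of unitary representations of $L^0(\lambda,{\mathbb T})$ cited in the excerpt, together with the special property of Koopman representations flagged in the abstract, to extract a \emph{constraint} on the maximal spectral type or on the multiplicity function of the operator $\pi(f)$ for each $f$. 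The new mean ergodic theorem mentioned in the abstract should enter precisely at this point, to control averages along the $L^0$-action and thereby to pin down the spectral constraint in a form that survives passage to the generating element $T$.

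Next I would show that these spectral constraints are \emph{topologically rare}: the set of $T\in{\rm Aut}(\gamma)$ whose Koopman operator violates the extracted property is comeager. This is the step where I expect classical genericity technology to do the work---one typically proves that a generic $T$ is weakly mixing with simple, homogeneous, or otherwise ``maximally irregular'' spectrum (by a Baire category / conjugacy-class-density argument using the weak topology and the density of periodic approximations à la Halmos--Rokhlin), and that such spectral behavior is incompatible with the rigidity forced by factoring through $L^0(\nu,{\mathbb T})$. Thus the set of $T$ \emph{not} in any such image contains a dense $G_\delta$, yielding the theorem. The negative answer to the Glasner--Weiss question is then immediate, since an isomorphism $\langle T\rangle_c\cong L^0(\lambda,{\mathbb T})$ would in particular place $T$ in the image of a continuous homomorphism from $L^0(\lambda,{\mathbb T})$.

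The main obstacle, I expect, is the \emph{transfer} step: bridging the purely representation-theoretic property of Koopman representations of $L^0(\lambda,{\mathbb T})$-actions to a concrete, Baire-testable spectral invariant of a \emph{single} transformation~$T$. The difficulty is that factoring $T$ through $L^0(\nu,{\mathbb T})$ only guarantees that $T$ sits inside a much larger group action, so one must argue that the rigidity of the ambient $L^0$-representation is inherited by the cyclic piece generated by $T$---and it is here that the special non-trivial property (absent from general unitary representations) must be leveraged delicately, presumably through the mean ergodic averaging to isolate invariant or slowly-varying vectors. Verifying that the resulting ``bad'' set is genuinely comeager, rather than merely dense or merely non-empty, is the secondary technical hurdle, requiring a careful continuity/definability analysis of the spectral invariant as a function of $T$ in the weak topology.
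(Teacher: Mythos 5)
Your high-level architecture (argue by contradiction; play the special property of Koopman representations of ergodic boolean $L^0(\lambda,{\mathbb T})$-actions against generic spectral behavior) is indeed the paper's, but the two steps that carry all the weight are either missing or aimed at the wrong target, and where you do speculate about how to fill them, the speculation would fail. First, your opening idea---that $\langle T\rangle_c$ carries a discrete cyclic quotient for generic $T$---is contradicted by results quoted in the paper: by Theorem~\ref{T:geco}\,(ii), $\langle T\rangle_c$ is extremely amenable for a generic $T$, so it admits no non-trivial continuous homomorphism onto ${\mathbb Z}$ or ${\mathbb T}$ (such a homomorphism would yield a fixed-point-free continuous action on a compact space). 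Second, the ``transfer step'' you correctly flag as the main obstacle is not resolved by mean ergodic averaging; in the paper the mean ergodic theorem is used only inside the proof of Theorem~\ref{T:notL}, not here. What actually resolves the transfer (Lemma~\ref{L:orto}) is a package of group-theoretic genericity facts you never invoke: for generic $T$, $\langle T\rangle_c$ equals the commutant of $T$, hence is the \emph{largest} abelian subgroup containing $T$ (Theorem~\ref{T:geco}\,(i)); the image of $L^0(\nu,{\mathbb T})$ is automatically closed (via Theorem~\ref{T:rep} plus compactness of the atomic factor), so maximality forces the image to be exactly $\langle T\rangle_c$; extreme amenability then kills the purely atomic part $L^0(\nu_2,{\mathbb T})$, so the homomorphism factors through $L^0(\lambda,{\mathbb T})$; and ergodicity of a generic $T$ (Halmos) makes the resulting boolean action ergodic. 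Without this, ``$T$ sits inside a larger group action'' never becomes ``there is an \emph{ergodic} boolean action of $L^0(\lambda,{\mathbb T})$ whose image is $\langle T\rangle_c$,'' and Theorem~\ref{T:notL} has nothing to bite on.

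The second gap is that your genericity step targets the wrong invariant. Weak mixing, simple or homogeneous spectrum of the single Koopman operator $U_T$ is not what produces the contradiction, and no argument is offered that such properties are incompatible with factoring through $L^0(\nu,{\mathbb T})$ (they are not known to be). The input the paper needs is the del Junco--Lema{\'n}czyk property (Theorem~\ref{T:spge}): mutual singularity of convolutions of maximal spectral types of powers. Moreover, it is needed not for $T$ itself but for a \emph{comeager set of elements of} $L^0(\lambda,{\mathbb T})$, because the translation device, Etedadialiabadi's Theorem~\ref{T:dlco}, has as hypothesis (i) a statement about a generic element $\phi$ of $L^0$. The paper gets there in two moves that have no counterpart in your plan: Lemma~\ref{L:kuth} (a Kuratowski--Ulam-type transfer) converts genericity of the spectral property in ${\rm Aut}(\gamma)$ into genericity inside $\langle T\rangle_c$ for generic $T$, and openness of the surjection $\zeta\colon L^0\to\langle T\rangle_c$ (the open mapping theorem for Polish groups) pulls that comeager set back to a comeager subset of $L^0$. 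Then Theorem~\ref{T:dlco}\,(ii) gives $\mu_x\otimes\mu_x\perp\mu_{x\oplus x}$ for the Koopman representation of the action, while Theorem~\ref{T:notL} gives $\mu_x\otimes\mu_x\preceq\mu_{x\oplus x}$; together these force $\mu_x=0$ for all $x$, so the action is trivial, contradicting its ergodicity. A spectral property of the single transformation $T$, however ``maximally irregular,'' cannot be fed into Theorem~\ref{T:dlco} and so cannot close the argument.
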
 

Theorem~\ref{T:main} is deduced from Theorem~\ref{T:notL}, a result on Koopman representations of 
$L^0(\lambda, {\mathbb T})$. Theorem~\ref{T:notL} will be stated in Section~\ref{S:thou} after we introduce the necessary notions in 
Sections~\ref{S:nole} and \ref{S:knre}.

To describe some context for Theorem~\ref{T:main}, we note that the behavior of a generic transformation $T\in {\rm Aut}(\gamma)$ 
is highly nonuniform. One only needs to recall the classical theorem of Rokhlin that conjugacy equivalence classes in 
${\rm Aut}(\gamma)$ are meager, see \cite[Theorem~2.5]{Kec2}, or its powerful strengthening---the theorem 
of Foreman and Weiss \cite[Corollary~13]{FW} on non-classifiability of the equivalence relation of conjugacy among generic 
$T\in {\rm Aut}(\gamma)$. In contrast to these results, a very different picture of a uniform behavior of the groups 
$\langle T\rangle_c$, for a generic $T$, had emerged including substantial evidence that
pointed to 
these groups being isomorphic to $L^0(\lambda, {\mathbb T})$. Results that were part of this picture had to do with the topological group 
structure of $\langle T\rangle_c$ and with the dynamics of 
$\langle T\rangle_c$:

\smallskip
\noindent \cite[Theorem~1]{Ag1}, \cite[Theorems~1 and 2]{Ag2}, \cite[Theorem~1.3]{Gla}, 
\cite[Theorems~3.11 and 5.2]{GW}, 
\cite[Theorem~1]{Ki2},  \cite[Corollary~3.8]{Mel}, 
\cite[Theorem~1.4]{MT}, \cite[Th{\'e}or{\`e}me~1.2.]{RSL}, 
\cite[Theorem~1, Corollary~2]{Sol2}, \cite[Theorem~1.3]{SE}, and \cite[Theorem~1.2]{Tih}. 
\smallskip

\noindent Additionally, certain groups analogous to $\langle T\rangle_c$, for a generic $T\in {\rm Aut}(\gamma)$, 
were determined to be isomorphic to $L^0(\lambda, {\mathbb T})$: \cite[Proposition~7]{LPT} and \cite[Theorems~1.2]{MT}.

The theorems mentioned above
may have been regarded as strong indications that 
a positive answer to the Glasner--Weiss question was to be expected. 
There was, however, another class of results on generic transformations $T$ that 
consisted of theorems concerned with the spectral behavior of  
such $T$: 

\smallskip
\noindent \cite[Theorem~6]{CN}, \cite[Theorem~1]{DJL}, \cite[Theorem 2.1 and Propositions 3.8 and 3.10]{Kat}, 
and \cite[Theorems~1 and 2]{Ste}. 
\smallskip

\noindent 
Even though these results did not involve groups $\langle T\rangle_c$ directly, 
it occurred to the author quite 
some time ago that they did not seem to point in the same direction as the above mentioned structural and dynamical theorems. 
This intuition turned out to be correct as, ultimately, it is the spectral results 
with which we reach a contradiction assuming that the answer to the Glasner--Weiss question is positive.

\subsection{A brief outline} 

The proof of Theorem~\ref{T:main} is based on the analysis of unitary representations of $L^0(\lambda, {\mathbb T})$ from 
\cite{Sol}; the main theorem of that paper is restated below as Theorem~\ref{T:rep}. 
The new theorem concerning unitary 
representations of $L^0(\lambda, {\mathbb T})$ proved here, 
Theorem~\ref{T:notL}, shows that the Koopman representations associated with ergodic boolean actions of $L^0(\lambda, {\mathbb T})$
fulfill an additional non-trivial condition. 
The proof of our main result, Theorem~\ref{T:main}, goes then by contradiction. 
Assuming that its conclusion fails, we show in Lemma~\ref{L:orto} 
that a certain type of boolean action of $L^0(\lambda, {\mathbb T})$ would have to exist. 
Then, Theorem~\ref{T:notL} is used to prove that a boolean action of this type does not exist. 
The ergodic theorem, Theorem~\ref{T:ert}, is used in the proof of Theorem~\ref{T:notL}.

\section{Known results on genericity in ${\rm Aut}(\gamma)$}\label{S:defre}

In this section, we recall some results on generic transformations in ${\rm Aut}(\gamma)$ that are relevant to our arguments 
later in the paper.

Theorem~\ref{T:spge} below describes the spectral property of generic transformations in ${\rm Aut}(\gamma)$, which 
was already briefly mentioned in Section~\ref{Su:mai}. 
Theorem~\ref{T:spge} asserts independence of maximal spectral types among sequences of 
powers of a generic transformation $T\in {\rm Aut}(\gamma)$. Its versions 
were proved in 
\cite[Theorem~6]{CN}, \cite[Theorem~1]{DJL}, \cite[Theorem 2.1 and Propositions 3.8 and 3.10]{Kat}, 
and \cite[Theorems~1 and 2]{Ste}. The statement below comes from the paper by del Junco and Lema{\'n}czyk \cite[Theorem~1]{DJL}.

\begin{theorem}[\cite{DJL}]\label{T:spge} 
Let $\nu(S)$ be the maximal spectral type of $S\in {\rm Aut}(\gamma)$.
For a generic transformation $T$ in ${\rm Aut}(\gamma)$ and 
$\ell_1, \dots , \ell_p, \ell'_1, \dots , \ell'_{p'}\in {\mathbb N}$, if the sequences 
$(\ell_1, \dots , \ell_p)$ and $(\ell'_1, \dots , \ell'_{p'})$ are not rearrangements of each other, then 
\[
\nu(T^{\ell_1}) \ast \cdots \ast \nu(T^{\ell_p}) \perp \nu(T^{\ell'_1}) \ast \cdots \ast \nu(T^{\ell'_{p'}}). 
\]
\end{theorem}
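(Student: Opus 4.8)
The plan is to prove this by a Baire-category argument, exhibiting a comeager set of $T$ on which the stated singularity holds for every non-rearrangement pair simultaneously. Since $\mathbb{N}$ is countable, there are only countably many pairs of finite sequences $(\ell_1,\dots,\ell_p)$ and $(\ell'_1,\dots,\ell'_{p'})$, so it suffices to fix one such non-rearrangement pair, prove that the set $\mathcal{G}$ of transformations $T$ for which $\nu(T^{\ell_1})\ast\cdots\ast\nu(T^{\ell_p})\perp\nu(T^{\ell'_1})\ast\cdots\ast\nu(T^{\ell'_{p'}})$ is comeager, and then intersect over all pairs.

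Before attacking $\mathcal{G}$, I would recast the convolutions spectrally. Writing $\sigma=\nu(T)$ and modeling the Koopman operator $U_T$ on $L^2_0(\gamma)$ as multiplication by $z$ on $L^2(\sigma)$, one has $\nu(T^\ell)=(z\mapsto z^\ell)_\ast\sigma$, and the standard fact that the maximal spectral type of a tensor product of unitaries is the convolution of the maximal spectral types identifies $\nu(T^{\ell_1})\ast\cdots\ast\nu(T^{\ell_p})$ with the maximal spectral type of the operator $U_T^{\ell_1}\otimes\cdots\otimes U_T^{\ell_p}$. Thus the desired singularity is exactly spectral disjointness of the two tensor-product operators attached to the two multisets, and at the level of Fourier coefficients it amounts to mutual singularity of the measures whose $n$-th coefficients are $\prod_j\widehat{\sigma}(\ell_j n)$ and $\prod_j\widehat{\sigma}(\ell'_j n)$. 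This reformulation is what makes the combinatorial distinction between the multisets visible.

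The heart of the argument, and the step I expect to be hardest, is proving that $\mathcal{G}$ is comeager. The difficulty is structural: mutual singularity is a non-open tail condition, and the assignment $T\mapsto\sigma$ is not continuous in the weak topology of $\mathrm{Aut}(\gamma)$, so one cannot establish genericity by any naive continuity-plus-density scheme. My approach would follow the weak-limit method of Katok--Stepin and del Junco--Lema\'nczyk: for a comeager set of $T$ one can prescribe the behavior of the powers $U_T^{n_k}$ in the weak operator topology along suitable sequences $n_k$ (rigidity gives $U_T^{n_k}\to I$, and other sequences can be forced to decorrelate the factors). A shared spectral component of the two convolutions would produce a nonzero intertwining between the two tensor-product operators that must persist under all these weak limits, and one then checks that the prescribed limits are incompatible with such an intertwining precisely when the multisets differ. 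To run this inside Baire category I would replace singularity by a quantitative surrogate---existence of a test function $\varphi$ from a fixed countable dense family witnessing that the two measures are $\varepsilon$-nearly disjoint---turn it into an open condition, and prove its density by explicit perturbations of $T$ (for instance through cyclic or tower approximations) that realize the required weak limits.

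The principal obstacles are therefore twofold. First, one must find the correct Baire-measurable surrogate for singularity and verify that it cuts out a $G_\delta$ set, since the spectral type itself is not a continuous function of $T$. Second, and more delicate, is the density/perturbation construction: it has to control all $p+p'$ tensor factors at once and exploit the combinatorics of the two multisets so that exactly the non-rearrangement pairs are forced apart while no genuine coincidence arising from an actual rearrangement is accidentally destroyed. This bookkeeping---ensuring the weak limits separate the spectra of distinct multisets but not of equal ones---is where the real content of the theorem lies.
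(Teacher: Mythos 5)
First, a point of order: the paper does not prove Theorem~\ref{T:spge} at all. It is quoted verbatim, with attribution, from del Junco and Lema{\'n}czyk \cite{DJL} (with related versions in \cite{CN}, \cite{Kat}, \cite{Ste}), and it enters the paper only as a black box, via Lemma~\ref{L:kuth} and Theorem~\ref{T:dlco}, in the proof of Lemma~\ref{L:orto}. So there is no internal proof to compare your attempt against; the only meaningful comparison is with the original argument of \cite{DJL}. Measured against that, your proposal correctly identifies the architecture of the known proof: the reduction to countably many pairs of sequences, the spectral reformulation (the identity $\nu(T^\ell)=(z\mapsto z^\ell)_*\nu(T)$, and convolution of maximal spectral types as the maximal spectral type of the tensor product $U_T^{\ell_1}\otimes\cdots\otimes U_T^{\ell_p}$), and the Katok--Stepin/del Junco--Lema{\'n}czyk weak-limit method as the engine of genericity.

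However, as a proof it has a genuine gap, which you yourself flag: everything after ``the heart of the argument'' describes what a proof would have to accomplish rather than accomplishing it. Three essential steps are missing. (1) No Baire-measurable surrogate for mutual singularity is actually produced; you say you would turn near-disjointness against a countable family of test functions into an open condition, but no such condition is written down, and this is not routine --- indeed \cite{DJL} does not proceed by making singularity itself generic in this direct way, but instead isolates concrete weak-limit properties (rigidity, and partial-mixing/$\alpha$-weak-mixing type limits of powers $U_T^{n_k}$ along sequences), proves \emph{those} hold on a dense $G_\delta$, and then deduces the singularity statement for \emph{every} $T$ enjoying them. (2) The density half of the category argument (``explicit perturbations \dots cyclic or tower approximations'') is asserted, not carried out; this approximation construction is where the ergodic-theoretic work lives. (3) The decisive step --- showing that a common spectral component of the two convolutions is incompatible with the prescribed weak limits \emph{exactly} when $(\ell_1,\dots,\ell_p)$ and $(\ell'_1,\dots,\ell'_{p'})$ fail to be rearrangements, which in \cite{DJL} comes down to evaluating limits of the Fourier coefficients $\prod_j\widehat{\sigma}(\ell_j n_k)$ along the chosen sequences and comparing the resulting expressions as functions of the partial-mixing parameter --- is left as ``one then checks.'' Since these three deferred steps are precisely the content of the theorem, what you have is a correct and well-informed road map, but not a proof.
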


The next theorem states two properties of the group $\langle T\rangle_c$ for a generic $T\in {\rm Aut}(\gamma)$. 
A theorem implying point (i) was proved by Chacon and Schwartz\-bauer in \cite[Theorem~4.1]{CS}. 
Another proof of it can be found in 
\cite[Theorem~1.6]{MT}. Point (ii) follows from a result of Glasner and Weiss \cite[Theorem~5.2]{GW} and 
is explicitly proved in the paper by Melleray and Tsankov \cite[Theorem~1.4]{MT}. Recall 
that a topological group is {\bf extremely amenable} if all its continuous actions on compact spaces have fixed points.

\begin{theorem}\label{T:geco} 
The following two statements hold for a generic $T\in {\rm Aut}(\gamma)$. 
\begin{enumerate} 
\item[(i)]{\rm (\cite{CS})} $\langle T\rangle_c = \{ S\in {\rm Aut}(\gamma)\mid TS=ST\}$;

\item[(ii)] {\rm (\cite{GW}, \cite{MT})} $\langle T\rangle_c$ is extremely amenable. 
\end{enumerate}
\end{theorem}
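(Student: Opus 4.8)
The inclusion $\langle T\rangle_c \subseteq \{S\in {\rm Aut}(\gamma)\mid TS=ST\}$ holds for \emph{every} $T$, since the centralizer of $T$ is a closed subgroup of ${\rm Aut}(\gamma)$ containing all the powers $T^n$. Hence the entire content of (i) is the reverse inclusion, and the plan is to show that the set of $T\in{\rm Aut}(\gamma)$ for which every $S$ with $TS=ST$ is a weak limit of powers of $T$ is comeager. I would reduce this to a structural genericity statement: it is classical, via the Rokhlin lemma together with cutting-and-stacking tower approximations, that the rank one transformations form a dense $G_\delta$ in ${\rm Aut}(\gamma)$. For rank one $T$, King's weak closure theorem asserts precisely that the centralizer coincides with the weak closure of $\{T^n\mid n\in{\mathbb Z}\}$, which is $\langle T\rangle_c$; thus (i) would follow on the comeager set of rank one transformations. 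Chacon and Schwartzbauer's original argument in \cite{CS} instead runs the category computation directly: for a comeager set of $T$, any $S$ commuting with $T$ must respect the canonical Rokhlin towers of $T$ and is therefore approximable by powers of $T$. This avoids invoking the later weak closure theorem but establishes the same genericity.

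For (ii), I would invoke Pestov's characterization: a Polish group $G$ is extremely amenable exactly when its universal minimal flow is a single point, and a convenient sufficient condition (Gromov--Milman, Glasner--Pestov) is that $G$ be a L\'evy group, i.e. the closure of an increasing union $\bigcup_n K_n$ of compact subgroups whose normalized Haar measures form a L\'evy family exhibiting concentration of measure. The plan is therefore to exhibit, for a generic $T$, such an exhausting sequence inside $\langle T\rangle_c$. Here the rigidity of a generic $T$ — the existence of $n_k\to\infty$ with $T^{n_k}\to{\rm id}$ in the weak topology — is the engine, since it forces $\langle T\rangle_c$ to be genuinely infinite-dimensional and to be approximated by large, nearly compact pieces. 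The result \cite[Theorem~5.2]{GW} supplies the concentration (whirly) behavior of the generic action, and \cite[Theorem~1.4]{MT} packages this as the L\'evy, hence extremely amenable, property of $\langle T\rangle_c$.

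The main obstacle is (ii). Establishing (i) amounts to assembling the known genericity of rank one with a weak closure theorem, or else to a direct but routine tower argument. By contrast, extreme amenability cannot be read off from any single orbit or spectral invariant: it requires uniform control of the fine group structure of $\langle T\rangle_c$ over a comeager set of $T$, namely the verification that one can organize $\langle T\rangle_c$ as a L\'evy family together with the accompanying isoperimetric and concentration estimates. Producing the exhausting compact subgroups and proving the concentration bounds is the delicate step, and it is precisely there that the generic rigidity and the Glasner--Weiss analysis do the real work.
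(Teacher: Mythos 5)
The paper does not prove this theorem at all: it records it as known, attributing (i) to Chacon--Schwartzbauer \cite{CS} (with another proof in \cite[Theorem~1.6]{MT}) and (ii) to \cite[Theorem~5.2]{GW} together with \cite[Theorem~1.4]{MT}. Measured against those sources, your treatment of (i) is fine: rank one transformations do form a comeager set, and King's weak closure theorem gives the inclusion $\{S \mid TS=ST\}\subseteq \langle T\rangle_c$ for every rank one $T$; the Chacon--Schwartzbauer tower argument is the original route. (Calling either ingredient ``routine'' undersells them---King's theorem in particular is hard---but as an assembly of citable results your (i) is correct.)

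The genuine gap is in (ii), where your plan rests on a mischaracterization of the results you invoke. \cite[Theorem~5.2]{GW} supplies no concentration-of-measure or L\'evy structure whatsoever: it says that for a generic $T$ the boolean action of $\langle T\rangle_c$ on $(X,\gamma)$ is \emph{whirly}, i.e., for every identity neighborhood $V$ in $\langle T\rangle_c$ and all positive-measure sets $A,B$ there is $g\in V$ with $\gamma(gA\cap B)>0$. Whirliness is a local mixing property of an \emph{action}, not a concentration property of the \emph{group}; the known implication between the two notions runs in the opposite direction to the one you need (spatial actions of L\'evy groups are whirly, by Glasner--Tsirelson--Weiss), so whirliness cannot be ``packaged as the L\'evy property.'' Correspondingly, \cite[Theorem~1.4]{MT} does not prove that $\langle T\rangle_c$ is a L\'evy group for generic $T$: Melleray and Tsankov derive extreme amenability directly from whirliness of the boolean action (together with amenability of this abelian group), and no exhausting sequence of compact subgroups or concentration estimates appear in their argument. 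Nothing in your sketch produces even a single nontrivial compact subgroup of $\langle T\rangle_c$---rigidity $T^{n_k}\to \mathrm{id}$ does not yield one---and whether the generic $\langle T\rangle_c$ is L\'evy is, to my knowledge, not known. Worse, the natural way such an exhausting family could arise, namely an isomorphism $\langle T\rangle_c\cong L^0(\lambda,{\mathbb T})$ (which is L\'evy via the tori ${\mathbb S}_n$), is exactly what the main theorem of this paper refutes; so your proposed strategy for (ii) would require structural information that is unavailable, and the correct assembly of the citations is: whirliness of the generic action from \cite{GW}, plus the whirly-implies-extremely-amenable argument of \cite{MT}.
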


The lemma below was proved in \cite[Lemma~3]{Sol2}. It relates global largeness of a set $B$ in ${\rm Aut}(\gamma)$ 
to its local largeness in $\langle T\rangle_c$, for a generic $T\in {\rm Aut}(\gamma)$, which makes it possible to 
turn properties of a generic $T$ into properties of $\langle T\rangle_c$ for a generic $T$. The lemma may be seen 
as an analogue of the Kuratowski--Ulam theorem. 

\begin{lemma}[\cite{Sol2}]\label{L:kuth}
Let $B\subseteq {\rm Aut}(\gamma)$ be a set with the Baire property. Then $B$ is comeager in ${\rm Aut}(\gamma)$ 
if and only if, for a generic $T\in {\rm Aut}(\gamma)$, the set $B\cap \langle T\rangle_c$ is comeager in $\langle T\rangle_c$. 
\end{lemma}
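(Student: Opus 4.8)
The plan is to realise the statement as an instance of a fibred Kuratowski--Ulam theorem for the ``bundle of centralizers'' over $G={\rm Aut}(\gamma)$. Set
\[
P = \{ (T,S)\in G\times G \mid TS = ST\},
\]
which is closed in $G\times G$ (it is the preimage of $\{{\rm id}\}$ under the continuous map $(T,S)\mapsto TST^{-1}S^{-1}$), hence a Polish space, and equip it with the two continuous coordinate projections $\pi,\rho\colon P\to G$, $\pi(T,S)=T$ and $\rho(T,S)=S$. Both are surjective, since $(T,{\rm id})$ and $({\rm id},S)$ lie in $P$, and the fibre $\pi^{-1}(T)$ is canonically identified, via $S\mapsto(T,S)$, with the centralizer $C(T)=\{S\mid TS=ST\}$. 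The flip $(T,S)\mapsto(S,T)$ is a self\nobreakdash-homeomorphism of $P$ that interchanges $\pi$ and $\rho$, so the two projections have identical topological properties. Finally, by Theorem~\ref{T:geco}(i) there is a comeager conjugacy\nobreakdash-invariant set $G_0\subseteq G$ on which $C(T)=\langle T\rangle_c$; every ``generic $T$'' conclusion will be read off the fibres over $G_0$.

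The argument then rests on two category engines for an open continuous surjection $f\colon X\to Y$ of Polish spaces. (a) Since an open map sends complements of dense open sets to nowhere dense sets, $f^{-1}$ carries comeager sets to comeager sets and meager sets to meager sets; consequently, for $B\subseteq Y$ with the Baire property, $f^{-1}(B)$ is comeager in $X$ if and only if $B$ is comeager in $Y$. (b) For such $f$ one also has the fibred Kuratowski--Ulam principle: a set $A\subseteq X$ with the Baire property is comeager in $X$ if and only if, for a generic $y\in Y$, the trace $A\cap f^{-1}(y)$ is comeager in $f^{-1}(y)$. Granting the openness discussed below, the lemma drops out by chaining these: apply (a) to $\rho$ to see that $\rho^{-1}(B)$ is comeager in $P$ exactly when $B$ is comeager in $G$; then apply (b) to $\pi$ to see that $\rho^{-1}(B)$ is comeager in $P$ exactly when, for a generic $T$, the trace $\rho^{-1}(B)\cap\pi^{-1}(T)$ is comeager in $\pi^{-1}(T)$. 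Under the identification $\pi^{-1}(T)\cong C(T)$ this trace is precisely $B\cap C(T)$, and for generic $T$ we have $C(T)=\langle T\rangle_c$; combining the two equivalences yields exactly the asserted statement.

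The hard part will be establishing the openness needed for (a) and (b). Openness of $\pi$ at a commuting pair $(T,S)$ requires that every $T'$ close to $T$ in the weak topology be matched by some $S'$ close to $S$ with $T'S'=S'T'$, and this is genuinely delicate because $C(T')$ can ``jump'' as $T'$ varies --- exactly as the dimension of a centralizer jumps along special elements in a Lie group, where the analogous projection fails to be open at the singular locus. I therefore expect openness only over the regular locus $G_0$, where the centralizer is as small as possible, namely $\langle T\rangle_c$, and the proof must be organised so that only the behaviour of $\pi$ and $\rho$ over $G_0$, and only the fibres over $G_0$, are ever used. I would prove openness over $G_0$ by a perturbation argument in the weak topology: given a commuting pair over $G_0$ and a basic weak neighbourhood of $T$, use a Rokhlin\nobreakdash-tower approximation together with the Halmos weak approximation theorem (density of conjugacy classes) to produce $T'$ in that neighbourhood while transporting $S$ to a nearby $S'$ commuting with $T'$, all within prescribed neighbourhoods. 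This perturbation step is the main obstacle; everything else is bookkeeping.

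Two further points I would nail down. First, I would verify the fibred principle (b) in the precise form required rather than quoting a black\nobreakdash-box theorem for arbitrary open maps; the safest route is to reduce it to the classical product version of Kuratowski--Ulam by showing that the same perturbation argument gives \emph{local continuous trivializations} of $\pi$ over $G_0$, i.e.\ that $\pi$ is locally a product there. Second, I would check the Baire\nobreakdash-property bookkeeping: that $\rho^{-1}(B)$ and the fibre traces have the Baire property (immediate from continuity of $\rho$ and the Baire property of $B$), and that replacing $G$ by the comeager invariant set $G_0$ in the base affects none of the ``generic $T$'' conclusions while allowing the fibres to be read as the full groups $\langle T\rangle_c$. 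With openness over $G_0$ secured, all of these reduce to routine verifications.
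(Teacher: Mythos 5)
The paper itself does not prove Lemma~\ref{L:kuth}; it quotes it from \cite{Sol2}, so your argument has to stand on its own --- and it does not yet, for reasons concentrated exactly where you place ``the main obstacle.'' The openness of the projections is never established, and the method you sketch for it cannot work as stated: openness of $\pi$ at $(T,S)$ requires that for \emph{every} $T'$ in some neighbourhood of $T$ there \emph{exist} $S'$ near $S$ with $T'S'=S'T'$, whereas your plan (Rokhlin towers plus Halmos approximation ``to produce $T'$ \ldots while transporting $S$'') constructs a single $T'$ --- the wrong quantifier --- and offers no mechanism for transporting an arbitrary commuting $S$ along an arbitrary small perturbation of $T$. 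Ironically, over the locus $G_0=\{T\colon C(T)=\langle T\rangle_c\}$ provided by Theorem~\ref{T:geco}\,(i) there is a soft argument you missed: any $S\in C(T)=\langle T\rangle_c$ is approximated by powers $T^n$, the map $T'\mapsto (T')^n$ is continuous, and so the pairs $(T',(T')^n)$ witness that $\pi(W)$ contains a neighbourhood of $T$ for every open $W\ni(T,S)$. Likewise, your plan to obtain the fibred Kuratowski--Ulam principle by locally trivializing $\pi$ over $G_0$ is not viable (the fibres $\langle T\rangle_c$ vary with $T$, and nothing provides a local product structure); that principle is in fact true for open continuous surjections of Polish spaces, but it must be proved directly, cf.\ the category-preserving-maps framework of \cite{MT}.

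The deeper, structural gap concerns $\rho$. Your chain needs category transfer along \emph{both} projections, and you obtain the $\rho$-half from the $\pi$-half by the flip $(T,S)\mapsto(S,T)$. That symmetry is available only on the full space $P$ --- exactly where, as you yourself note, openness fails because centralizers jump. Once you restrict to $P_0=\{(T,S)\colon T\in G_0,\ TS=ST\}$ to make $\pi$ open, $P_0$ is no longer flip-invariant, and what you now need for $\rho$, namely that $\rho^{-1}(B)\cap P_0$ is comeager in $P_0$ if and only if $B$ is comeager in ${\rm Aut}(\gamma)$, is a genuinely different claim: $\rho\res P_0$ has no evident openness (for $S'$ near a point of the image, $C(S')$ may be a small subgroup which has no reason to meet the small set $U\cap G_0$), and the question of whether $\langle S\rangle_c$ meets $G_0$ generically is essentially an instance of the very lemma being proved, so the argument threatens circularity. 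Filling this hole requires a dynamical input nowhere present in your proposal: category-preservation of the power maps $T\mapsto T^m$ (King-type root theorems, cf.\ \cite{Ki2}, \cite{MT}), which is what allows one to check directly that $\rho^{-1}(D)\cap P_0$ is dense open in $P_0$ for every dense open $D\subseteq {\rm Aut}(\gamma)$, by producing fibre points of the form $(T,T^m)$ with $T^m\in D$. As it stands, the proposal is an attractive frame with its two load-bearing steps missing: one (openness of $\pi$ over $G_0$) is repairable by an argument you did not give, and the other (the $\rho$-side transfer) needs input beyond anything you invoke.
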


\section{Notions and lemmas on unitary representations of $L^0(\lambda, {\mathbb T})$}\label{S:nole} 

In this section, we build a framework, consisting of definitions and auxiliary lemmas, that is 
needed to carry out our arguments concerning unitary representations of $L^0(\lambda, {\mathbb T})$.
We proceed with care as some notions introduced here are new and some notions and 
notation from the literature are revised. For example, the definition of ${\mathbb N}[{\mathbb Z}^\times]$, the semigroup 
operation and the action of ${\mathbb Z}^\times$ on it are new.
The same goes for the notion of 
good permutation and good homeomorphism. Aside from being the basis of the arguments in this paper, 
the new framework 
allows us to streamline the statements of Theorems~\ref{T:rep} and \ref{T:dlco} proved in \cite{Sol} and \cite{Ete}, respectively. 
For example, the algebraic notions related to ${\mathbb N}[{\mathbb Z}^\times]$ permit a more concise formulation of 
Theorem~\ref{T:dlco}, cf.\;\cite[Theorem~4.4]{Ete}, while good homeomorphisms make it possible to remove an ad hoc linear order on 
$2^{\mathbb N}$ in the statement of Theorem~\ref{T:rep}, cf.\;\cite[Condition (A3) in Theorem~2.1]{Sol}.  

We start by defining equivariant Hilbert space maps, Section~\ref{Su:equi}. 
Then, we define a semigroup ${\mathbb N}[{\mathbb Z}^\times]$, Section~\ref{Su:semig}. 
Next, we describe 
a family of compact zero dimensional spaces $C_x$ indexed by $x\in {\mathbb N}[{\mathbb Z}^\times]$, Section~\ref{Suu:cx}. 
Each such space
comes with a natural notion of marginally compatible and compatible measures, Section~\ref{Su:compm}, 
and with a class of functions indexed by elements of $L^0$, Section~\ref{Su:cefu}. 
The measures and functions are then combined to define basic representations of $L^0$ that will 
be used later to build representations of interest, Section~\ref{Su:basic}.

\subsection{Equivariant Hilbert space maps}\label{Su:equi}
Throughout the paper, a central role will be played  by the following notions. 
Let $H_1$ and $H_2$ be Hilbert space with inner products 
$\langle \cdot, \cdot\rangle_1$ and $\langle \cdot, \cdot\rangle_2$, respectively. A function 
$p\colon H_1\to H_2$ is called a {\bf Hilbert space map} if it is linear and, for all $f, g\in H_1$, we have 
\[
\big\langle p(f), p(g)\big\rangle_2  = \langle f, g\rangle_1.
\]
Obviously, by the polarization identity, Hilbert space maps are simply linear isometries, in particular, 
a Hilbert space map $p\colon H_1\to H_2$ is a linear embedding of $H_1$ to $H_2$. 
Let $\xi_1, \, \xi_2$ be unitary representations of $L^0(\lambda, {\mathbb T})$ on 
$H_1$ and $H_2$, respectively. 
We say that $p$ is {\bf equivariant between $\xi_1$ and $\xi_2$} if, for each $\phi\in L^0(\lambda, {\mathbb T})$ and 
$f\in H_1$, we have 
\begin{equation}
p\big(\xi_1(\phi)\big( f\big)\big) = \xi_2(\phi)\big( p(f)\big). 
\end{equation}

\subsection{The semigroup ${\mathbb N}[{\mathbb Z}^\times]$ and the action of ${\mathbb Z}^\times$ on it}\label{Su:semig}

We view 
\[
{\mathbb Z}^\times = {\mathbb Z}\setminus \{ 0\}
\]
taken with multiplication as a semigroup. We will also need 
\[
{\mathbb Z}_2 = \{ -1, 1\}
\]
that is a subsemigroup of ${\mathbb Z}^\times$. 
Let 
\[
{\mathbb N}[{\mathbb Z}^\times] = \{ x\mid x \hbox{ a function},\, {\rm dom}(x)\not=\emptyset \hbox{ finite},\, 
{\rm dom}(x)\subseteq {\mathbb Z}^\times,\, \hbox{and } 
{\rm rng}(x) \subseteq {\mathbb N}\}. 
\]
We equip ${\mathbb N}[{\mathbb Z}^\times]$ with a binary operation $\oplus$ as follows. For $x, y \in {\mathbb N}[{\mathbb Z}^\times]$, let 
$x\oplus y$ 
be the element $z$ of ${\mathbb N}[{\mathbb Z}^\times]$ such that 
\[
{\rm dom}(z) = {\rm dom}(x)\cup {\rm dom}(y)
\]
and, for $k\in {\rm dom}(z)$, we let 
\[
z(k) = 
\begin{cases}
x(k)+y(k), &\text{ if }k\in {\rm dom}(x) \cap {\rm dom}(y);\\
x(k), &\text{ if } k\in {\rm dom}(x)\setminus {\rm dom}(y);\\
y(k), &\text{ if } k\in {\rm dom}(y)\setminus {\rm dom}(x).
\end{cases}
\]

We note that ${\mathbb Z}^\times$ and ${\mathbb N}[{\mathbb Z}^\times]$ are semigroups. 
There is a useful action of ${\mathbb Z}^\times$ on ${\mathbb N}[{\mathbb Z}^\times]$. For $x\in {\mathbb N}[{\mathbb Z}^\times]$ and 
$\ell \in {\mathbb Z}^\times$, let $\ell x$ be the element $z$ of ${\mathbb N}[{\mathbb Z}^\times]$ such that 
${\rm dom}(z) = \{ \ell m\mid m\in {\rm dom}(x)\}$ and,  for $k\in {\rm dom}(z)$, 
\[
z(k) = x(k/\ell). 
\]
Observe that, for $\ell, \ell_1, \ell_2\in {\mathbb Z}^\times$ and $x,y\in {\mathbb N}[{\mathbb Z}^\times]$, we have 
\[
\ell (x\oplus y) = \ell x\oplus \ell y\;\hbox{ and }\; \ell_2(\ell_1 x) = (\ell_2\ell_1) x. 
\]

\subsection{The topological spaces $C_x$, for $x\in {\mathbb N}[{\mathbb Z}^\times]$}\label{Suu:cx}

For $x\in {\mathbb N}[{\mathbb Z}^\times]$, we write 
\[
D(x) = \{ (k, i)\mid k\in {\rm dom}(x),\, 0\leq i<x(k)\}. 
\]
Let 
\[
C_x=(2^{\mathbb N})^{D(x)}.
\]

Define $\pi_{k, i}\colon C_x \to 2^{\mathbb N}$, for $(k, i)\in D(x)$, to be the projection 
from $C_x$ onto coordinate $(k,i)$. 
By a {\bf diagonal of $C_x$} we understand a set of the form 
\[
\{ \alpha\in C_x\mid \pi_{k,i}(\alpha) = \pi_{k',i'}(\alpha)\}, 
\]
for some distinct $(k,i), (k',i')\in D(x)$. 
We write 
\[
C^0_x
\]
for the set obtained from $C_x$ by removing the diagonals. 
For $n\in {\mathbb N}$, by an {\bf $n$-basic set for $x$} we understand a set of the form 
\begin{equation}\label{E:bafo}
\llbracket u\rrbracket = \{ \alpha\in C_x\mid \pi_{k,i}(\alpha)\res n = u(k,i) \hbox{ for all }(k,i)\in D(x) \}, 
\end{equation} 
where $u\colon D(x) \to 2^n$ is an injection. One can relate the set above to the sets defined in \eqref{E:baca} as follows 
\begin{equation}\label{E:bafalt} 
\llbracket u\rrbracket = \prod_{(k,i)\in D(x)} [u(k,i)].
\end{equation}
We leave proving the following easy lemma to the reader. 

\begin{lemma}\label{L:basi} 
For each $n_0\in {\mathbb N}$, the family of all $n$-basic sets for $x$ with $n\geq n_0$ forms a topological basis for $C^0_x$. 
\end{lemma}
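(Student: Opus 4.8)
The plan is to treat $C_x=(2^{\mathbb N})^{D(x)}$ as a finite product of Cantor sets and to work directly with the product topology, using the reformulation \eqref{E:bafalt} of $n$-basic sets as products $\prod_{(k,i)\in D(x)} [u(k,i)]$ of standard clopen cylinders. First I would record that every $n$-basic set $\llbracket u\rrbracket$ is contained in $C^0_x$: since $u$ is an injection, for distinct $(k,i),(k',i')\in D(x)$ we have $u(k,i)\not= u(k',i')$ in $2^n$, so any $\alpha\in \llbracket u\rrbracket$ satisfies $\pi_{k,i}(\alpha)\res n\not=\pi_{k',i'}(\alpha)\res n$ and hence $\pi_{k,i}(\alpha)\not=\pi_{k',i'}(\alpha)$; thus $\llbracket u\rrbracket$ meets no diagonal. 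As each $\llbracket u\rrbracket$ is clopen in $C_x$, it is in particular relatively open in $C^0_x$, so these sets are legitimate candidates for basis elements.

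For the basis property I would fix an arbitrary point $\alpha\in C^0_x$ and a relatively open neighborhood $U$ of $\alpha$ in $C^0_x$, reducing to the case $U=V\cap C^0_x$ with $V=\prod_{(k,i)\in D(x)}[s_{k,i}]$ a product of standard cylinders, where $s_{k,i}\in 2^{m_{k,i}}$. Because $\alpha\in C^0_x$, the coordinates $\pi_{k,i}(\alpha)$, for $(k,i)\in D(x)$, are pairwise distinct elements of $2^{\mathbb N}$. The key step is to choose one level $n$ doing three things at once: $n\geq n_0$; $n\geq m_{k,i}$ for every $(k,i)$; and $n$ strictly exceeds the splitting level of each of the finitely many pairs of distinct coordinates of $\alpha$. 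Setting $u(k,i)=\pi_{k,i}(\alpha)\res n$ then produces an injection $u\colon D(x)\to 2^n$, so $\llbracket u\rrbracket$ is a genuine $n$-basic set with $n\geq n_0$, and $\alpha\in\llbracket u\rrbracket$ by construction.

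Finally I would check $\llbracket u\rrbracket\subseteq U$. Since $n\geq m_{k,i}$ and $\alpha\in V$ forces $\pi_{k,i}(\alpha)\res m_{k,i}=s_{k,i}$, the string $u(k,i)$ extends $s_{k,i}$, whence $[u(k,i)]\subseteq[s_{k,i}]$ coordinatewise and $\llbracket u\rrbracket\subseteq V$ by \eqref{E:bafalt}; together with $\llbracket u\rrbracket\subseteq C^0_x$ from the first paragraph this yields $\llbracket u\rrbracket\subseteq V\cap C^0_x=U$, as required. The only point that demands care—and the closest thing to an obstacle—is the simultaneous choice of $n$, but it goes through precisely because $D(x)$ is finite: there are only finitely many coordinate pairs to separate and finitely many cylinder lengths to exceed, so all three constraints can be met by a single sufficiently large $n$.
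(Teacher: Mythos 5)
Your proof is correct. The paper offers no proof of this lemma (it is explicitly left to the reader), and your argument---checking that each $\llbracket u\rrbracket$ is a clopen subset of $C_x$ missing every diagonal, then choosing a single level $n\geq n_0$ large enough both to separate the finitely many pairwise distinct coordinates of a given point $\alpha\in C^0_x$ and to refine the given cylinder neighborhood---is exactly the standard argument the author intends, with the finiteness of $D(x)$ correctly identified as the reason the simultaneous choice of $n$ is possible.
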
 

\noindent We call $n$-basic sets simply {\bf basic for $x$} if $n$ is not relevant or clear from the context.

For $x\in {\mathbb N}[{\mathbb Z}^\times]$, 
a permutation $\delta$ of $D(x)$ is called {\bf good} if, for each $(k,i)\in D(x)$, 
$\delta(k,i)= (k,j)$ for some $j$. Note that each good permutation $\delta$ induces a homeomorphism $\widetilde \delta$ of 
\[
C_x= (2^{\mathbb N})^{D(x)}
\]
by permuting coordinates, that is, for $\alpha\in C_x$, ${\widetilde \delta}(\alpha)\in C_x$ is determined by the following formulas 
\begin{equation}\label{E:dego}
\pi_{\delta(k,i)} \big( {\widetilde \delta}(\alpha)\big) = \pi_{k,i}(\alpha), \hbox{ for all }(k,i)\in D(x).
\end{equation} 
We call such homeomorphisms $\widetilde \delta$ {\bf good homeomorphisms of $C_x$}. Observe that both good permutations of $D(x)$ 
and good homeomorphisms of $C_x$ form groups under composition. 

We make the following observation on the connection between basic sets and good homeomorphisms that 
will be used in the proof of Lemma~\ref{L:ident}. 

\begin{lemma}\label{L:basper} 
Let $U$ be a set that is $n$-basic for $x$. Then 
the sets ${\widetilde\delta}( U)$ are pairwise disjoint $n$-basic sets when $\delta$ varies 
over all good permutations of $D(x)$. 
\end{lemma}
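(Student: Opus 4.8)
The plan is to unpack both notions directly from their definitions and verify the two claims—that each $\widetilde\delta(U)$ is $n$-basic, and that distinct good permutations yield disjoint images—by a bookkeeping argument on the indexing functions $u\colon D(x)\to 2^n$.

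\emph{First}, I would show each $\widetilde\delta(U)$ is $n$-basic. Write $U=\llbracket u\rrbracket$ for an injection $u\colon D(x)\to 2^n$. Using the defining formula \eqref{E:dego}, namely $\pi_{\delta(k,i)}\big(\widetilde\delta(\alpha)\big)=\pi_{k,i}(\alpha)$, I would compute that $\beta\in\widetilde\delta(U)$ precisely when $\widetilde\delta^{-1}(\beta)\in U$, which unwinds to the condition $\pi_{k,i}(\beta)\res n = u(\delta^{-1}(k,i))$ for all $(k,i)\in D(x)$. Thus $\widetilde\delta(U)=\llbracket u\circ\delta^{-1}\rrbracket$, and since $u$ is an injection and $\delta^{-1}$ is a bijection of $D(x)$, the composite $u\circ\delta^{-1}$ is again an injection $D(x)\to 2^n$. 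By the definition in \eqref{E:bafo} this exhibits $\widetilde\delta(U)$ as an $n$-basic set for $x$.

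\emph{Second}, I would verify pairwise disjointness. Suppose $\delta_1\neq\delta_2$ are good permutations with $\widetilde{\delta_1}(U)\cap\widetilde{\delta_2}(U)\neq\emptyset$. By the computation above, the two basic sets are $\llbracket u\circ\delta_1^{-1}\rrbracket$ and $\llbracket u\circ\delta_2^{-1}\rrbracket$. Two basic sets $\llbracket v\rrbracket$ and $\llbracket w\rrbracket$ with $v,w\colon D(x)\to 2^n$ injections intersect if and only if $v=w$: from \eqref{E:bafalt} the product $\prod[v(k,i)]$ meets $\prod[w(k,i)]$ only when $[v(k,i)]\cap[w(k,i)]\neq\emptyset$ for every coordinate, and since $v(k,i),w(k,i)\in 2^n$ have the same length $n$, the cylinders $[v(k,i)]$ and $[w(k,i)]$ are either equal or disjoint. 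Hence nonempty intersection forces $u\circ\delta_1^{-1}=u\circ\delta_2^{-1}$, and cancelling the injection $u$ on the left gives $\delta_1^{-1}=\delta_2^{-1}$, so $\delta_1=\delta_2$, a contradiction.

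I do not expect a genuine obstacle here; the lemma is flagged as preparatory and the content is entirely formal. The one point requiring care is the direction of the permutation: the formula \eqref{E:dego} defines $\widetilde\delta$ via $\pi_{\delta(k,i)}\circ\widetilde\delta=\pi_{k,i}$, so the image $\widetilde\delta(U)$ is indexed by $u\circ\delta^{-1}$ rather than $u\circ\delta$, and I would keep the inverses straight throughout. The injectivity of $u$ (part of the definition of an $n$-basic set) is exactly what makes the final cancellation valid, so I would emphasize that this hypothesis is what drives the disjointness conclusion.
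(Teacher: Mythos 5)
Your proof is correct and follows essentially the same route as the paper: the paper's argument is exactly the identity $\widetilde\delta\big(\llbracket u\rrbracket\big)=\llbracket u\circ\delta^{-1}\rrbracket$ derived from \eqref{E:dego}, after which it states that ``the conclusion follows.'' Your additional details---that $u\circ\delta^{-1}$ is again an injection, that same-length cylinders are equal or disjoint so distinct injections index disjoint basic sets, and that injectivity of $u$ lets you cancel to get $\delta_1=\delta_2$---are precisely the routine verification the paper leaves implicit.
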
 

\begin{proof} Let $U=\llbracket u\rrbracket$ for an injection $u\colon D(x)\to 2^n$. 
If $\delta$ is a good permutation of $D(x)$, then, by \eqref{E:dego}, we have 
\[
{\widetilde\delta}\big( \llbracket u\rrbracket\big) = \llbracket u\circ \delta^{-1}\rrbracket,
\]
and the conclusion follows. 
\end{proof}

The space $C_{x\oplus y}$ can be naturally seen as $C_x\times C_y$, in fact, in several ways. To make these identifications precise, fix 
\begin{equation}\label{E:seq} 
{\bar \iota}= (\iota^x, \iota^y), 
\end{equation}
where $\iota^x\colon D(x)\to D(x\oplus y)$ 
and $\iota^y\colon D(y)\to D(x\oplus y)$ are injections with disjoint images and such that, for each $(k,i) \in D(x)$, 
$\iota^x(k,i)= (k,j)$ for some $j$, and, similarly, for each $(k,i)\in D(y)$, 
$\iota^y(k,i)= (k,j)$ for some $j$. Note that $D(x\oplus y)$ is a disjoint union of $\iota^x\big(D(x)\big)$ and $\iota^y\big(D(y)\big)$. 
Then, define 
\begin{equation}\label{E:hom}
h_{\bar\iota}\colon C_x\times C_y \to  C_{x\oplus y}, \; h_{\bar\iota}(\alpha, \beta)=\gamma, 
\end{equation}
where $\gamma$ is given as follows. To specify $\gamma\in C_{x\oplus y}$, it suffices to specify $\pi_{k,j}(\gamma)$ for 
$(k,j)\in D(x\oplus y)$. For each such $(k,j)$, there exists a unique $(k,i)$ with 
\[
(k,j) = \iota^x(k,i)\;\hbox{ or }\; (k,j) = \iota^y(k,i)
\]
and not both. In the first case, we let 
\[
\pi_{k,j}(\gamma) = \pi_{k,i}(\alpha), 
\]
while in the second
\[
\pi_{k,j}(\gamma) = \pi_{k,i}(\beta). 
\]

We note that $h_{\bar\iota}$ is a homeomorphism. The following lemma concerns interactions of $h_{\bar\iota}$ with basic sets. We leave its verification to the reader. 

\begin{lemma}\label{L:intp} 
Let $\bar\iota$ be as in \eqref{E:seq}. If $u\colon D(x\oplus y)\to 2^n$ is an injection, 
then
\[
h_{\bar\iota}\big( \llbracket u\circ \iota^x\rrbracket \times  \llbracket u\circ\iota^y\rrbracket\big)  = \llbracket u\rrbracket.
\]
\end{lemma}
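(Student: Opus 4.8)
The plan is to unwind both sides of the claimed identity by comparing coordinates under the homeomorphism $h_{\bar\iota}$, using the product description of basic sets from \eqref{E:bafalt}. First I would recall that, by \eqref{E:bafalt}, for an injection $u\colon D(x\oplus y)\to 2^n$ we have
\[
\llbracket u\circ \iota^x\rrbracket = \prod_{(k,i)\in D(x)}[u(\iota^x(k,i))]
\quad\hbox{and}\quad
\llbracket u\circ \iota^y\rrbracket = \prod_{(k,i)\in D(y)}[u(\iota^y(k,i))],
\]
so the left-hand side is the set of pairs $(\alpha,\beta)\in C_x\times C_y$ with $\pi_{k,i}(\alpha)\res n = u(\iota^x(k,i))$ for all $(k,i)\in D(x)$ and $\pi_{k,i}(\beta)\res n = u(\iota^y(k,i))$ for all $(k,i)\in D(y)$.

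Next I would apply $h_{\bar\iota}$ and use its defining formulas in \eqref{E:hom}. Writing $\gamma = h_{\bar\iota}(\alpha,\beta)$, every coordinate $(k,j)\in D(x\oplus y)$ equals exactly one of $\iota^x(k,i)$ or $\iota^y(k,i)$, and in the two cases $\pi_{k,j}(\gamma)$ equals $\pi_{k,i}(\alpha)$ or $\pi_{k,i}(\beta)$ respectively. Therefore the condition $\pi_{k,i}(\alpha)\res n = u(\iota^x(k,i))$ translates precisely into $\pi_{\iota^x(k,i)}(\gamma)\res n = u(\iota^x(k,i))$, and symmetrically on the $y$ side. Since $D(x\oplus y)$ is the disjoint union of $\iota^x(D(x))$ and $\iota^y(D(y))$, as each $(k,j)$ ranges over $D(x\oplus y)$ these two families of conditions together say exactly that $\pi_{k,j}(\gamma)\res n = u(k,j)$ for all $(k,j)\in D(x\oplus y)$, which is the defining condition of $\llbracket u\rrbracket$ in \eqref{E:bafo}.

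Thus $h_{\bar\iota}$ carries the product $\llbracket u\circ\iota^x\rrbracket\times\llbracket u\circ\iota^y\rrbracket$ onto $\llbracket u\rrbracket$, and since $h_{\bar\iota}$ is a bijection the image is exactly $\llbracket u\rrbracket$, giving the claimed equality. The only genuinely mild point to check is that the two restriction conditions coming from $\alpha$ and $\beta$ do not interact or overlap; this is guaranteed precisely because $\iota^x(D(x))$ and $\iota^y(D(y))$ are disjoint with union $D(x\oplus y)$, so there is no coordinate of $\gamma$ that is left unconstrained or doubly constrained. I expect no real obstacle here: the statement is essentially a bookkeeping verification that the coordinatewise definition of $h_{\bar\iota}$ respects the product factorization of basic sets, which is exactly why the authors leave it to the reader.
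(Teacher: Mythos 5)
Your proof is correct: the coordinatewise unwinding via \eqref{E:bafalt} and the defining formulas of $h_{\bar\iota}$, together with the fact that $D(x\oplus y)$ is the disjoint union of $\iota^x\big(D(x)\big)$ and $\iota^y\big(D(y)\big)$, is exactly the routine verification the paper has in mind when it leaves the lemma to the reader. Nothing is missing; in particular you correctly note that the coordinate conditions translate as biconditionals, so bijectivity of $h_{\bar\iota}$ yields equality rather than mere inclusion.
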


For $\ell \in {\mathbb Z}^\times$, we define 
\begin{equation}\label{E:hom2}
e_{x,\ell}\colon C_x\to  C_{\ell x}, \; e_{x,\ell}(\alpha)=\gamma, 
\end{equation}
where for $(k,i) \in  D(\ell x)$, we let $\gamma(k, i) = \alpha(k/\ell, i)$. We note that $e_{x,\ell}$ is a homeomorphism.

\subsection{Marginally compatible and compatible measures on $C_x$}\label{Su:compm} 

Let $\mu$ be a finite Borel measure on $C_x$. We say that $\mu$ is 
 {\bf marginally compatible with} $x\in {\mathbb N}[{\mathbb Z}^\times]$ if 
the marginal measures $(\pi_{k,i})_*(\mu)$ of $\mu$ on $2^{\mathbb N}$, for $(k,i) \in D(x)$,
are absolutely continuous with respect to $\lambda$. 
We say that $\mu$ is {\bf compatible with} $x\in {\mathbb N}[{\mathbb Z}^\times]$ if 
\begin{enumerate}
\item[(a)] $\mu$ is marginally compatible with $x$; 

\item[(b)]  $\mu$ is invariant under good homeomorphisms of $C_x$; 

\item[(c)] all diagonals of $C_x$ have measure zero with respect to $\mu$, that is, $\mu$ concentrates on $C^0_x$. 
\end{enumerate}
The condition of marginal compatibility, that is, (a), is needed for representations as in \eqref{E:pire} to be well defined, 
while conditions (b) and (c) ensure uniqueness in Theorem~\ref{T:rep}. 

For each $x \in {\mathbb N}[{\mathbb Z}^\times]$, let ${\mathcal M}_x$ be the set of measures on $C_x$ compatible with
$x$, and let 
\[
{\mathcal M} = \bigcup_{x\in {\mathbb N}[{\mathbb Z}^\times]} {\mathcal M}_x.
\]

For $\mu, \nu\in {\mathcal M}$, with $\mu$ compatible with $x$ and $\nu$ compatible with $y$, we define 
\[
\mu\otimes \nu = 
\sum_{\bar\iota} (h_{\bar\iota})_*(\mu\times \nu), 
\]
where $h_{\bar\iota}$ are the homeomorphisms defined in \eqref{E:hom} and ${\bar\iota}$ ranges over pairs as in \eqref{E:seq}. 
\begin{lemma}\label{L:compa} 
Let $\mu$ and $\nu$ be measures compatible with $x$ and $y$, respectively. 
\begin{enumerate}
\item[(i)] For each ${\bar\iota}$ as in \eqref{E:seq}, the measure $(h_{\bar\iota})_*(\mu\times \nu)$ fulfills (a) and (c) 
from the definition of compatibility with $x\oplus y$. 

\item[(ii)] The measure $\mu\otimes \nu$ is compatible with $x\oplus y$. 
\end{enumerate}
\end{lemma}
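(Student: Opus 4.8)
The plan is to prove the two parts of Lemma~\ref{L:compa} in order, since part (ii) builds on part (i) together with an averaging argument over good homeomorphisms. For part (i), fix one pair $\bar\iota = (\iota^x, \iota^y)$ as in \eqref{E:seq} and consider the pushforward $(h_{\bar\iota})_*(\mu\times\nu)$ on $C_{x\oplus y}$. I would verify conditions (a) and (c) of compatibility by tracking how $h_{\bar\iota}$ interacts with the projection maps. The key observation is that, by the construction of $h_{\bar\iota}$ in \eqref{E:hom}, each projection $\pi_{k,j}$ on $C_{x\oplus y}$ pulls back along $h_{\bar\iota}$ to either a projection $\pi_{k,i}$ on $C_x$ (when $(k,j) = \iota^x(k,i)$) or a projection on $C_y$. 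Hence the marginal $(\pi_{k,j})_*\big((h_{\bar\iota})_*(\mu\times\nu)\big)$ equals one of the marginals $(\pi_{k,i})_*(\mu)$ or $(\pi_{k,i})_*(\nu)$, each of which is absolutely continuous with respect to $\lambda$ by the marginal compatibility of $\mu$ and $\nu$. This gives (a).

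For condition (c), I need to show every diagonal of $C_{x\oplus y}$ is null for $(h_{\bar\iota})_*(\mu\times\nu)$. A diagonal is determined by two distinct coordinates $(k,j), (k',j') \in D(x\oplus y)$. The pullback of this diagonal under $h_{\bar\iota}$ splits into three cases according to whether both coordinates come from $\iota^x(D(x))$, both from $\iota^y(D(y))$, or one from each. In the first two cases the pullback is a diagonal of $C_x$ (respectively $C_y$) crossed with the full other factor, hence null because $\mu$ (respectively $\nu$) concentrates on $C^0_x$ (respectively $C^0_y$). The mixed case is where one coordinate, say $\pi_{k,i}$ on $C_x$, is matched against $\pi_{k',i'}$ on $C_y$; the pullback diagonal becomes $\{(\alpha,\beta) : \pi_{k,i}(\alpha) = \pi_{k',i'}(\beta)\}$, and I would argue this is null for $\mu\times\nu$ using Fubini together with the fact that each marginal is absolutely continuous with respect to $\lambda$, so the $\lambda$-null ``single fibre'' of a point in $2^{\mathbb N}$ forces the product to vanish. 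This mixed case is the main obstacle, since it is the only place where one genuinely uses marginal compatibility rather than just diagonal-avoidance of the factors.

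For part (ii), I would first note that $\mu\otimes\nu = \sum_{\bar\iota}(h_{\bar\iota})_*(\mu\times\nu)$ is a finite sum of measures each satisfying (a) and (c) by part (i), so $\mu\otimes\nu$ inherits (a) and (c) immediately. The remaining work is condition (b): invariance under good homeomorphisms of $C_{x\oplus y}$. The clean way to see this is that a good homeomorphism $\widetilde\delta$ of $C_{x\oplus y}$ permutes coordinates within each fixed first-index $k$, and such a permutation carries the summand indexed by $\bar\iota$ to the summand indexed by $\delta\circ\bar\iota$ (suitably interpreted), using the good-homeomorphism invariance of $\mu$ and $\nu$ on the factors to handle the part of $\delta$ that stays within $\iota^x(D(x))$ or within $\iota^y(D(y))$. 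Since the sum ranges over \emph{all} admissible $\bar\iota$, the action of $\widetilde\delta$ merely permutes the index set of the sum, leaving the total measure $\mu\otimes\nu$ fixed. I expect the bookkeeping of how good permutations of $D(x\oplus y)$ decompose relative to the splitting $D(x\oplus y) = \iota^x(D(x)) \sqcup \iota^y(D(y))$ to require care, but no deep new idea beyond the interplay already recorded in Lemmas~\ref{L:basper} and \ref{L:intp}.
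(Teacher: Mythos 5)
Your proposal is correct and follows essentially the same route as the paper: condition (a) via identifying the marginals of the pushforward with marginals of $\mu$ or $\nu$, and condition (c) via the case split on how the two coordinates of a diagonal sit relative to $\iota^x(D(x))$ and $\iota^y(D(y))$, with the mixed case handled exactly as in the paper by Fubini plus the fact that the marginals, being absolutely continuous with respect to the atomless $\lambda$, have no atoms. For part (ii) you actually supply more detail than the paper (which just says it follows immediately from (i)): your observation that a good homeomorphism $\widetilde\delta$ of $C_{x\oplus y}$ sends the summand indexed by $\bar\iota$ to the one indexed by $\delta\circ\bar\iota$, hence permutes the terms of the sum defining $\mu\otimes\nu$, is the intended justification of condition (b).
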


\begin{proof} Point (ii) follows immediately from (i). In (i), checking (a) is straightforward since, for $(k,j)\in D(x\oplus y)$, 
\[
(\pi_{k,j})_*\big( (h_{\bar\iota})_*(\mu\times \nu)\big)  = (\pi_{k,i})_*(\mu)\;\hbox{ or }\;
(\pi_{k,j})_*\big( (h_{\bar\iota})_*(\mu\times \nu)\big)  = (\pi_{k,i})_*(\nu),
\]
for an appropriate $(k,i)\in D(x)$ or $(k,i)\in D(y)$. 

It remains to see (c). 
Let $(k,j), (k',j')\in D(x\oplus y)$ be distinct, and consider the diagonal 
\[
\Delta = \{ \gamma\in C_{x\oplus y} \colon \pi_{k,j}(\gamma)= \pi_{k',j'}(\gamma)\}.
\]
We need to check that 
\begin{equation}\label{E:chdi}
(\mu\times \nu)\big(h_{\bar \iota}^{-1}(\Delta)\big) =0.
\end{equation} 
Let ${\bar \iota} = (\iota^x, \iota^y)$. Because of symmetry, we only need to consider two cases 
\[
\iota^x(k,i) = (k,j),\; \iota^x(k',i')= (k',j'), \hbox{ for some } (k,i), (k',i') \in D(x),
\]
and 
\[
\iota^x(k,i) = (k,j),\; \iota^y(k',i')= (k',j'), \hbox{ for some } (k,i)\in D(x)\hbox{ and } (k',i')\in D(y). 
\]
Note that in either case $(k, i)$ and $(k',i')$ are distinct from each other since $(k,j)$ and $(k',j')$ are distinct. 
Now, in the first case, we have 
\[
h_{\bar \iota}^{-1}(\Delta) = \{ \alpha\in C_x\mid \pi_{k,i}(\alpha) = \pi_{k',i'}(\alpha)\} \times C_y, 
\]
and \eqref{E:chdi} follows from $\mu$ being compatible with $x$, as this property implies 
\[
\mu\big(\{  \alpha\in C_x\mid \pi_{k,i}(\alpha) = \pi_{k',i'}(\alpha)\}\big)=0. 
\]
In the second case, we have 
\[
h_{\bar \iota}^{-1}(\Delta) = \{ (\alpha, \beta) \in C_x\times C_y\mid \pi_{k,i}(\alpha) = \pi_{k',i'}(\beta)\}. 
\]
Fubini's theorem implies that to prove \eqref{E:chdi} for the set above, it suffices to see that, for each $\beta_0\in 2^{\mathbb N}$, 
\[
\mu \big(\{ \alpha\in C_x\mid \pi_{k,i}(\alpha) = \beta_0\}\big) =0.
\]
But otherwise, we would have 
\[
\big((\pi_{k,i})_*(\mu)\big)\big(\{ \beta_0\}\big)>0, 
\]
contradicting absolute continuity of the marginal measure $(\pi_{k,i})_*(\mu)$ with respect to $\lambda$ in light of 
$\lambda$ not having atoms. 
\end{proof}

By inspecting the definitions, we see the following lemma. 

\begin{lemma}\label{L:abseio} 
\begin{enumerate}
\item[(i)] ${\mathcal M}$ with the operation $\otimes$ is an abelian semigroup. 

\item[(ii)] Let $\mu, \mu'\in {\mathcal M}$ be compatible with $x$, and let $\nu, \nu'\in {\mathcal M}$ be compatible with $y$. 
If $\mu \preceq \mu'$ and $\nu\preceq \nu'$, then $\mu\otimes \nu\preceq \mu'\otimes \nu'$. 
\end{enumerate} 
\end{lemma}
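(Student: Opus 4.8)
The plan is to handle the three semigroup requirements — that $\otimes$ maps $\mathcal M$ into $\mathcal M$, that it is commutative, and that it is associative — together with the monotonicity statement (ii), treating (ii) first since it is the softest and is moreover exactly what makes the semigroup structure meaningful. Closure is immediate from Lemma~\ref{L:compa}(ii): if $\mu,\nu\in\mathcal M$ then $\mu\otimes\nu\in\mathcal M$. For (ii) I would chain three routine preservation properties of the absolute-continuity relation $\preceq$: first, $\mu\preceq\mu'$ and $\nu\preceq\nu'$ give $\mu\times\nu\preceq\mu'\times\nu'$ (the product of the Radon--Nikodym derivatives serves as a density); second, pushing forward along the homeomorphism $h_{\bar\iota}$ of \eqref{E:hom} preserves $\preceq$, so $(h_{\bar\iota})_*(\mu\times\nu)\preceq(h_{\bar\iota})_*(\mu'\times\nu')$ for each $\bar\iota$; and third, for finite families $\sum_i a_i\preceq\sum_i b_i$ whenever $a_i\preceq b_i$ for all $i$ (a null set for $\sum_i b_i$ is null for each $b_i$, hence for each $a_i$). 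Summing over the common index set of pairs $\bar\iota$ from \eqref{E:seq} then yields $\mu\otimes\nu\preceq\mu'\otimes\nu'$.

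For commutativity I would exploit that $\oplus$ is symmetric, so $C_{x\oplus y}=C_{y\oplus x}$, and pair each $\bar\iota=(\iota^x,\iota^y)$ admissible for $(x,y)$ with $\bar\iota'=(\iota^y,\iota^x)$ admissible for $(y,x)$. Writing $\sigma\colon C_x\times C_y\to C_y\times C_x$ for the coordinate flip, a direct reading of the defining formula \eqref{E:hom} gives $h_{\bar\iota'}\circ\sigma=h_{\bar\iota}$, and since $\sigma_*(\mu\times\nu)=\nu\times\mu$ the identity $(h_{\bar\iota})_*(\mu\times\nu)=(h_{\bar\iota'})_*(\nu\times\mu)$ holds term by term. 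As $\bar\iota\mapsto\bar\iota'$ is a bijection of index sets, summation gives $\mu\otimes\nu=\nu\otimes\mu$, an exact equality of measures.

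Associativity is where the real content lies, and it is the step I expect to be the main obstacle. With $\mu,\nu,\rho$ compatible with $x,y,z$ and $\oplus$ associative, both $(\mu\otimes\nu)\otimes\rho$ and $\mu\otimes(\nu\otimes\rho)$ are measures on $C_{x\oplus y\oplus z}$. Expanding the definitions twice, distributing products over the finite sums, and invoking the composition identity $h_{\bar\kappa}\circ(h_{\bar\iota}\times\mathrm{id})=h_{\bar\tau}$ — where $\bar\tau=(\kappa^{x\oplus y}\circ\iota^x,\kappa^{x\oplus y}\circ\iota^y,\kappa^z)$ is the induced triple of first-coordinate-preserving injections with disjoint images covering $D(x\oplus y\oplus z)$ — I would write each iterated product as a sum of pushforwards $(h_{\bar\tau})_*(\mu\times\nu\times\rho)$ indexed by such triple embeddings $\bar\tau$. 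Both expansions range over exactly the same family of triple embeddings, so they are positive integer multiples of the single measure $S=\sum_{\bar\tau}(h_{\bar\tau})_*(\mu\times\nu\times\rho)$.

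The delicate point — and the crux of the whole lemma — is the multiplicity bookkeeping. The correspondence $(\bar\kappa,\bar\iota)\mapsto\bar\tau$ is surjective onto triple embeddings but $\prod_k(x\oplus y)(k)!$-to-one, its fibre being the choice of the intermediate injection $\kappa^{x\oplus y}$ onto the union of the images of $\tau^x$ and $\tau^y$; the dual correspondence on the other side is $\prod_k(y\oplus z)(k)!$-to-one. Thus $(\mu\otimes\nu)\otimes\rho$ and $\mu\otimes(\nu\otimes\rho)$ are the scalar multiples $\big(\prod_k(x\oplus y)(k)!\big)S$ and $\big(\prod_k(y\oplus z)(k)!\big)S$ of the same measure, hence mutually absolutely continuous. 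This is associativity in the sense in which $\mathcal M$ is subsequently used, namely up to the relation $\preceq$ controlled by part (ii); an honest equality of measures is recovered by restricting all defining sums to coordinate-order-preserving embeddings, for which $(\bar\kappa,\bar\iota)\mapsto\bar\tau$ becomes a bijection, the good-homeomorphism invariance of the compatible measures (condition (b)) guaranteeing that this restricted operation is a positive scalar multiple of the one in the text. I expect verifying the surjectivity of $(\bar\kappa,\bar\iota)\mapsto\bar\tau$ and pinning down these fibre sizes to be the one genuinely nonroutine part of the argument.
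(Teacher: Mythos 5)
Your proposal is correct; there is essentially nothing in the paper to compare it against, since the paper disposes of this lemma with the single sentence ``By inspecting the definitions, we see the following lemma'' and records no argument. Your verification of closure via Lemma~\ref{L:compa}\,(ii), of part (ii) via the three preservation properties of $\preceq$ (products of measures, pushforwards, finite sums over the common index set of pairs $\bar\iota$), and of commutativity via the coordinate flip $\sigma$ and the identity $h_{(\iota^y,\iota^x)}\circ\sigma=h_{(\iota^x,\iota^y)}$ is exactly the routine inspection intended there, and every step is sound; commutativity does hold as an equality of measures.

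Your associativity computation is the substantive contribution, and your multiplicity counts are right: the fibres of $(\bar\kappa,\bar\iota)\mapsto\bar\tau$ have constant size $\prod_k(x\oplus y)(k)!$, and dually $\prod_k(y\oplus z)(k)!$ on the other side. You could state the upshot more bluntly than you do: since these scalars differ in general, the operation $\otimes$ as literally defined is \emph{not} associative on $\mathcal M$. Concretely, if $x,y,z$ all have domain $\{1\}$ with $x(1)=2$ and $y(1)=z(1)=1$, then $(\mu\otimes\nu)\otimes\rho$ and $\mu\otimes(\nu\otimes\rho)$ have total masses $144$ and $48$ times $\mu(C_x)\,\nu(C_y)\,\rho(C_z)$, respectively. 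So part (i), read for measures, fails as stated; what your argument actually proves --- and what the paper needs --- is that the two associations are positive scalar multiples of the common measure $S$, hence mutually absolutely continuous, so that $\otimes$ descends to a genuine abelian semigroup operation on measure classes, on which it is well defined precisely by part (ii). That is also the only sense in which the lemma is ever used: every later occurrence of $\otimes$ --- condition (ii) of Theorem~\ref{T:dlco}, the conclusion of Theorem~\ref{T:notL}, and \eqref{E:rtt}, \eqref{E:sst} in the proof of Theorem~\ref{T:main} --- is an assertion about $\preceq$ or $\perp$, both of which are insensitive to positive scalar multiples and to replacing measures by equivalent ones. Your order-preserving normalization is a clean way to get an honestly associative representative (and you are right that condition (b), invariance under good homeomorphisms, is what makes the restricted sum a scalar multiple of the full one); an even simpler fix is to divide the defining sum by the number of pairs $\bar\iota$, which equals $\prod_k(x\oplus y)(k)!$, after which both iterated products equal $S$ divided by $\prod_k(x\oplus y\oplus z)(k)!$ exactly.
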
 

Again ${\mathbb Z}^\times$ acts on ${\mathcal M}$ as follows. 
For $\mu\in {\mathcal M}$ compatible with $x$, let 
\[
\ell \mu = (e_{x,\ell})_*(\mu),
\]
where $e_{x,\ell}$ is the homeomorphism given by \eqref{E:hom2}. The following lemma is easy to check. 
\begin{lemma}\label{L:compa2} 
The measure $\ell\mu$ is compatible with $\ell x$. 
\end{lemma}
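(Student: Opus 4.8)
The plan is to recognize the homeomorphism $e_{x,\ell}$ of \eqref{E:hom2} as a mere relabeling of coordinate indices, so that each of the three clauses (a), (b), (c) in the definition of compatibility transfers directly from $\mu$ on $C_x$ to $\ell\mu=(e_{x,\ell})_*(\mu)$ on $C_{\ell x}$. Concretely, since ${\rm dom}(\ell x)=\{\ell m\mid m\in {\rm dom}(x)\}$ and $(\ell x)(\ell m)=x(m)$, the assignment $(m,i)\mapsto (\ell m,i)$ is a bijection $\Phi\colon D(x)\to D(\ell x)$, and unwinding the definition of $e_{x,\ell}$ gives the single identity on which everything rests:
\[
\pi_{\ell m,\,i}\circ e_{x,\ell}=\pi_{m,i}\quad\text{for all }(m,i)\in D(x).
\]
I would then check (a), (c), and (b) in turn.

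For (a), given $(k,i)\in D(\ell x)$ write $k=\ell m$; the displayed identity yields $(\pi_{k,i})_*(\ell\mu)=(\pi_{m,i})_*(\mu)$, which is absolutely continuous with respect to $\lambda$ because $\mu$ is marginally compatible with $x$. For (c), since $\Phi$ is a bijection, distinct $(k,i),(k',i')\in D(\ell x)$ arise from distinct elements of $D(x)$, and the same identity shows that $e_{x,\ell}$ carries the corresponding diagonal of $C_x$ onto the diagonal $\{\gamma\mid \pi_{k,i}(\gamma)=\pi_{k',i'}(\gamma)\}$ of $C_{\ell x}$; hence this diagonal is $\ell\mu$-null because $\mu$ concentrates on $C^0_x$.

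The only clause carrying any content is (b). The key observation is that a permutation $\delta$ of $D(\ell x)$ is good exactly when it preserves the first coordinate, and since $\Phi$ sends the first coordinate $m$ to $\ell m$ with $m\mapsto\ell m$ injective, conjugation by $\Phi$ is a bijection between good permutations of $D(x)$ and good permutations of $D(\ell x)$. Writing $\delta'=\Phi^{-1}\circ\delta\circ\Phi$ for the good permutation of $D(x)$ corresponding to a good permutation $\delta$ of $D(\ell x)$, a direct computation from \eqref{E:dego} together with the displayed projection identity establishes the conjugacy
\[
\widetilde\delta\circ e_{x,\ell}=e_{x,\ell}\circ\widetilde{\delta'}.
\]
Pushing $\mu$ forward then gives $(\widetilde\delta)_*(\ell\mu)=(e_{x,\ell})_*\big((\widetilde{\delta'})_*(\mu)\big)=(e_{x,\ell})_*(\mu)=\ell\mu$, where the middle step uses invariance of $\mu$ under the good homeomorphism $\widetilde{\delta'}$ of $C_x$. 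Verifying this conjugacy identity by chasing both sides through \eqref{E:dego} is the main (and essentially the only) obstacle; the remaining clauses are pure bookkeeping once the projection identity is in hand.
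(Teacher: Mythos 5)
Your proof is correct. The paper offers no argument here---it states that the lemma "is easy to check"---and your verification (the projection identity $\pi_{\ell m,i}\circ e_{x,\ell}=\pi_{m,i}$, the bijection $\Phi$ between index sets, and the conjugacy $\widetilde\delta\circ e_{x,\ell}=e_{x,\ell}\circ\widetilde{\delta'}$ handling clause (b)) is precisely the intended direct inspection, carried out in full.
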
 

The next lemma is also checked by a quick inspection. 
\begin{lemma}\label{L:abse} 
\begin{enumerate}
\item[(i)] For $\mu, \nu\in {\mathcal M}$ and $\ell, \ell_1, \ell_2\in {\mathbb Z}^\times$,  
we have $\ell (\mu\otimes \nu) = \ell \mu\otimes \ell \nu$ and $\ell_2(\ell_1 \mu) = (\ell_2\ell_1) \mu$. 

\item[(ii)] Let $\mu, \mu'\in {\mathcal M}$ be compatible with $x$ and let $\ell\in {\mathbb Z}^\times$. 
If $\mu \preceq \mu'$, then $\ell \mu \preceq \ell \mu'$. 
\end{enumerate} 
\end{lemma}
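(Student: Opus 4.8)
The plan is to exploit that every operation appearing in the statement is a pushforward along one of the explicit homeomorphisms $e_{x,\ell}$ and $h_{\bar\iota}$ introduced in \eqref{E:hom2} and \eqref{E:hom}. Consequently, each identity in (i) reduces to an identity between compositions of such homeomorphisms, verifiable by reading off the action on the coordinates $\pi_{k,i}$, while (ii) reduces to the elementary fact that a Borel isomorphism preserves absolute continuity.

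First I would dispatch the semigroup identity $\ell_2(\ell_1\mu) = (\ell_2\ell_1)\mu$. Unwinding the definition, the left side equals $(e_{\ell_1 x,\ell_2})_*\big((e_{x,\ell_1})_*\mu\big) = (e_{\ell_1 x,\ell_2}\circ e_{x,\ell_1})_*\mu$, while the right side is $(e_{x,\ell_2\ell_1})_*\mu$. So it suffices to check the identity of homeomorphisms $e_{\ell_1 x,\ell_2}\circ e_{x,\ell_1} = e_{x,\ell_2\ell_1}$ from $C_x$ to $C_{(\ell_2\ell_1)x}$. This is immediate from \eqref{E:hom2}: applying the coordinate formula twice gives, for $(k,i)\in D((\ell_2\ell_1)x)$, the value $\alpha(k/(\ell_2\ell_1),\, i)$ on both sides, using $\ell_2(\ell_1 x) = (\ell_2\ell_1)x$ from Section~\ref{Su:semig}.

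Next, for the distributivity identity $\ell(\mu\otimes\nu) = \ell\mu\otimes\ell\nu$, I would expand both sides as finite sums of pushforwards. On the left, linearity of pushforward gives $\ell(\mu\otimes\nu) = \sum_{\bar\iota}(e_{x\oplus y,\ell}\circ h_{\bar\iota})_*(\mu\times\nu)$, where $\bar\iota$ ranges over the pairs of \eqref{E:seq} for $x,y$. On the right, using $\ell x\oplus\ell y = \ell(x\oplus y)$ from Section~\ref{Su:semig} together with $\ell\mu\times\ell\nu = (e_{x,\ell}\times e_{y,\ell})_*(\mu\times\nu)$, I get $\ell\mu\otimes\ell\nu = \sum_{\bar\jmath}\big(h_{\bar\jmath}\circ(e_{x,\ell}\times e_{y,\ell})\big)_*(\mu\times\nu)$, with $\bar\jmath$ ranging over the analogous pairs for $\ell x,\ell y$. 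The two sums match once I produce a bijection $\bar\iota\mapsto\bar\jmath$ between the index families and verify the commuting square
\[
e_{x\oplus y,\ell}\circ h_{\bar\iota} = h_{\bar\jmath}\circ(e_{x,\ell}\times e_{y,\ell}).
\]
The bijection is the obvious one: since $D(\ell x) = \{(\ell k,i)\mid (k,i)\in D(x)\}$, conjugating each injection $\iota^x,\iota^y$ by the first-coordinate multiplication-by-$\ell$ maps turns a pair $\bar\iota$ into a first-coordinate-preserving pair $\bar\jmath$ with disjoint images covering $D(\ell x\oplus\ell y)$, and this assignment is reversible. Verifying the displayed square is then pure coordinate bookkeeping through \eqref{E:hom2} and \eqref{E:hom}: both composites send $(\alpha,\beta)$ to the point of $C_{\ell(x\oplus y)}$ whose $(\ell k,j)$-coordinate equals the $(k,j)$-coordinate of $h_{\bar\iota}(\alpha,\beta)$. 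This commuting-square check is the only place demanding care, and I expect it to be entirely routine.

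Finally, for (ii), I would note that $e_{x,\ell}$ is a homeomorphism, hence a Borel isomorphism, and that pushforward along such a map preserves $\preceq$: if $(\ell\mu')(B) = \mu'\big(e_{x,\ell}^{-1}(B)\big) = 0$ for a Borel $B\subseteq C_{\ell x}$, then $\mu\preceq\mu'$ forces $\mu\big(e_{x,\ell}^{-1}(B)\big) = 0$, that is, $(\ell\mu)(B)=0$; thus $\ell\mu\preceq\ell\mu'$. Combined with Lemma~\ref{L:compa2}, which keeps all the measures in ${\mathcal M}$, this completes the argument.
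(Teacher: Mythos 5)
Your proposal is correct and matches the paper's intent: the paper offers no written proof (Lemma~\ref{L:abse} is "checked by a quick inspection"), and your argument is precisely that inspection carried out in full — the composition identity $e_{\ell_1 x,\ell_2}\circ e_{x,\ell_1}=e_{x,\ell_2\ell_1}$, the term-by-term matching of the two sums of pushforwards via the commuting square $e_{x\oplus y,\ell}\circ h_{\bar\iota}=h_{\bar\jmath}\circ(e_{x,\ell}\times e_{y,\ell})$, and the preservation of $\preceq$ under pushforward. All three steps check out, so nothing further is needed.
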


\subsection{Certain functions on $C_x$}\label{Su:cefu} 

Let $x\in {\mathbb N}[{\mathbb Z}^\times]$. For $\phi\in L^0(\lambda, {\mathbb T})$, define a function $R_x(\phi)$ on $C_x$ 
with values in $\mathbb T$ by the formula 
\begin{equation}\label{E:gogo}
R_x(\phi)= \prod_{(k,i)\in D(x)} (\phi\circ \pi_{k, i})^k. 
\end{equation}
Note that since $\phi$ is a measure class of functions with respect to $\lambda$, $R_x(\phi)$ is defined  
only up to a set of the form 
\begin{equation}\notag
\bigcup_{(k,i)\in D(x)} \pi_{k,i}^{-1}(A_{k,i}), 
\end{equation} 
where $A_{k,i}$ is a subset of $2^{\mathbb N}$ with $\lambda( A_{k,i})=0$.  
In particular, for each measure $\mu$ marginally compatible with $x$, $R_x(\phi)$ determines a measure class of functions 
with respect to $\mu$. 
Functions $R_x(\phi)$ will be crucial in defining unitary representations of $L^0(\lambda, {\mathbb T})$ in 
Hilbert spaces $L^2(\mu)$ for such $\mu$. 

We prove now three lemmas establishing some properties of the functions $R_x(\phi)$. 
The first of these lemmas will be important in computations. 

\begin{lemma}\label{L:pfco} 
Let $u\colon D(x)\to 2^n$ be an injection, for $x\in {\mathbb N}[{\mathbb Z}^\times]$ and $n\in {\mathbb N}$. 
Fix $z_s\in {\mathbb T}$, for $s\in 2^n$, and consider 
\[
\phi = \sum_{s\in 2^n} z_s\chi_{[s]} \in {\mathbb S}_n,
\]
where $\chi_{[s]}$ is the indicator function of the set $[s]\subseteq 2^{\mathbb N}$. 
For this $\phi$, the function $R_x(\phi)$ 
is constant on $\llbracket u\rrbracket$, and its constant value is 
\[
\prod_{(k,i)\in D(x)} z_{u(k,i)}^k. 
\]
\end{lemma}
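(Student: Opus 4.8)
The plan is to reduce the whole statement to a pointwise evaluation of the product \eqref{E:gogo} at an arbitrary point of $\llbracket u\rrbracket$. So I would fix $\alpha\in\llbracket u\rrbracket$ and compute each factor $(\phi\circ\pi_{k,i})^k$ separately, aiming to show that its value depends only on $u(k,i)$ and $k$, and not on $\alpha$; multiplying these values together will then yield simultaneously the constancy of $R_x(\phi)$ on $\llbracket u\rrbracket$ and the asserted constant value.

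The key step is to identify the value of $\phi\circ\pi_{k,i}$ at a point of $\llbracket u\rrbracket$. By the definition \eqref{E:bafo} of $\llbracket u\rrbracket$, for $\alpha\in\llbracket u\rrbracket$ and $(k,i)\in D(x)$ we have $\pi_{k,i}(\alpha)\res n = u(k,i)$, which by \eqref{E:baca} says exactly that $\pi_{k,i}(\alpha)\in[u(k,i)]$. The only thing to observe is that the sets $[s]$, $s\in 2^n$, partition $2^{\mathbb N}$, since every $\beta\in 2^{\mathbb N}$ lies in $[\beta\res n]$ and in no other $[s]$. Hence $\chi_{[s]}\big(\pi_{k,i}(\alpha)\big)=1$ precisely when $s=u(k,i)$ and $=0$ otherwise, so that
\[
\phi\big(\pi_{k,i}(\alpha)\big)=\sum_{s\in 2^n} z_s\,\chi_{[s]}\big(\pi_{k,i}(\alpha)\big)=z_{u(k,i)}.
\]
Substituting this into \eqref{E:gogo} gives, for every $\alpha\in\llbracket u\rrbracket$,
\[
R_x(\phi)(\alpha)=\prod_{(k,i)\in D(x)}\big(\phi(\pi_{k,i}(\alpha))\big)^k=\prod_{(k,i)\in D(x)} z_{u(k,i)}^k,
\]
and as the right-hand side is free of $\alpha$, this is exactly the desired conclusion. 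I would note in passing that injectivity of $u$ plays no role in this computation (it matters only for $\llbracket u\rrbracket$ to avoid the diagonals of $C_x$); the formula holds for any $u\colon D(x)\to 2^n$.

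I do not expect any genuine obstacle, as the statement is a direct unwinding of the definitions of $R_x$, of the basic set $\llbracket u\rrbracket$, and of a step function in $\mathbb S_n$. The one point that deserves a word of care is the well-definedness issue flagged after \eqref{E:gogo}: $R_x(\phi)$ is a priori only determined modulo sets $\pi_{k,i}^{-1}(A_{k,i})$ with $\lambda(A_{k,i})=0$. However, since $\phi\in\mathbb S_n$ is given by the genuine, everywhere-defined representative $\sum_{s\in 2^n} z_s\chi_{[s]}$ rather than as a bare $\lambda$-class, the computation above is valid literally at every point of $\llbracket u\rrbracket$, so no null-set bookkeeping is required.
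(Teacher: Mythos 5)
Your proof is correct and is essentially the paper's own argument: the paper likewise evaluates $\phi(\pi_{k,i}(\alpha))=z_{u(k,i)}$ for $\alpha\in\llbracket u\rrbracket$ directly from \eqref{E:bafo} and concludes by substituting into \eqref{E:gogo}. Your extra remarks (that injectivity of $u$ is not needed for the computation, and that no null-set bookkeeping is required since $\phi$ has a canonical everywhere-defined representative) are accurate but not part of the paper's proof.
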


\begin{proof} 
By \eqref{E:bafo}, for $\alpha\in \llbracket u\rrbracket$, we have 
\[
\phi(\pi_{k, i}(\alpha)) = z_{u(k,i)},
\]
and the conclusion follows.
\end{proof} 

The next two lemmas describe interactions of the functions in \eqref{E:gogo} with homeomorphisms defined before. 

\begin{lemma}\label{L:sym} 
For each $\phi\in L^0(\lambda, {\mathbb T})$, $R_x(\phi)$ is invariant under all good homeomorphisms of $C_x$. 
\end{lemma}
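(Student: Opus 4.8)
The plan is to verify the invariance by a direct computation, substituting the definition of a good homeomorphism $\widetilde\delta$ into the product defining $R_x(\phi)$ and then reindexing. I would first fix a representative of $\phi$, so that $R_x(\phi)$ is a genuine function on $C_x$ given by \eqref{E:gogo}; showing $R_x(\phi)\circ \widetilde\delta = R_x(\phi)$ as functions then yields the claim for the associated measure class as well.

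First I would unwind how $\widetilde\delta$ acts on each projection. From the defining relation \eqref{E:dego}, namely $\pi_{\delta(k,i)}\big(\widetilde\delta(\alpha)\big) = \pi_{k,i}(\alpha)$, setting $(k,i)=\delta^{-1}(k',i')$ gives $\pi_{k',i'}\big(\widetilde\delta(\alpha)\big) = \pi_{\delta^{-1}(k',i')}(\alpha)$ for every $(k',i')\in D(x)$. Substituting this into \eqref{E:gogo} produces
\[
R_x(\phi)\big(\widetilde\delta(\alpha)\big) = \prod_{(k,i)\in D(x)} \phi\big(\pi_{\delta^{-1}(k,i)}(\alpha)\big)^k.
\]

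The crucial step is the reindexing $(k',i') = \delta^{-1}(k,i)$. Since $\delta$ is a bijection of $D(x)$, as $(k,i)$ runs over $D(x)$ so does $(k',i')$. The point where goodness enters is that $\delta$ preserves the first coordinate: $\delta(k',i')=(k',j)$ for some $j$, so the $(k,i)$ with $\delta^{-1}(k,i)=(k',i')$ has first coordinate $k=k'$. Hence the exponent $k$ in the displayed product equals $k'$, the first coordinate of the reindexing variable, and the product becomes $\prod_{(k',i')\in D(x)} \phi\big(\pi_{k',i'}(\alpha)\big)^{k'} = R_x(\phi)(\alpha)$.

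The only subtlety—and really the heart of the lemma—is precisely this matching of exponents: the exponent attached to coordinate $(k,i)$ in $R_x(\phi)$ depends only on the first coordinate $k$, and goodness guarantees that $\widetilde\delta$ permutes coordinates only within each fixed first coordinate, so the exponents are left undisturbed by the permutation. I do not expect a real obstacle here; the computation is routine once the bookkeeping of $\delta$ versus $\delta^{-1}$ is set up correctly, and the measure-class ambiguity noted after \eqref{E:gogo} causes no trouble, since that ambiguity set is a union of coordinate-preimages of $\lambda$-null sets, a form manifestly preserved under coordinate permutations.
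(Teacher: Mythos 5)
Your proof is correct and follows essentially the same route as the paper's: both use \eqref{E:dego} to rewrite $\pi_{k,i}\circ\widetilde\delta$ as $\pi_{\delta^{-1}(k,i)}$ and then reindex by the good permutation, with goodness ensuring the exponents (determined by the first coordinate) are unchanged. The only cosmetic difference is that the paper first groups the factors sharing a fixed first coordinate $k_0$ and pulls out the common exponent before permuting, whereas you perform one global reindexing; your closing remark about the measure-class ambiguity is a correct observation that the paper leaves implicit.
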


\begin{proof} 
Fix a good permutation $\delta$ of $D(x)$ and $\alpha\in C_x$. We show that 
\[
\prod_{(k,i)\in D(x)} (\phi\circ \pi_{k, i})^k\big({\widetilde \delta}(\alpha)\big) = \prod_{(k,i)\in D(x)} (\phi\circ \pi_{k, i})^k\big(\alpha\big). 
\]
It suffices to prove that, given $k_0$ such that $(k_0, i)\in D(x)$ for some $i$, we have 
\begin{equation}\label{E:kuar}
\prod_{(k_0,i)\in D(x)} (\phi\circ \pi_{k_0, i})^{k_0}\big({\widetilde \delta}(\alpha)\big) = 
\prod_{(k_0,i)\in D(x)} (\phi\circ \pi_{{k_0}, i})^{k_0}\big(\alpha\big).
\end{equation} 
Since the right-hand side of \eqref{E:kuar} is equal to 
\[
\big( \prod_{(k_0,i)\in D(x)} \phi\circ \pi_{k_0, i}(\alpha)\big)^{k_0}
\]
while, by \eqref{E:dego}, the left-hand side of \eqref{E:kuar} is equal to 
\[
\Big( \prod_{(k_0,i)\in D(x)} \phi\circ \pi_{k_0, i}\big({\widetilde \delta}(\alpha)\big)\Big)^{k_0}\\
= \big( \prod_{(k_0,i)\in D(x)} \phi\circ \pi_{\delta^{-1}(k_0, i)}(\alpha)\big)^{k_0}, 
\]
it suffices to notice that 
\[
 \prod_{(k_0,i)\in D(x)} \phi\circ \pi_{\delta^{-1}(k_0, i)}(\alpha) =  \prod_{(k_0,i)\in D(x)} \phi\circ \pi_{k_0, i}(\alpha),
\]
which is clear as $\delta^{-1}$ is a good permutation of $D(x)$. 
\end{proof}

\begin{lemma}\label{L:idio}
Let $x,y\in {\mathbb N}[{\mathbb Z}^\times]$ and let $\phi\in L^0(\lambda, {\mathbb T})$. 
\begin{enumerate} 
\item[(i)] For ${\bar\iota}$ as in \eqref{E:seq} and $\alpha\in C_x$, $\beta\in C_y$, 
\[
\big( R_{x\oplus y} (\phi) \circ h_{\bar\iota}\big) (\alpha, \beta) = R_x(\phi)(\alpha)\, R_y(\phi)(\beta).
\]

\item[(ii)] For $\ell\in {\mathbb Z}^\times$, 
\[
R_{\ell x}(\phi) \circ e_{x,\ell} = R_x(\phi)^\ell. 
\]
\end{enumerate} 
\end{lemma}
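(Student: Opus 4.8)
The plan is to prove both parts of Lemma~\ref{L:idio} by direct computation from the definition \eqref{E:gogo}, exploiting the fact that the homeomorphisms $h_{\bar\iota}$ and $e_{x,\ell}$ are defined purely in terms of how they act on projections. Since $R_x(\phi)$ is a product indexed by $D(x)$ of powers of $\phi\circ\pi_{k,i}$, the key observation throughout is that composing with a coordinate-respecting homeomorphism simply relabels (or rescales) the projections appearing in the product.

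\medskip

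For part (i), I would start by writing out $R_{x\oplus y}(\phi)\circ h_{\bar\iota}$ using \eqref{E:gogo} as a product over $(k,j)\in D(x\oplus y)$ of $\big(\phi\circ\pi_{k,j}\circ h_{\bar\iota}\big)^k$. The crucial input is the defining property of $h_{\bar\iota}$ from \eqref{E:hom}: for $(\alpha,\beta)\in C_x\times C_y$ and $(k,j)=\iota^x(k,i)$ we have $\pi_{k,j}(h_{\bar\iota}(\alpha,\beta))=\pi_{k,i}(\alpha)$, and for $(k,j)=\iota^y(k,i)$ we have $\pi_{k,j}(h_{\bar\iota}(\alpha,\beta))=\pi_{k,i}(\beta)$; moreover, the value of the first index $k$ is unchanged by $\iota^x$ and $\iota^y$ (both send $(k,i)$ to $(k,\cdot)$), so the exponent $k$ attached to each factor is preserved. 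Since $D(x\oplus y)$ is the disjoint union of $\iota^x(D(x))$ and $\iota^y(D(y))$, the product over $D(x\oplus y)$ splits as a product over $\iota^x(D(x))$ times a product over $\iota^y(D(y))$. Reindexing the first along the bijection $\iota^x$ and the second along $\iota^y$, and using that $\iota^x,\iota^y$ preserve the first coordinate, turns these into $\prod_{(k,i)\in D(x)}(\phi\circ\pi_{k,i}(\alpha))^k=R_x(\phi)(\alpha)$ and $R_y(\phi)(\beta)$, respectively, which gives the desired factorization.

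\medskip

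For part (ii), I would argue analogously using \eqref{E:hom2}: $R_{\ell x}(\phi)\circ e_{x,\ell}$ is the product over $(k,i)\in D(\ell x)$ of $\big(\phi\circ\pi_{k,i}\circ e_{x,\ell}\big)^k$. By the definition of $e_{x,\ell}$, the coordinate $(k,i)$ of $e_{x,\ell}(\alpha)$ equals the coordinate $(k/\ell,i)$ of $\alpha$, so $\pi_{k,i}\circ e_{x,\ell}=\pi_{k/\ell,i}$. Since $D(\ell x)=\{(\ell m,i)\mid (m,i)\in D(x)\}$, substituting $k=\ell m$ reindexes the product over $(m,i)\in D(x)$, turning the factor $\big(\phi\circ\pi_{m,i}(\alpha)\big)^{\ell m}=\big(\big(\phi\circ\pi_{m,i}(\alpha)\big)^m\big)^\ell$ into the $\ell$-th power of the corresponding factor of $R_x(\phi)(\alpha)$. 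Collecting the exponent $\ell$ out of the whole product yields $R_x(\phi)^\ell$.

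\medskip

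I do not expect any genuine obstacle here; both parts are bookkeeping arguments about reindexing a finite product. The only point requiring mild care is that the identities hold as \emph{measure classes} rather than honest functions, so I would note that both sides are well-defined $\mu$-almost everywhere for measures marginally compatible with the relevant elements of $\mathbb{N}[\mathbb{Z}^*]$ (the exceptional sets being preimages under projections of $\lambda$-null sets, as discussed after \eqref{E:gogo}), and that $h_{\bar\iota}$ and $e_{x,\ell}$ carry such exceptional sets to such exceptional sets. The heart of the matter, in both parts, is matching the exponent $k$ with the correct relabeled projection, which is guaranteed precisely because $\iota^x,\iota^y$ and the scaling $k\mapsto k/\ell$ all preserve the first coordinate of each index.
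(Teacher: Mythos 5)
Your proof is correct and follows essentially the same route as the paper: for (i) the paper likewise splits the product over $D(x\oplus y)$ into the products over $\iota^x(D(x))$ and $\iota^y(D(y))$ and reindexes using the definitional identities $\pi_{\iota^x(k,i)}(h_{\bar\iota}(\alpha,\beta))=\pi_{k,i}(\alpha)$, $\pi_{\iota^y(k,i)}(h_{\bar\iota}(\alpha,\beta))=\pi_{k,i}(\beta)$. Part (ii) is simply declared clear in the paper, and your substitution $k=\ell m$ with the exponent pulled out of the product is exactly the intended computation.
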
 

\begin{proof} Point (ii) is clear. As for point (i), let ${\bar\iota}= (\iota^x, \iota^y)$. 
Using the definitional identities for $h_{\bar\iota}$, 
\[
\pi_{\iota^x(k,i)}(h_{\bar\iota}(\alpha, \beta)) = \pi_{k,i}(\alpha)\;\hbox{ and }\; \pi_{\iota^y(k,i)}(h_{\bar\iota}(\alpha, \beta))= \pi_{k,i}(\beta),
\] 
for $(k,i)\in D(x)$ and $(k,i)\in D(y)$, respectively, 
we get 
\[
\begin{split}
&\big( \prod_{(k,i)\in D(x\oplus y)} (\phi\circ \pi_{k, i})^k\big) (h_{\bar\iota}(\alpha, \beta))\\
&= \big( \prod_{(k,i)\in D(x)} (\phi\circ \pi_{\iota^x(k, i)})^k\big) (h_{\bar\iota}(\alpha, \beta))\, \big( \prod_{(k,i)\in D(y)} (\phi\circ \pi_{\iota^y(k, i)})^k\big) (h_{\bar\iota}(\alpha, \beta))\\
&=\big( \prod_{(k,i)\in D(x)} (\phi\circ \pi_{k, i})^k\big) (\alpha) \,\big(\prod_{(k,i)\in D(y)} (\phi\circ \pi_{k, i})^k\big) (\beta),
\end{split} 
\]
as required. 
\end{proof}

\subsection{Basic representations of $L^0(\lambda, {\mathbb T})$}\label{Su:basic} 

We define here certain unitary representations of $L^0(\lambda, {\mathbb T})$. 
All other representations of $L^0(\lambda, {\mathbb T})$ 
in this paper, except for Koopman representations, are built from the ones defined in this section. A good reason for this situation 
is given in Theorem~\ref{T:rep}. 

Let $x\in {\mathbb N}[{\mathbb Z}^\times]$. 

Let $\mu$ be a measure on $C_x$ that is marginally compatible with $x$. We define a unitary representation 
$\rho_x$ of $L^0(\lambda, {\mathbb T})$ on $L^2(\mu)$ by letting $\rho_x(\phi)$, for $\phi\in L^0(\lambda, {\mathbb T})$,  
be the multiplication operator on $L^2(\mu)$ given by 
\begin{equation}\label{E:pire}
\rho_x(\phi)\big( f\big) = R_x(\phi) f,\; \hbox{ for }f \in L^2(\mu),
\end{equation} 
where on the right-hand side is the product of a function given by \eqref{E:gogo} and $f$. 
The assumption on $\mu$ of being marginally compatible with $x$ ensures that the function on the right hand side is measurable 
with respect to $\mu$. It is now easy to check that $\rho_x(\phi)$ in \eqref{E:pire} is a unitary operator on $L^2(\mu)$ and that 
\begin{equation}\label{E:relt}
\rho_x\colon L^0(\lambda, {\mathbb T})\to {\mathcal U}\big( L^2(\mu)\big)
\end{equation}
is a unitary representation of $L^0(\mu, {\mathbb T})$. 

Note that strictly speaking $\rho_x$ depends also on $\mu$. We do not reflect this fact in our notation, that is, we use 
the same piece of notation $\rho_x$ to denote unitary representations on $L^2(\mu)$ for all measures $\mu$ 
marginally compatible with $x$ as it will not cause confusion.

Let $\mu$ be a measure compatible with $x$. We define now a subrepresentation of $\rho_x$ on $L^2(\mu)$. 
We let 
\begin{equation}\label{E:spsy}
\widetilde{L^2}(\mu)
\end{equation}
be the closed subspace of $L^2(\mu)$ consisting of all (equivalence classes of) 
functions invariant under good homeomorphisms of $C_x$. Lemma~\ref{L:sym} implies that, for each $\phi\in L^0(\lambda, {\mathbb T})$, 
the function $\rho_x(\phi)\big( f \big)$, as defined by \eqref{E:pire}, is an element of $\widetilde{L^2}(\mu)$ if $f$ is. This means that  
$\widetilde{L^2}(\mu)$ 
is a subspace of $L^2(\mu)$ invariant under the representation $\rho_x$ from \eqref{E:relt}. We denote the restriction of 
$\rho_x$ to $\widetilde{L^2}(\mu)$ by the same latter, that is, 
\begin{equation}\label{E:rhsy}
\rho_x\colon  L^0(\lambda, {\mathbb T}) \to {\mathcal U}\big(\widetilde{L^2}(\mu)\big).
\end{equation} 

Two instances of the representation $\rho_x$ that we will use most are the above instance on the space 
$\widetilde{L^2}(\mu)$, for a measure $\mu$ compatible with $x$, and the instance on the space 
$L^2(\mu\res U)$, where $\mu$ is a measure compatible with $x$ and $U$ is a basic set for $x$. Here, by $\mu\res U$, 
we understand a Borel measure on $C_x$ defined by 
\[
\big( \mu\res U\big) (A) = \mu(A\cap U), 
\]
for Borel subsets $A$ of $C_x$. Note that here $\mu\res U$ is marginally compatible with $x$. 
The next lemma describes an embedding between these two representations.

\begin{lemma}\label{L:ident} 
Let $\mu$ be a measure compatible with $x$, and let $U$ be a basic set for $x$. 
There exists a Hilbert space map 
\begin{equation}\label{E:mapd}
L^2(\mu\res U)\ni f\to {\widetilde f}\in  \widetilde{L^2}(\mu)
\end{equation} 
that is equivariant between $\rho_x$ in $L^2(\mu\res U)$ and $\rho_x$ in $\widetilde{L^2}(\mu)$. 
\end{lemma} 

\begin{proof} 
Fix $f\in L^2(\mu\res U)$. Now, $f$ is an equivalence class of functions on $C_x$ measurable with respect to $\mu\res U$. Pick 
a representative $F$ in the class $f$ such that 
\begin{equation}\label{E:repfu} 
F\res (C_x\setminus U) = 0. 
\end{equation}
Such an $F$ can always be found. Define 
\begin{equation}\label{E:averd}
{\widetilde f}= \frac{1}{N} \sum_\delta \big( F\circ \widetilde{\delta}^{-1}\big)\in \widetilde{L^2}(\mu),
\end{equation} 
where the sum is taken over all good permutations of $D(x)$ and $N$ is the number of good permutations of $D(x)$. 
It is clear that ${\widetilde f}\in \widetilde{L^2}(\mu)$; in fact, 
\eqref{E:averd} specifies the same element of $\widetilde{L^2}(\mu)$ regardless of which representative $F$ of $f$ 
is chosen as long as it fulfills \eqref{E:repfu}. Thus, the map \eqref{E:mapd} is well defined. 
The conclusion that \eqref{E:mapd} 
is a Hilbert space map follows from Lemma~\ref{L:basper} and the invariance of $\mu$ under good homeomorphisms of $C_x$. 
The equivariance of the map \eqref{E:mapd} is a consequence of Lemma~\ref{L:sym}. 
\end{proof}

\section{Known results on unitary representations of $L^0(\lambda, {\mathbb T})$}\label{S:knre} 

Theorem~\ref{T:rep} below gives a general form of a unitary representation of 
$L^0(\lambda, {\mathbb T})$. It was proved in \cite[Theorem~2.1]{Sol}. The framework of the statement below 
differs somewhat from that in \cite{Sol}; for example, good homeomorphisms introduced in this paper together 
with the definition of the space $\widetilde{L^2}(\mu)$ allow us 
to remove an arbitrarily chosen linear order on $2^{\mathbb N}$ from the formulation of the theorem. 
However, translating the statement from \cite{Sol} into the one below is not difficult.
One may view Theorem~\ref{T:rep} as a spectral theorem for unitary representations of $L^0(\lambda, {\mathbb T})$. 
This theorem forms the basis of our proof of Theorem~\ref{T:main}. 

Recall the definitions \eqref{E:pire}, \eqref{E:spsy},  and \eqref{E:rhsy}.

\begin{theorem}[\cite{Sol}]\label{T:rep} 
Let $\xi\colon L^0(\lambda, {\mathbb T})\to {\mathcal U}(H)$ be a unitary representation on a separable Hilbert space $H$. 
Let $H_0$ be the orthogonal complement of the subspace of $H$ consisting of vectors fixed by the representation. 
Then the representation restricted to $H_0$ 
is determined by a sequence of finite Borel measures 
$(\mu^j_x)_{x\in {\mathbb N}[{\mathbb Z}^\times], j\in {\mathbb N}}$ such that, for each $j$, 
\[
\mu_x^j\hbox{ is a measure on $C_x$ compatible with $x$, and }  \mu^{j+1}_x\preceq \mu^j_x. 
\]
The representation restricted to $H_0$ is isomorphic to the $\ell^2$-sum over 
$x\in {\mathbb N}[{\mathbb Z}^\times]$ and $j\in {\mathbb N}$ of the representations 
\begin{equation}\label{E:pff}
L^0(\lambda, {\mathbb T})\times \widetilde{L^2}(\mu^j_x) \ni (\phi, f) 
\to \rho_x(\phi)\big( f \big)\in \widetilde{L^2}(\mu^j_x). 
\end{equation}
Furthermore, the sequence $(\mu^j_x)_{x\in {\mathbb N}[{\mathbb Z}^\times], j\in {\mathbb N}}$ is unique up to mutual absolute continuity of its entries. 
\end{theorem}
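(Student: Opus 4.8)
The plan is to treat Theorem~\ref{T:rep} as a spectral--multiplicity theorem for the Polish group $L^0(\lambda,{\mathbb T})$ and to build the decomposition from the compact subgroups ${\mathbb S}_n$, which increase with $n$ and whose union $\bigcup_n {\mathbb S}_n$ is dense in $L^0(\lambda,{\mathbb T})$ for convergence in measure. First I would pass to $H_0$: the fixed subspace is $\xi$-invariant, so it splits off orthogonally and one works on its complement. Restricting $\xi$ to ${\mathbb S}_n\cong {\mathbb T}^{2^n}$ gives a unitary representation of a compact abelian group, which by the Peter--Weyl theorem decomposes $H_0$ into an orthogonal sum of character eigenspaces indexed by $\widehat{{\mathbb S}_n}\cong {\mathbb Z}^{2^n}$; equivalently, each $\phi\in {\mathbb S}_n$ acts through a projection--valued measure on $\widehat{{\mathbb S}_n}$. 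As $n$ grows the inclusions ${\mathbb S}_n\subseteq {\mathbb S}_{n+1}$ make these decompositions refine one another coherently (the restriction maps $\widehat{{\mathbb S}_{n+1}}\to\widehat{{\mathbb S}_n}$ sending an eigenspace into a coarser one), so that the dense subgroup $\bigcup_n{\mathbb S}_n$ acts through a single projection--valued measure on the inverse limit of the duals $\widehat{{\mathbb S}_n}$.

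The decisive step is to exploit continuity of $\xi$ for the topology of convergence in measure in order to cut the support of this spectral measure down to characters of a very restricted form. A character of ${\mathbb S}_n$ is an exponent vector $(k_s)_{s\in 2^n}\in {\mathbb Z}^{2^n}$, and testing it against functions $\phi$ that differ from the constant $1$ only on small cylinders shows that continuity forces two things: each surviving character has only finitely many nonzero exponents, and the spectral mass is distributed along the Cantor set absolutely continuously with respect to $\lambda$. Quantifying this is exactly what produces the ``tame'' shape recorded by ${\mathbb N}[{\mathbb Z}^*]$: in the limit a surviving character is determined by the finite multiset of its nonzero integer exponents---an element $x\in {\mathbb N}[{\mathbb Z}^*]$, where the value $k\in{\mathbb Z}^*$ occurs $x(k)$ times---together with an assignment of a point of $2^{\mathbb N}$ to each occurrence, i.e.\ a point $\alpha\in C_x$. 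Comparing with Lemma~\ref{L:pfco}, the character attached to $(x,\alpha)$ is precisely $\phi\mapsto R_x(\phi)(\alpha)$, so the surviving part of the spectral measure, grouped by shape $x$, becomes a finite Borel measure $\mu_x$ on $C_x$.

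It then remains to check that each $\mu_x$ lies in $\mathcal M$ and to read off the multiplicities. Marginal compatibility (a) is the absolute continuity forced above; the diagonals carry no mass (c) because two occurrences sitting at the same point of $2^{\mathbb N}$ recombine their exponents into a strictly different shape, so a genuine shape-$x$ character needs distinct coordinates and $\mu_x$ concentrates on $C^0_x$; and invariance under good homeomorphisms (b) holds because relabeling occurrences of equal exponent does not change the character, which is also the content of Lemma~\ref{L:sym}. Since equal-exponent occurrences are interchangeable, the natural Hilbert space carrying the shape-$x$ part is the symmetric space $\widetilde{L^2}(\mu_x)$ and the action is $\rho_x$ as in \eqref{E:pire} and \eqref{E:rhsy}. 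Finally, a fixed shape $x$ may occur with multiplicity; applying Hahn--Hellinger multiplicity theory shape by shape yields the decreasing chain of measure classes with $\mu^{j+1}_x\preceq \mu^j_x$, where $\mu^j_x$ captures the part of multiplicity at least $j$. This simultaneously delivers the isomorphism with the $L^2$--sum of the representations in \eqref{E:pff} and the uniqueness of the family $(\mu^j_x)$ up to mutual absolute continuity, the latter being just the unitary invariance of the Hahn--Hellinger data. The hard part is the second step: extracting the shape $x$ and the absolutely continuous marginal structure from bare continuity in measure, which is where the combinatorics of ${\mathbb N}[{\mathbb Z}^*]$ and the spaces $C_x$ must be constructed rather than assumed.
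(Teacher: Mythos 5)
You should first be aware that this paper contains no proof of Theorem~\ref{T:rep} to compare against: the theorem is imported verbatim from the author's earlier work \cite{Sol}, where it is Theorem~2.1, and the present paper only uses it. So your proposal can only be judged on its own terms and against what the cited proof must accomplish. On that score, your outline is the natural one and is fully consistent with the formalism the paper sets up around the statement: split off the fixed vectors; decompose $H_0$ under the dense increasing union $\bigcup_n {\mathbb S}_n$ of tori via Peter--Weyl; assemble the refining eigenspace decompositions into a single projection-valued measure on the inverse limit of the duals $\widehat{{\mathbb S}_n}$; identify the surviving characters with pairs $(x,\alpha)$, $x\in {\mathbb N}[{\mathbb Z}^*]$, $\alpha\in C^0_x$, acting by $\phi\mapsto R_x(\phi)(\alpha)$ exactly as in Lemma~\ref{L:pfco}; and read conditions (a)--(c) and the role of $\widetilde{L^2}(\mu_x)$ the way you do.

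The genuine gap is the step you yourself flag as ``the hard part'' and then do not carry out, and it is not a detail --- it is the theorem. A point of the inverse limit of the $\widehat{{\mathbb S}_n}$ is a coherent integer assignment $u\mapsto k_u$ on finite binary strings satisfying $k_u=k_{u0}+k_{u1}$, and such a point is of your tame form $(x,\alpha)$ precisely when $\sup_n\sum_{s\in 2^n}|k_s|<\infty$; points of unbounded variation exist in abundance (split $0=1+(-1)$ repeatedly down the tree), and nothing in Peter--Weyl or in the inverse-limit bookkeeping excludes spectral mass on them. The analytic content of the cited theorem is the quantitative argument that continuity of $\xi$ for convergence in measure forces the spectral measure to vanish on the unbounded-variation part and forces the marginals of the tame part to be $\lambda$-absolutely continuous. ``Testing against functions that differ from $1$ only on small cylinders'' names the right tool, but the argument requires converting strong convergence $\xi(\phi_j)\to I$, for every sequence $\phi_j\to 1$ in measure, into uniform control of the spectral mass of characters charging small cylinders, and then deducing both finiteness of the exponent multiset and absolute continuity; none of this is done. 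A secondary, smaller issue: the concluding appeal to Hahn--Hellinger theory ``shape by shape'' must be adapted to the symmetrized setting, since the parametrization of characters by $C^0_x$ is redundant under good permutations; running the multiplicity theory for measures invariant under good homeomorphisms and concentrated off the diagonals, with the spaces $\widetilde{L^2}(\mu^j_x)$, is precisely what produces the decreasing chain $\mu^{j+1}_x\preceq\mu^j_x$ and the uniqueness clause. As it stands, your text is an accurate table of contents for the proof in \cite{Sol}, not a proof.
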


\noindent {\bf From this point on, given a unitary representation $\xi\colon L^0(\lambda, {\mathbb T})\to {\mathcal U}(H)$, 
for a separable Hilbert space $H$, we write}
{\bf \[
\mu_x = \mu^1_x,\hbox{ for }x\in {\mathbb N}[{\mathbb Z}^\times]. 
\]}
We neglect to indicate the dependence of $\mu_x$ on $\xi$, as the representation will always be clear from the context. 

Theorem~\ref{T:dlco} below was proved by Etedadialiabadi in his PhD thesis and published in \cite[Theorem~4.4]{Ete}. The framework of point (ii) of this theorem as stated below, 
in particular, the algebraic 
notions associated with ${\mathbb N}[{\mathbb Z}^\times]$,  are new here and somewhat different from the framework in \cite{Ete}; 
passing between the two formulations is not difficult. 
Given a unitary representation $\xi$ of $L^0(\lambda, {\mathbb T})$, Theorem~\ref{T:dlco} 
translates the condition asserting that, 
for a generic $\phi\in L^0(\lambda, {\mathbb T})$, the maximal 
spectral type of the operator $\xi(\phi)$ satisfies the property in Theorem~\ref{T:spge} 
into a condition on the sequence of measures associated with $\xi$ by Theorem~\ref{T:rep}.

\begin{theorem}[\cite{Ete}]\label{T:dlco} 
Let $\xi\colon L^0(\lambda, {\mathbb T})\to {\mathcal U}(H)$ be a unitary representation on a separable Hilbert space $H$. 
For $\psi\in L^0(\lambda, {\mathbb T})$, let $\nu(\psi)$ be the maximal spectral type of the unitary operator $\xi(\psi)$. 
The following two conditions are equivalent. 
\begin{enumerate} 
\item[(i)] For a generic element $\phi$ of $L^0(\lambda, {\mathbb T})$ and 
$\ell_1, \dots , \ell_p, \ell'_1, \dots , \ell'_{p'}\in {\mathbb N}$, if the sequences 
$(\ell_1, \dots , \ell_p)$ and $(\ell'_1, \dots , \ell'_{p'})$ are not rearrangements of each other, then 
\[
\nu(\phi^{\ell_1}) \ast \cdots \ast \nu(\phi^{\ell_p}) \perp \nu(\phi^{\ell'_1}) \ast \cdots \ast \nu(\phi^{\ell'_{p'}}). 
\]

\item[(ii)] For 
$\ell_1, \dots , \ell_p, \ell'_1, \dots , \ell'_{p'}\in {\mathbb Z}^\times$ and $x_1, \dots , x_p, x'_1, \dots , x'_{p'}\in {\mathbb N}[{\mathbb Z}^\times]$, 
if 
\[
\ell_1 x_1 \oplus \cdots \oplus \ell_p x_p = \ell'_1x'_1\oplus \cdots \oplus \ell'_{p'} x'_{p'} 
\]
and the sequences $(\ell_1, \dots , \ell_p)$, $(\ell'_1, \dots , \ell'_{p'})$ are not rearrangements of each other, then 
\[
\ell_1 \mu_{x_1} \otimes \cdots \otimes \ell_p \mu_{x_p} \perp \ell'_1\mu_{x'_1}\otimes \cdots \otimes \ell'_{p'} \mu_{x'_{p'}}. 
\]
\end{enumerate} 
\end{theorem}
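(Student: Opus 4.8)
\subsection*{Proof proposal}

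The plan is to prove the equivalence by setting up an explicit dictionary between the maximal spectral types of the operators $\xi(\phi^{\ell})$ and pushforwards of the measures $\mu_x$ under the functions $R_x(\phi)$, and then reducing both (i) and (ii) to a single genericity statement about how a generic $\phi$ transports measures. First I would record the standard spectral fact that, for a measurable $g\colon Z\to{\mathbb T}$ and a finite measure $\sigma$ on $Z$, the maximal spectral type of the multiplication operator by $g$ on $L^2(\sigma)$ is mutually absolutely continuous with the pushforward $g_*\sigma$. Applied to the decomposition of Theorem~\ref{T:rep}, where $\xi(\phi)^{\ell}$ acts on the component $\widetilde{L^2}(\mu^j_x)$ as multiplication by $R_x(\phi)^{\ell}$, together with Lemma~\ref{L:idio}(ii) (which gives $R_x(\phi)^{\ell}=R_{\ell x}(\phi)\circ e_{x,\ell}$), Lemma~\ref{L:sym}, and the definition $\ell\mu_x=(e_{x,\ell})_*\mu_x$, this yields that $\nu(\phi^{\ell})$, the maximal spectral type of $\xi(\phi)^{\ell}$ on $H_0$, is mutually absolutely continuous with $\sum_{x}(R_{\ell x}(\phi))_*(\ell\mu_x)$; the index $j$ is absorbed because $\mu^{j+1}_x\preceq\mu^j_x$ gives $\ell\mu^{j+1}_x\preceq\ell\mu^j_x$ by Lemma~\ref{L:abse}(ii). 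The fixed part of $H$ contributes only a point mass at $1\in{\mathbb T}$ common to every convolution, so I would pass to $H_0$ at the outset (and note that generically these pushforwards are continuous, so no spurious common atoms arise).

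Next I would compute the convolutions. Realizing $\nu(\phi^{\ell_1})\ast\cdots\ast\nu(\phi^{\ell_p})$ as the maximal spectral type of the tensor product operator $\xi(\phi)^{\ell_1}\otimes\cdots\otimes\xi(\phi)^{\ell_p}$ on $H_0^{\otimes p}$, and using that convolution on ${\mathbb T}$ is the pushforward of a product measure under multiplication, together with Lemma~\ref{L:idio}(i) (which turns $\prod_i R_{\ell_i x_i}(\phi)$ into $R_{\ell_1 x_1\oplus\cdots\oplus\ell_p x_p}(\phi)\circ h_{\bar\iota}$) and the definition of $\otimes$, I get that this convolution is mutually absolutely continuous with $\sum_{x_1,\dots,x_p}(R_{Y}(\phi))_*(\ell_1\mu_{x_1}\otimes\cdots\otimes\ell_p\mu_{x_p})$, where $Y=\ell_1 x_1\oplus\cdots\oplus\ell_p x_p$ depends on the tuple. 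Grouping the finitely many terms by the value $Y\in{\mathbb N}[{\mathbb Z}^*]$, the convolution becomes $\sum_{Y}(R_{Y}(\phi))_*\Sigma_Y$, where $\Sigma_Y=\sum\{\ell_1\mu_{x_1}\otimes\cdots\otimes\ell_p\mu_{x_p}:\ell_1 x_1\oplus\cdots\oplus\ell_p x_p=Y\}$ is a measure on $C_Y$, and likewise the primed data gives measures $\Sigma'_Y$. Since a countable sum of finite measures is singular to another exactly when the two families are pairwise singular, and each summand of $\Sigma_Y$ is $\preceq\Sigma_Y$, the statement ``$\Sigma_Y\perp\Sigma'_Y$ for every $Y$'' is exactly the conjunction over all tuples with $\ell_1 x_1\oplus\cdots\oplus\ell_p x_p=\ell'_1 x'_1\oplus\cdots\oplus\ell'_{p'}x'_{p'}$ of the singularities $\ell_1\mu_{x_1}\otimes\cdots\otimes\ell_p\mu_{x_p}\perp\ell'_1\mu_{x'_1}\otimes\cdots\otimes\ell'_{p'}\mu_{x'_{p'}}$ appearing in (ii).

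With this dictionary in hand, the equivalence reduces to the following genericity lemma, which I regard as the technical core: for finite Borel measures $\sigma_1$ concentrated on $C^0_{Y_1}$ and $\sigma_2$ concentrated on $C^0_{Y_2}$ with marginals absolutely continuous with respect to $\lambda$, and for a generic $\phi\in L^0(\lambda,{\mathbb T})$, one has $(R_{Y_1}(\phi))_*\sigma_1\perp(R_{Y_2}(\phi))_*\sigma_2$ unless $Y_1=Y_2$ and $\sigma_1\not\perp\sigma_2$. Granting this, I would finish as follows. Fix a non-rearranging pair of sequences; for a given $\phi$ the two convolutions are singular iff all cross pairs $(R_Y(\phi))_*\Sigma_Y\perp(R_{Y'}(\phi))_*\Sigma'_{Y'}$ hold. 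By the lemma, pairs with $Y\neq Y'$ are generically singular unconditionally, while the pair with $Y=Y'$ is generically singular precisely when $\Sigma_Y\perp\Sigma'_Y$; since there are only countably many sequences and values $Y$, and a countable intersection of comeager sets is comeager in the Polish group $L^0(\lambda,{\mathbb T})$, condition (i) holds iff $\Sigma_Y\perp\Sigma'_Y$ for every non-rearranging pair and every $Y$, which by the previous paragraph is exactly (ii). The non-singular direction of the lemma is immediate and holds for every $\phi$, since a nonzero common absolutely continuous part of $\sigma_1,\sigma_2$ pushes forward to a nonzero common part of the images.

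The main obstacle is the singular direction of the core lemma: that for a generic $\phi$ the map $R_Y(\phi)$ neither merges two mutually singular measures of the same type $Y$ nor identifies the images coming from distinct types $Y_1\neq Y_2$, despite $R_Y(\phi)$ collapsing the high-dimensional space $C_Y$ onto ${\mathbb T}$. I would attack this by approximation through the subgroups ${\mathbb S}_n$: by Lemma~\ref{L:pfco}, for $\phi\in{\mathbb S}_n$ the function $R_Y(\phi)$ is constant on each $n$-basic set $\llbracket u\rrbracket$ with value the monomial $\prod_{(k,i)\in D(Y)}z_{u(k,i)}^{k}$, whose multiset of exponents $\{k\text{ with multiplicity }Y(k)\}$ determines $Y$. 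Since, by Lemma~\ref{L:basi}, the $n$-basic sets generate the Borel structure of $C^0_Y$ as $n\to\infty$, two mutually singular measures can be separated into disjoint unions of basic cells at a finite level, and the generic independence of the values $z_s$ keeps the corresponding monomial images separated; distinct exponent multisets (the case $Y_1\neq Y_2$) yield images scaling differently under the ${\mathbb Z}^*$-action and hence generically singular. Turning these finite-level separations into a single comeager set of $\phi$ that survives the passage $n\to\infty$, uniformly enough in $n$, is the delicate step and the heart of the argument.
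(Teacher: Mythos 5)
First, a point of order: the paper does not prove this theorem at all --- it is quoted from \cite[Theorem~4.4]{Ete} --- so your attempt cannot be checked against an internal argument; it has to stand as a complete proof, and it does not. Your spectral dictionary is sound and is surely the right first half of any proof: the maximal spectral type of multiplication by $R_x(\phi)$ on $\widetilde{L^2}(\mu^j_x)$ is $\big(R_x(\phi)\big)_*\mu^j_x$; Lemma~\ref{L:idio} together with Lemma~\ref{L:sym} converts convolutions into pushforwards of $\otimes$-products; and a countable sum of finite measures is singular to another iff the summands are pairwise singular. But after this translation, \emph{all} of the content of the theorem sits inside your ``core genericity lemma,'' which you explicitly do not prove (``the delicate step and the heart of the argument''). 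That lemma \emph{is} the theorem: the substantive direction (ii)$\Rightarrow$(i) is precisely the assertion that mutual singularity of the $\Sigma_Y$'s survives the collapse $R_Y(\phi)\colon C_Y\to\mathbb{T}$ for a comeager set of $\phi$. Your plan --- separate singular measures by basic cells at a finite level and invoke ``generic independence of the values $z_s$'' --- only shows that for a generic \emph{step function} $\phi\in{\mathbb S}_n$ the (atomic) pushforwards have disjoint supports. Disjointness of supports of approximants implies nothing about singularity of the limits: both the singular and the non-singular behavior are dense in $L^0(\lambda,{\mathbb T})$, and upgrading the singularity conditions to comeager sets is exactly the del Junco--Lema{\'n}czyk-type work that \cite{Ete} carries out and that is absent here.

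There are also two concrete defects in the reduction itself. (a) Your core lemma is false as stated. Take $Y$ with $Y(1)=2$, let $\sigma_1=(\lambda\times\lambda)\res\{(\alpha,\beta)\colon \alpha<\beta\}$ for a Borel linear order on $2^{\mathbb N}$, and let $\sigma_2$ be the image of $\sigma_1$ under the coordinate swap. Then $\sigma_1\perp\sigma_2$, both are marginally compatible with $Y$ and concentrated on $C^0_Y$, yet $R_Y(\phi)(\alpha,\beta)=\phi(\alpha)\phi(\beta)$ is symmetric, so $\big(R_Y(\phi)\big)_*\sigma_1=\big(R_Y(\phi)\big)_*\sigma_2$ for \emph{every} $\phi$. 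The lemma can only hold for measures invariant under good homeomorphisms (which the $\Sigma_Y$ are, by Lemmas~\ref{L:compa} and \ref{L:compa2}); neither your statement nor your proof plan uses this hypothesis, and a correct argument must. (b) Your closing claim that the conjunction ``$\Sigma_Y\perp\Sigma'_Y$ for all $Y$'' is ``exactly (ii)'' is not exact: condition (i) quantifies over exponents in $\mathbb N$, so your dictionary produces only the instances of (ii) with $\ell_i,\ell'_j\in{\mathbb N}$, while (ii) quantifies over ${\mathbb Z}^*$. The negative-coefficient instances are not formal consequences of the positive ones: for example, for any Koopman representation the conjugation $f\mapsto\bar f$ commutes with the representation, and the uniqueness clause of Theorem~\ref{T:rep} then forces $\mu_{(-1)x}\sim(-1)\mu_x$, so the instance $p=p'=1$, $\ell_1=-1$, $\ell'_1=1$, $x'_1=(-1)x_1$ of (ii) concerns a relation entirely invisible to positive powers of $\phi$. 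So even granting your core lemma, what you have outlined is an equivalence of (i) with (ii) restricted to positive coefficients; bridging to the stated ${\mathbb Z}^*$ version (or clarifying how it is to be read) requires a separate argument that the proposal does not contain.
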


\section{The theorem on Koopman representations and an outline of its proof}\label{S:thou}

Theorem~\ref{T:notL} below gives an additional non-trivial condition fulfilled by unitary representations of 
$L^0(\lambda, {\mathbb T})$ that arise from 
boolean actions of this group as follows. 
To state it, we need to recall some notions related to boolean actions that will also 
be used later on in the paper.

Let $G$ be a Polish group. A {\bf boolean action of $G$ on $(X,\gamma)$} is a continuous homomorphism 
$\zeta \colon G\to {\rm Aut}(\gamma)$. The word action is justified by viewing $G$ as acting via $\zeta$ on the boolean 
algebra of measure classes of measurable subsets of $(X, \gamma)$ as follows 
\[
gB = \zeta(g)\big(B\big). 
\]
We point out that, by \cite[Proposition~1.3]{GW}, a boolean action of a 
Polish group is induced by a near-action, as defined below, 
and vice-versa each near-action induces a boolean action. 
Recall from \cite[Definition~1.2]{GW} that a {\bf near-action} of $G$ on $(X, \gamma)$ is a Borel map
$G \times X \to X$, $(g, \omega) \to  g\omega$, with the following properties: 
\begin{enumerate}
\item[---] $1\omega = \omega$ for almost every $\omega\in X$ with respect to $\gamma$; 

\item[---] for $g, h\in G$, $g(h\omega) = (gh)\omega$, for almost every $\omega\in X$ with respect to $\gamma$, 
where the set of points $\omega$ 
for which this equality holds depends on $g, h$;

\item[---] the map $X\ni \omega\to g\omega\in X$ is measure preserving with respect to $\gamma$, for each $g\in G$. 
\end{enumerate}
It follows from the above discussion that expressions of the form 
$g \omega$, for $g\in G$ and $\omega\in X$, make sense for a boolean action of $G$ as they are understood in terms of 
a near-action realizing the boolean action. Obviously, for two near-actions realizing the same boolean action and for $g\in G$, 
the values $g\omega$ coincide only on a set of $\omega\in X$ that has measure $1$ with respect to $\gamma$. 

A boolean action of $G$ on $(X,\gamma)$ is called {\bf ergodic} if, for each measure class $B$ of a measurable set in 
$(X, \gamma)$ such that $gB=B$ for each $g\in G$, we have $\gamma(B)=1$ or $\gamma(B)=0$.

Given a boolean action $\zeta\colon L^0(\lambda, {\mathbb T}) \to {\rm Aut}(\gamma)$, the 
{\bf Koopman representation associated with $\zeta$} is the unitary representation of $L^0(\lambda, {\mathbb T})$ 
on $L^2(\gamma)$ given by 
\[
L^0(\lambda, {\mathbb T})\ni \phi\to U_\phi\in {\mathcal U}\big(L^2(\gamma)\big), 
\]
where, for $f\in L^2(\gamma)$,  
\begin{equation}\notag
U_\phi(f)(\omega) =  f(\phi^{-1}\omega), \hbox{ for }\omega\in X. 
\end{equation} 
On the right-hand side of formula 
above, we use a near-action inducing $\zeta$.

\begin{theorem}\label{T:notL}
Assume that a unitary representation of $L^0(\lambda, {\mathbb T})$ is the Koopman representation associated with  
an ergodic boolean action of $L^0(\lambda, {\mathbb T})$. 
Then, for $\ell_1, \dots , \ell_p\in {\mathbb Z}_2$ and $x_1, \dots , x_p, x\in {\mathbb N}[{\mathbb Z}^\times]$
with
\[
\ell_1x_1 \oplus \cdots \oplus  \ell_p x_p = x,
\]
we have 
\[
\ell_1\mu_{x_1} \otimes \cdots \otimes \ell_p\mu_{x_p} \preceq \mu_x. 
\]
\end{theorem}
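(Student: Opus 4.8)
The plan is to exploit the one feature that distinguishes a Koopman representation from a general unitary representation of $L^0(\lambda,{\mathbb T})$: by \eqref{E:koop} the space $L^2(\gamma)$ is an algebra under pointwise multiplication, the operators $U_\phi$ are multiplicative, and $U_\phi(\overline f)=\overline{U_\phi(f)}$. A bounded vector $v_i$ lying in the ``$x_i$-part'' of the decomposition of Theorem~\ref{T:rep} transforms under $U_\phi$ by multiplication by $R_{x_i}(\phi)$; so if $\ell_i=1$ I keep $v_i$, and if $\ell_i=-1$ I replace it by $\overline{v_i}$, which transforms by $R_{x_i}(\phi)^{-1}=R_{\ell_i x_i}(\phi)\circ e_{x_i,\ell_i}$ by Lemma~\ref{L:idio}(ii) and realizes the pushforward $\ell_i\mu_{x_i}$. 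Writing $v_i^{(\ell_i)}$ for this choice, a short argument using Lemma~\ref{L:idio}(i) and the disjointness of distinct parts in Theorem~\ref{T:rep} shows that the pointwise product $w=\prod_i v_i^{(\ell_i)}$ transforms by $\prod_i R_{\ell_i x_i}(\phi)=R_x(\phi)$ and hence lies in the $x$-part, so that its spectral measure is automatically $\preceq\mu_x$. The only real task is therefore to show that, as the $v_i$ vary, these products realize enough spectral measure to recover all of $\ell_1\mu_{x_1}\otimes\cdots\otimes\ell_p\mu_{x_p}$; by Lemma~\ref{L:abseio}(ii) it suffices to treat a countable family of bounded $v_i$ whose associated densities are dense.

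The central computation is an averaging over the finite subgroups ${\mathbb S}_n$, and this is where Theorem~\ref{T:ert} enters. Translating the first $p-1$ factors by $t_1,\dots,t_{p-1}\in{\mathbb S}_n$ and setting $w_{\bar t}=U_{t_1}(v_1^{(\ell_1)})\cdots U_{t_{p-1}}(v_{p-1}^{(\ell_{p-1})})\,v_p^{(\ell_p)}$, each $w_{\bar t}$ again lies in the $x$-part, and I compute the averaged matrix coefficient $\int_{{\mathbb S}_n^{p-1}}\langle U_\phi w_{\bar t},w_{\bar t}\rangle\,d\theta^{p-1}(\bar t)$. Since the action is abelian and $\theta$ is invariant under inversion, Fubini's theorem rewrites the inner $t_i$-integral as $A_n$ applied, in the sense of \eqref{E:opa}, to the bounded correlation function $U_\phi(v_i^{(\ell_i)})\,\overline{v_i^{(\ell_i)}}$; iterating the ergodic convergence of Theorem~\ref{T:ert}(ii) (and using the $L^\infty$-bounds on the $v_i$ to pass products of $L^2$-convergent factors to the limit) then shows that the average converges, for every $\phi$, to $\prod_{i}\langle U_\phi v_i^{(\ell_i)},v_i^{(\ell_i)}\rangle$. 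By Lemma~\ref{L:idio}(i) and the definition of $\otimes$ this product equals $\int R_x(\phi)\,d\tau$, where $\tau=\sigma_1\otimes\cdots\otimes\sigma_p$ and $\sigma_i\preceq\ell_i\mu_{x_i}$ is the spectral measure of $v_i^{(\ell_i)}$. In other words, the averaged spectral measures $\bar\sigma_n=\int_{{\mathbb S}_n^{p-1}}\sigma_{w_{\bar t}}\,d\theta^{p-1}$, each of which is $\preceq\mu_x$, converge weakly to the tensor product $\tau$.

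The hard part is the final step, which is genuinely more than soft analysis: a weak limit of measures $\preceq\mu_x$ need not be $\preceq\mu_x$, even within the class of measures compatible with $x$ (a measure carried by a ``shifted graph'' $\{\beta=\alpha+c\}$ with $c\neq 0$ is compatible with $x$ and singular to the product measure, yet is a weak limit of absolutely continuous measures). I plan to rule this out by upgrading the weak convergence to absolute continuity of the limit through control of the Radon--Nikodym densities $g_n=d\bar\sigma_n/d\mu_x$: here the contraction estimate of Theorem~\ref{T:ert}(i), applied to the bounded functions $|v_i^{(\ell_i)}|^2$, is meant to supply uniform $L^2$-bounds that I will convert into uniform integrability of the family $\{g_n\}$. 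The Dunford--Pettis theorem then yields a weak-$L^1(\mu_x)$ cluster point $g_\infty$, and matching the positive-definite functions $\int R_x(\phi)\,d\bar\sigma_n\to\int R_x(\phi)\,d\tau$ forces $\tau=g_\infty\,\mu_x$, so $\tau\preceq\mu_x$. Taking the supremum over the countable dense family of densities and invoking Lemma~\ref{L:abseio}(ii) (with compatibility of the limit guaranteed by Lemma~\ref{L:compa}) gives $\ell_1\mu_{x_1}\otimes\cdots\otimes\ell_p\mu_{x_p}\preceq\mu_x$. The delicate point, and the place where ergodicity is indispensable, is precisely that averaging over the groups ${\mathbb S}_n$ exhausting $L^0(\lambda,{\mathbb T})$ destroys the correlations between the factors—the spurious ``shift $c$'' above is averaged away—so that the limiting densities remain integrable rather than concentrating onto a $\mu_x$-null set; establishing this uniform integrability is the main obstacle I anticipate.
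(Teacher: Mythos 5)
Your proposal runs on the same engine as the paper---multiply vectors coming from the $x_i$-parts (conjugating when $\ell_i=-1$) and use Theorem~\ref{T:ert} to destroy correlations---but two steps fail as stated, and the second one is fatal to the weak-limit architecture you build. \emph{First}, the claim that $w=\prod_i v_i^{(\ell_i)}$ (and each translate $w_{\bar t}$) lies in the $x$-part, so that $\sigma_{w_{\bar t}}\preceq\mu_x$, is false for arbitrary $v_i$ in the $x_i$-parts. Decompose each $v_i$ over the ${\mathbb S}_n$-eigenspaces: $v_i=\sum_u v_{i,u}$, where $u$ runs over injections $D(x_i)\to 2^n$ and $v_{i,u}$ has ${\mathbb S}_n$-eigenvalue $\prod_{(k,j)}z_{u(k,j)}^{\pm k}$. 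Multiplying out $w$ produces, besides the terms in which the injections involved have pairwise disjoint images (these indeed carry eigenvalues of the injective form belonging to $x$), also ``collision'' terms in which two injections share a value $s\in 2^n$; those carry eigenvalues like $z_s^{k+k'}$, which are the signature of \emph{other} elements of ${\mathbb N}[{\mathbb Z}^*]$. The simplest instance is $p=2$, $x_1=x_2=$ the function $1\mapsto 1$, $v_1=v_2=v$: the diagonal terms of $v^2$ carry eigenvalues $z_s^2$ and belong to the $2x_1$-part, not the $(x_1\oplus x_1)$-part (and $v^2$ can lie entirely there). Orthogonality of the summands in Theorem~\ref{T:rep} does not prevent this, and translating by $t_i$ does not move spectral supports, so the defect persists for every fixed $n$ and $\bar t$. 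The paper's remedy is a disjointness device absent from your proposal: one multiplies only functions whose spectral measures sit on basic sets $\llbracket v_i\rrbracket$ with pairwise disjoint images in $2^n$ (which costs nothing, since compatible measures vanish on diagonals), and then the eigenvalue bookkeeping of Lemmas~\ref{L:pfco}, \ref{L:co1}, \ref{L:kpr}, \ref{L:co2} certifies membership in the $x$-part. Relatedly, your reduction to \emph{bounded} $v_i$ with dense spectral measures inside a part is unjustified: the isomorphism of Theorem~\ref{T:rep} is only unitary, so boundedness does not transfer, and truncating a vector destroys its membership in a part.

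\emph{Second}---the gap you flag yourself---nothing bridges the passage from ``$\bar\sigma_n\preceq\mu_x$ and $\bar\sigma_n\to\tau$ weakly'' to ``$\tau\preceq\mu_x$''. The uniform integrability you hope for has no identifiable source: Theorem~\ref{T:ert}(i) bounds $\|A_n h\|_2$ for functions $h$ on $(X,\gamma)$, whereas the densities $d\bar\sigma_n/d\mu_x$ live on $C_x$, and for fixed $n$ the spectral measure of a single $w_{\bar t}$ can be arbitrarily concentrated relative to $\mu_x$. Worse, since the functions $R_{x'}$ for different $x'$ coincide along diagonals of $C_x$, matrix coefficients alone cannot separate the parts' contributions in the limit; a sequence of measures that are null on diagonals can converge weakly to a diagonal measure whose integrals against $R_x$ mimic a contribution from a collapsed $x'$---this is exactly your ``shifted graph'' phenomenon, and averaging does not supply the quantitative control needed to exclude it. The paper avoids limits of measures altogether: it uses Theorem~\ref{T:ert} not asymptotically but to prove \emph{exact} independence of the disjointly supported random variables $\hat{f}_i$ (Lemma~\ref{L:inde}); exact independence makes the multiplication map an exact equivariant isometry (Lemmas~\ref{L:exmul}, \ref{L:tenor}, \ref{L:emps})---which also settles integrability of products with no boundedness assumption---and then non-vanishing of $L^2(\mu_{U,\bar\iota}\res K)$ transfers through the $[x,K]$-spaces directly into $\mu_x(K)>0$. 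Upgrading your asymptotic decorrelation of matrix coefficients to that exact independence statement is precisely the missing idea.
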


Note that in the theorem above the coefficients $\ell_1, \dots, \ell_p$ are restricted to come from ${\mathbb Z}_2$ rather than 
from the whole ${\mathbb Z}^\times$.

We outline the course of our argument proving Theorem~\ref{T:notL}. 
Let $\sigma$ be the Koopman representation induced by a boolean action of $L^0(\lambda, {\mathbb T})$ on $(X,\gamma)$.
Let $H_0(\gamma)$ be the orthogonal complements of the space consisting of the elements of 
$L^2(\gamma)$ that are fixed by $\sigma$. By ergodicity of the boolean action, 
$H_0(\gamma)$ consist of elements of $L^2(\gamma)$ with zero integral. 
Let $\tau$ be the representation gotten for $\sigma$ by applying Theorem~\ref{T:rep}.. We denote the Hilbert space 
underlying $\tau$ by $H_\tau$. Of 
course, $\tau$ comes in the form of finite measures $\mu^j_x$, for $j\in {\mathbb N}$ and $x\in {\mathbb N}[{\mathbb Z}^\times]$, 
and $\mu_x = \mu^1_x$ for all $x$. 
Finally, let $\Phi\colon H_0(\gamma)\to H_\tau$ be the isomorphism of representations produced in Theorem~\ref{T:rep}. 
Recall also the notion of equivariant Hilbert space map from Section~\ref{Su:equi}.

By the results of Section~\ref{Su:compm}, to prove Theorem~\ref{T:notL},  
it suffices to show only two special cases of the conclusion, that is, for all $x,y, z\in {\mathbb N}[{\mathbb Z}^\times]$, 
\[
\mu_y\otimes \mu_z\preceq \mu_x, \hbox{ if } y\oplus z= x,\; \hbox{ and }\; (-1)\mu_{x} \preceq \mu_{(-1)x}.
\]
Below, we focus on the first case only; the second case is handled by analogous methods. By the definition of 
$\mu_y\otimes \mu_z$ and since basic sets form a topological basis of $C_x$, it is enough to show that 
\begin{equation}\label{E:outin} 
( h_{\bar\iota})_*(\mu_y\times \mu_z)\res U \preceq \mu_x,
\end{equation} 
for each basic for $x$ set $U$ and each $\bar\iota$ as in \eqref{E:seq}. Fix such $U$ and $\bar\iota$, and set 
\[
\mu_{U,{\bar\iota}} = (h_{\bar\iota})_*(\mu_y\times \mu_z)\res U.
\]
Unpacking further, we see that showing inequality \eqref{E:outin} boils down to showing that, 
for each compact subset $K$ of $U$ with $\mu_{U,{\bar\iota}}\big(K\big)>0$, 
there is some $j\in {\mathbb N}$ with $\mu^j_x(K)>0$. 
This translates into proving the following implication for each compact set $K\subseteq U$
\begin{equation}\label{E:impl}
L^2(\mu_{U,{\bar\iota}}\res K)\not= 0 \Longrightarrow L^2(\mu^j_x\res K)\not= 0, \hbox{ for some }j. 
\end{equation}

We view $L^2(\mu_{U,{\bar\iota}})$ as the underlying Hilbert space of the representation $\rho_{U, {\bar\iota}}$ that 
is equal to $\rho_x$, as in Section~\ref{Su:basic}, on $L^2(\mu_{U,{\bar\iota}})$. To prove \eqref{E:impl}, 
it is now natural to seek a direct connection between $\rho_{U, {\bar\iota}}$ and $\tau$. This connection 
comes in the form of a Hilbert space map  
\[
\Psi\colon L^2(\mu_{U,{\bar\iota}})\to H_0(\gamma)
\]
that is equivariant between $\rho_{U,{\bar\iota}}$ and $\sigma$, or rather in the form of its composition 
with the isomorphism $\Phi$ yielding a Hilbert space map 
\begin{equation}\label{E:psip} 
\Phi\circ\Psi\colon L^2(\mu_{U,{\bar\iota}})\to H_\tau 
\end{equation}
that is equivariant between $\rho_{U,{\bar\iota}}$ and $\tau$. Such a $\Psi$ is constructed in Section~\ref{S:more}. 
The work done in Sections~\ref{S:indl}, \ref{S:sesq}, and \ref{S:tens} forms a basis of our construction of $\Psi$. 
In this work, the ergodic theorem for $L^0(\lambda, {\mathbb T})$ is used. 

The connection given by the Hilbert space map \eqref{E:psip} is exploited to get \eqref{E:impl} as follows. 
Given an arbitrary unitary representation $\xi$ of $L^0(\lambda, {\mathbb T})$ on a Hilbert space $H$, we associate 
with $x\in {\mathbb N}[{\mathbb Z}^\times]$ and a compact set $K\subseteq C^0_x$ a subspace of $H$, which we call 
$[x,K]^\xi$. The definition of and the results on $[x,K]^\xi$ are given in Section~\ref{S:subs}. 
It turns out that a Hilbert space map that is equivariant between two unitary representations $\xi_1$ and $\xi_2$  of 
$L^0(\lambda, {\mathbb T})$
maps the space $[x,K]^{\xi_1}$ for the first representation to a subspace of $[x,K]^{\xi_2}$ for the second representation. 
In particular, since Hilbert space maps are embeddings, we have 
\begin{equation}\label{E:dage}
[x,K]^{\xi_1}\not= 0 \Longrightarrow [x,K]^{\xi_2}\not= 0. 
\end{equation} 
Next, we need 
information on the spaces $[x,K]^\xi$ for $\xi= \rho_{U,{\bar \iota}}$  and $\xi=\tau$. We show that 
$[x,K]^{\rho_{U,{\bar \iota}}}$ contains $L^2(\mu_{U, {\bar\iota}}\res K)$, 
while $[x,K]^{\tau}$ is contained in the $\ell^2$-sum of the spaces $L^2(\mu^j_x\res \widetilde{K})$ over all $j\in {\mathbb N}$, 
where $\widetilde{K}$ is a symmetrization of $K$ using good homeomorphisms of $C_x$. 
From these inclusions, together with the general implication \eqref{E:dage} and the existence of the equivariant 
Hilbert space map \eqref{E:psip}, 
implication \eqref{E:impl} follows. This final argument is carried out in detail in Section~\ref{S:finp}.

\section{The proof of Theorem~\ref{T:main} from Theorem~\ref{T:notL}}

\centerline{{\bf From this point on, we write $L^0$ for $L^0(\lambda, {\mathbb T})$.}}

\smallskip

We start our proof with a lemma.

\begin{lemma}\label{L:orto}
Assume that there is a non-meager set of transformations $T\in {\rm Aut}(\gamma)$ such that 
$\langle T\rangle_c$ is included the image of a continuous homomorphism from 
$L^0(\nu, {\mathbb T})$ to ${\rm Aut}(\gamma)$ for some finite Borel measure $\nu$, with $\nu$ possibly depending on $T$. 
Then there exists an ergodic boolean action of $L^0$ on 
$(X,\gamma)$, whose Koopman representation is such that 
\begin{equation}\label{E:rtt}
\mu_x \otimes \mu_x \perp \mu_{x\oplus x}, \hbox{ for all } x\in {\mathbb N}[{\mathbb Z}^\times].
\end{equation}
\end{lemma}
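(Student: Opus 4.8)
The statement I need to prove, Lemma~\ref{L:orto}, combines two separate tasks: first, manufacturing an ergodic boolean action of $L^0$ on $(X,\gamma)$ out of the hypothesized non-meager set of transformations; second, verifying that its Koopman representation satisfies the orthogonality \eqref{E:rtt}. These two tasks pull in opposite directions, so the architecture matters. My plan is to obtain the action using the genericity machinery of Section~\ref{S:defre}, and then to read off the orthogonality relation from the spectral independence theorem, Theorem~\ref{T:spge}, transported through the combinatorial translation of Theorem~\ref{T:dlco}.

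\textbf{Producing the action.}
First I would upgrade the non-meager hypothesis to a genericity statement. The set of $T$ lying in the image of some continuous homomorphism $L^0(\nu,{\mathbb T})\to{\rm Aut}(\gamma)$ is, after a standard coding argument, analytic and hence has the Baire property; being non-meager, it is comeager on some basic open set, and by the homogeneity of ${\rm Aut}(\gamma)$ under conjugation one hopes to push this to comeagerness on all of ${\rm Aut}(\gamma)$. Intersecting with the comeager sets supplied by Theorem~\ref{T:geco}, I obtain a generic $T$ that both lies in the image of a continuous homomorphism $\zeta_0\colon L^0(\nu,{\mathbb T})\to{\rm Aut}(\gamma)$ and generates an extremely amenable, self-centralizing $\langle T\rangle_c$. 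Because $L^0(\nu,{\mathbb T})$ contains a dense copy of $L^0(\lambda,{\mathbb T})$ (any finite Borel measure $\nu$ splits into an atomless part, on which $L^0$ acts, plus atoms giving a ${\mathbb T}$-power factor), and because $T=\zeta_0(\phi_0)$ for some $\phi_0$ generating a dense subgroup, one builds from $\zeta_0$ a continuous homomorphism $\zeta\colon L^0(\lambda,{\mathbb T})\to{\rm Aut}(\gamma)$ with $T$ in the closure of its image, making $\zeta$ a boolean action of $L^0$. Ergodicity of $\zeta$ follows from ergodicity of the generic $T$: the $\zeta$-invariant sets are in particular $T$-invariant, and a generic $T$ is ergodic, so the only invariant sets have measure $0$ or $1$.

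\textbf{Extracting the orthogonality.}
This is where the spectral theory does the real work. The Koopman representation of $\zeta$ assigns, to each $\phi\in L^0$, the unitary $U_\phi$ whose maximal spectral type is $\nu(\phi)$ in the sense of Theorem~\ref{T:dlco}. Since $T$ is generic, Theorem~\ref{T:spge} tells me the powers of $T$ have mutually orthogonal convolutions of maximal spectral types whenever the exponent multisets differ; I must transfer this to a generic $\phi$ in the group $L^0$ acting via $\zeta$. Lemma~\ref{L:kuth} is the bridge: the comeager spectral-independence condition on ${\rm Aut}(\gamma)$ localizes to a comeager condition inside $\langle T\rangle_c\cong$ (the image of $\zeta$), which is precisely condition (i) of Theorem~\ref{T:dlco} for the Koopman representation. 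Applying Theorem~\ref{T:dlco}, condition (ii) holds for the associated measures $\mu_x$. Specializing (ii) to the single relation $2x = x\oplus x$ realized as $\ell_1 x_1 = 2\cdot x$ versus $\ell'_1 x'_1\oplus \ell'_2 x'_2 = x\oplus x$ — that is, taking the sequence $(2)$ against $(1,1)$, which are not rearrangements — yields exactly $\mu_x\otimes\mu_x\perp \mu_{x\oplus x}$ for every $x$, which is \eqref{E:rtt}.

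\textbf{The main obstacle.}
I expect the genuine difficulty to lie not in the spectral bookkeeping but in the first task: manufacturing a bona fide \emph{ergodic} boolean action of the fixed group $L^0(\lambda,{\mathbb T})$ from a homomorphism out of a variable, $T$-dependent $L^0(\nu,{\mathbb T})$ whose domain measure $\nu$ may have atoms. The subtlety is that $T$ need not equal $\zeta_0(\phi_0)$ for a $\phi_0$ generating a dense subgroup of all of $L^0(\nu,{\mathbb T})$; one only knows $T$ is \emph{some} value of $\zeta_0$, so I must first restrict to the closed subgroup topologically generated by a suitable preimage and argue that it carries, or surjects onto, a copy of $L^0(\lambda,{\mathbb T})$ acting ergodically. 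Reconciling the atomic part of $\nu$ with the atomless structure demanded by $L^0(\lambda,{\mathbb T})$, and ensuring the resulting action remains ergodic rather than merely having $T$ in its image, is the delicate point; the interplay between the extreme amenability and self-centralizing properties from Theorem~\ref{T:geco} is what I would rely on to force the domain down to the desired form.
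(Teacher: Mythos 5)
Your proposal follows the paper's skeleton (a generic ergodic $T$ with the properties supplied by Theorems~\ref{T:spge} and~\ref{T:geco} and Lemma~\ref{L:kuth}, reduction to a boolean action of $L^0$, then Theorem~\ref{T:dlco} to turn spectral independence into orthogonality of the measures $\mu_x$), but it has genuine gaps at exactly the two places where the real work lies. First, your construction of the action rests on the claim that $L^0(\nu,{\mathbb T})$ contains a \emph{dense} copy of $L^0(\lambda,{\mathbb T})$. This is false whenever $\nu$ has both a non-trivial atomless part and atoms: writing $L^0(\nu,{\mathbb T})\cong L^0(\nu_1,{\mathbb T})\times L^0(\nu_2,{\mathbb T})$ as you do, a dense subgroup isomorphic to $L^0$ would project onto a dense subgroup of some circle factor of $L^0(\nu_2,{\mathbb T})$, i.e., would admit a non-trivial continuous character --- impossible, since $L^0$ is extremely amenable. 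You flag the surrounding difficulty as ``the main obstacle'' but never resolve it; the paper's resolution is concrete and different: the image $\zeta\big(L^0(\nu,{\mathbb T})\big)$ is \emph{closed} (Theorem~\ref{T:rep} gives this for the atomless factor, compactness for the atomic factor), hence it equals $\langle T\rangle_c$ by Theorem~\ref{T:geco}\,(i); the open mapping theorem for Polish groups then makes $\zeta$ open; and the absence of characters of $\langle T\rangle_c$ (from extreme amenability, Theorem~\ref{T:geco}\,(ii)) forces $\{0\}\times L^0(\nu_2,{\mathbb T})$ into $\ker\zeta$, so $\zeta$ factors through $L^0(\nu_1,{\mathbb T})\cong L^0$. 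Your weaker conclusion --- ``$T$ in the closure of the image'' --- suffices for ergodicity but not for the second half of the lemma: to pull the comeager spectral condition on $\langle T\rangle_c$ back to a comeager set of $\phi\in L^0$ (condition (i) of Theorem~\ref{T:dlco}) you need preimages under $\zeta$ of comeager sets to be comeager, which requires $\zeta$ to map \emph{onto} $\langle T\rangle_c$ and be open; with merely dense image, the preimage of a comeager subset of $\langle T\rangle_c$ can even be empty.

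Second, your final specialization of Theorem~\ref{T:dlco}\,(ii) is incorrect. You take the sequences $(2)$ against $(1,1)$ and assert the relation $2x=x\oplus x$; but in this paper $2x$ denotes the action of $2\in{\mathbb Z}^*$ on $x$, which rescales the domain, ${\rm dom}(2x)=\{2m\mid m\in{\rm dom}(x)\}$, so $2x\neq x\oplus x$ in general and the hypothesis of condition (ii) is simply not met. Even if it were, your choice would yield $2\mu_x\perp\mu_x\otimes\mu_x$, which is not \eqref{E:rtt}. The correct choice is $p=2$, $p'=1$, $\ell_1=\ell_2=\ell_1'=1$, $x_1=x_2=x$, and $x_1'=x\oplus x$: then $1x_1\oplus 1x_2=1x_1'$ holds trivially, $(1,1)$ and $(1)$ are not rearrangements of each other, and condition (ii) gives exactly $\mu_x\otimes\mu_x\perp\mu_{x\oplus x}$. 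A minor further remark: your opening move of upgrading the non-meager hypothesis to comeagerness via conjugation-homogeneity is unnecessary --- a non-meager set already meets the comeager set of ergodic transformations satisfying the relevant generic properties, and that intersection is all the argument needs.
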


\begin{proof} We state the relevant properties of the group $\langle T\rangle_c$ for a generic $T\in {\rm Aut}(\gamma)$. 
\begin{enumerate}
\item[(a)] The set 
\[
\{ S\in \langle T\rangle_c\mid S\hbox{ fulfills the condition in Theorem~\ref{T:spge}}\}
\]
is comeager in $\langle T\rangle_c$. 

\item[(b)] $\langle T\rangle_c$ is the largest abelian subgroup of ${\rm Aut}(\gamma)$ containing $T$. 

\item[(c)] There is no non-trivial continuous homomorphism from $\langle T\rangle_c$ to $\mathbb T$. 
\end{enumerate} 

\noindent Point (a) follows from Theorem~\ref{T:spge} and Lemma~\ref{L:kuth} as soon as we see that the set defined by the condition in 
Theorem~\ref{T:spge} has the Baire property in ${\rm Aut}(\gamma)$. 
But by \cite[Lemma~1.4]{DJL} this set is actually Borel, in fact, a countable intersection of open subsets of ${\rm Aut}(\gamma)$.  
Point (b) is an immediate consequence of 
Theorem~\ref{T:geco}\,(i). As for point (c), a non-trivial 
continuous homomorphism from $\langle T\rangle_c$ to $\mathbb T$ would induce a continuous action of 
$\langle T\rangle_c$ on $\mathbb T$ that does not have fixed points, contradicting Theorem~\ref{T:geco}\,(ii).

By the classical theorem of Halmos, see \cite[Theorem~2.6]{Kec2}, a generic $T\in {\rm Aut}(\gamma)$ is ergodic. 
Therefore, our assumption allows us 
to find an ergodic $T\in {\rm Aut}(\gamma)$ that fulfills (a--c) above and for which there exists 
a continuous homomorphism $\zeta\colon L^0(\nu, {\mathbb T})\to {\rm Aut}(\gamma)$, for some finite Borel measure $\nu$, 
with 
\begin{equation}\label{E:tin}
\langle T\rangle_c \subseteq \zeta\big(L^0(\nu, {\mathbb T})\big). 
\end{equation} 
We fix such a transformation $T$.

The inclusion \eqref{E:tin} together with (b) gives
\[
\langle T\rangle_c = \zeta\big(L^0(\nu, {\mathbb T})\big). 
\]
As a consequence 
\[
\zeta\colon L^0(\nu, {\mathbb T}) \to \langle T\rangle_c
\]
is a continuous surjective homomorphism between Polish groups, which makes it, by \cite[Theorem~2.3.3]{Gao}, an open map. Thus, 
$\langle T\rangle_c$ is isomorphic as a topological group to the quotient group 
\begin{equation}\label{E:isot}
\langle T\rangle_c \cong L^0(\nu, {\mathbb T})/ {\rm ker}(\zeta). 
\end{equation} 
Represent $\nu$ as the sum $\nu_1+\nu_2$, where $\nu_1$ is atomless and $\nu_2$ is purely atomic. 
Then $L^0(\nu, {\mathbb T})$ is isomorphic as a topological group to the product 
\begin{equation}\label{E:lopr}
L^0(\nu, {\mathbb T}) = L^0(\nu_1, {\mathbb T}) \times L^0(\nu_2, {\mathbb T}). 
\end{equation} 
With \eqref{E:lopr} in mind, note that if $\{ 1\}\times L^0(\nu_2, {\mathbb T})$ is not included in ${\rm ker}(\zeta)$, 
then there is a non-trivial continuous homomorphism from $L^0(\nu, {\mathbb T})/ {\rm ker}(\zeta)$ to $\mathbb T$, 
contradicting (c) in view of \eqref{E:isot}. Thus, 
\[
\{ 1\}\times L^0(\nu_2, {\mathbb T}) < {\rm ker}(\zeta). 
\]
It follows that $\zeta$ factors through a continuous surjective homomorphism 
\[
\zeta'\colon  L^0(\nu_1, {\mathbb T}) \to \langle T\rangle_c. 
\]
Since, by the ergodicity of $T$, the group $\langle T\rangle_c$ is non-trivial, we have that $\nu_1$ is non-zero, and, 
therefore, one can assume that $\nu_1=\lambda$.

It follows from the considerations above that there exists a continuous surjective homomorphism 
\[
\zeta\colon L^0\to \langle T\rangle_c\subseteq {\rm Aut}(\gamma). 
\]
Ergodicity of the boolean action $\zeta$ is an immediate consequence of ergodicity of $T$. 
Using openness of $\zeta$, we see that the preimage under $\zeta$ of the set in point (a) is comeager in 
$L^0$. Therefore, condition (i) of 
Theorem~\ref{T:dlco} is fulfilled. It follows from Theorem~\ref{T:dlco} that 
the Koopman representation associated with the boolean action $\zeta$ 
fulfills condition (ii) of that theorem. Taking $p=2$, $p'=1$, $\ell_1=\ell_2=\ell_1'=1$, $x_1=x_2=x$, and 
$x_1' = x\oplus x$ in condition (ii), we see that the Koopman representation fulfills \eqref{E:rtt}. 
\end{proof}

\begin{proof}[Proof of Theorem~\ref{T:main} from Theorem~\ref{T:notL}] 
Assume, towards a contradiction, that for a non-meager set of $T\in {\rm Aut}(\gamma)$, 
$T$ is in the image of a continuous homomorphism from $L^0(\nu, {\mathbb T})$ to ${\rm Aut}(\gamma)$ for 
some finite Borel measure $\nu$. Then, by Lemma~\ref{L:orto}, there is 
a boolean action of $L^0$, which is ergodic and whose Koopman representation fulfills \eqref{E:rtt}. 
On the other hand, by Theorem~\ref{T:notL}, for ergodic boolean actions of $L^0$, we have 
\begin{equation}\label{E:sst}
\mu_x\otimes \mu_x \preceq \mu_{x\oplus x}, \hbox{ for all } x\in {\mathbb N}[{\mathbb Z}^\times].
\end{equation} 
Now \eqref{E:rtt} and \eqref{E:sst} give $\mu_x=0$, for all $x\in {\mathbb N}[{\mathbb Z}^\times]$. 
Thus, the boolean action of $L^0$ is trivial, making it not ergodic, a contradiction. 
\end{proof}

\section{Ergodic theorem for $L^0(\lambda, {\mathbb T})$} 

Theorem~\ref{T:ert} below plays an auxiliary, but important, role in the proof of Theorem~\ref{T:notL}. 
It is an analogue for ergodic boolean actions of $L^0$ of the ergodic theorem. 
It concerns the sequences \eqref{E:opa} defined below. 

Boolean and near-actions 
are defined in Section~\ref{S:thou}; groups ${\mathbb S}_n$ and notation concerning these groups 
are defined in Section~\ref{Su:twno}.

For measurability considerations, we will need the following lemma. 
We will use it a couple of times. 

\begin{lemma}\label{L:help}
Let $H$ be a compact metric group equipped with the 
probability Haar measure $\theta$. 
Let $\zeta\colon H\to {\rm Aut}(\gamma)$ be a boolean action of $H$. 
Then a near action $H\times X\to X$ inducing $\zeta$ is measure preserving 
between $(H\times X, \theta\times \gamma)$ and $(X,\gamma)$, 
that is, for each measurable set $A\subseteq X$, 
\[
(\theta\times \gamma)\big( \{ (h,x)\in H\times X\mid hx\in A\} )\big) = \gamma(A). 
\]
\end{lemma}

\begin{proof} By Fubini's theorem, we have 
\[
\begin{split}
(\theta\times \gamma)\big( \{ (h,x)\in H\times X\mid hx\in A\} )\big) = \int_H \gamma(h^{-1}A)\, d\theta(h) = \int_H \gamma(A)\, d\theta =\gamma(A),
\end{split} 
\]
and the lemma follows. 
\end{proof}

Assume we have a boolean action of $L^0$ on $(X,\gamma)$. Fix $n\in {\mathbb N}$. 
By Lemma~\ref{L:help}, applied to $H={\mathbb S}_n$, and Fubini's theorem, for each $f\in L^1(\gamma)$, the function 
\[
{\mathbb S}_n\ni t\to f(t\omega) 
\]
is in $L^1(\theta)$ for almost all $\omega\in X$ with respect to $\gamma$. This observation allows us 
to define, for almost all $\omega\in X$
with respect to $\gamma$, 
\begin{equation}\label{E:opa}
(A_nf)(\omega) = \int_{{\mathbb S}_n}f(t \omega)\,d\theta(t).
\end{equation} 
Furthermore, $A_nf$ is in $L^1(\gamma)$. 

We can now state the ergodic theorem for $L^0$. 
Our original proof of it (with $f\in L^2(\gamma)$ and the convergence in the conclusion being in $L^2(\gamma)$, 
which is sufficient for our applications) 
used  the main result from \cite{Sol} restated above as Theorem~\ref{T:rep}. 
The proof below was pointed out to us by Glasner and Weiss, and we reproduce it here with their permission.

\begin{theorem}\label{T:ert} 
Let an ergodic boolean action of $L^0$ on $(X, \gamma)$ be given. For $f\in L^1(\gamma)$, 
the sequence $(A_nf)_n$ converges pointwise almost everywhere with respect to $\gamma$ to the function 
constantly equal to $\int_X f\,d\gamma$. 
\end{theorem}

\begin{proof}
For $n\in {\mathbb N}$, define ${\mathcal F}_{-n}$ 
to consist of all $\gamma$-measurable subsets  $A$ of $X$ such that 
$\gamma(tA\setminus A)=0$ for all $t\in {\mathbb S}_n$. 
It is clear that ${\mathcal F}_{-n}$ is a $\sigma$-algebra and that ${\mathcal F}_{-n}\supseteq {\mathcal F}_{-(n+1)}$. Set 
${\mathcal F}_{-\infty} = \bigcap_n {\mathcal F}_{-n}$. 
By the backwards martingale theorem \cite[Theorem~4.7.3]{Dur}, we have 
\begin{equation}\label{E:maco}
E\big(f|{\mathcal F}_{-n}\big) \to E\big(f|{\mathcal F}_{-\infty}\big)\hbox{ as }n\to \infty,
\end{equation}
where the convergence is taken to be pointwise $\gamma$-almost everywhere. 

By the ergodicity of the boolean action and the density of $\bigcup_n {\mathbb S}_n$ in 
$L^0$, the $\sigma$-algebra 
${\mathcal F}_{-\infty}$ consists of sets of $\gamma$-measure $0$ and of full $\gamma$-measure; thus, 
\[
E\big(f|{\mathcal F}_{-\infty}\big) = \int_X f\,d\gamma.
\]
From the invariance under translations by elements of ${\mathbb S}_n$ 
of the Haar measure $\theta$, $A_nf$ is invariant under the boolean action of ${\mathbb S}_n$, 
so it is measurable with respect to ${\mathcal F}_{-n}$.
This observation, invariance of $\gamma$ under the boolean action of ${\mathbb S}_n$, and Fubini's theorem imply that 
\[
E\big(f|{\mathcal F}_{-n}\big)  = A_nf.
\]
The conclusion follows from \eqref{E:maco}. 
\end{proof}

\section{A subspace for a unitary representation of $L^0(\lambda, {\mathbb T})$}\label{S:subs} 

We start working towards the proof of Theorem~\ref{T:notL}. 
The notion of equivariant Hilbert space map can be found Section~\ref{Su:equi}.

Fix $x\in {\mathbb N}[{\mathbb Z}^\times]$ and a compact set $K\subseteq C^0_x$; they will remain fixed for 
the rest of this section.

Let $\xi$ be a unitary representation of $L^0$ on a Hilbert space $H$. 
We describe here a subspace $[x,K]^\xi$ of $H$ associated 
with a point $x\in {\mathbb N}[{\mathbb Z}^\times]$ and a compact set $K\subseteq C^0_x$. 
In following the definition of this space, it may be useful to keep in mind Lemma~\ref{L:pfco}. An analysis of spaces of this form 
will be crucial in our proof of Theorem~\ref{T:notL}. 
Define the subset
\begin{equation}\label{E:xkx}
[x,K]^\xi
\end{equation}  
of $H$ as follows. A number $n\in {\mathbb N}$ will be called
{\bf admissible} if 
\[
K\subseteq \bigcup_u \llbracket u\rrbracket,
\]
where $u$ varies over the set of all injections $u\colon D(x) \to 2^n$. 
For an admissible $n$, put
\[
P_{n}(K) = \{ u\mid u\colon D(x) \to 2^n \hbox{ an injection and } K\cap \llbracket u\rrbracket \not=\emptyset\} .
\]
Note that $P_n(K)$ is finite. Using compactness of $K\subseteq C^0_x$,
we see that all large enough $n\in {\mathbb N}$ are admissible. 
With a sequence $(z_s\colon s\in 2^n)$, with $z_s\in {\mathbb T}$ for all $s\in 2^n$, we associate the element 
$\sum_{s\in 2^n} z_s\chi_{[s]}$ of $L^0$, which gives the unitary operator
\begin{equation}\label{E:bigf}
\xi(\sum_{s\in 2^n} z_s\chi_{[s]}).
\end{equation}
Define $[x,K]^\xi$ to be the set of all $h\in H$ with the
following property. For every admissible $n\in {\mathbb N}$, $h$ can be represented as
\begin{equation}\label{E:bigs}
h = \sum_{u\in P_n(K)}h_u,
\end{equation}
where, for every $u\in P_n(K)$ and $(z_s\colon s\in 2^n)$, $h_u$ belongs to the eigenspace associated with 
the eigenvalue
\begin{equation}\label{E:bige}
\prod_{(k,i)\in D(x)} z_{u(k,i)}^{k}
\end{equation}
of the operator \eqref{E:bigf}. Since eigenspaces are linear spaces, it follows easily that $[x,K]^\xi$ is a linear subspace of 
$H$, but we will not use this fact.

The following simple, but useful, lemma makes explicit a degree invariance of the space defined above. 

\begin{lemma}\label{L:kpr}  
Let $\xi_1$ and $\xi_2$ be unitary representations of $L^0$ on Hilbert spaces $H_1$ and $H_2$, respectively. Let 
$\Gamma\colon H_1\to H_2$ be a Hilbert space map that is equivariant between $\xi_1$ and $\xi_2$. Then 
\[
\Gamma\big( [x,K]^{\xi_1}\big)\subseteq [x,K]^{\xi_2}.
\]
\end{lemma}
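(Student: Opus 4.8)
The plan is to unwind the definition of the space $[x,K]^{\xi}$ and verify that an equivariant Hilbert space map sends a witnessing decomposition for $\xi_1$ to a witnessing decomposition for $\xi_2$. Concretely, let $h \in [x,K]^{\xi_1}$; I must show $\Gamma(h) \in [x,K]^{\xi_2}$, which means producing, for every admissible $n$, a decomposition $\Gamma(h) = \sum_{u \in P_n(K)} h'_u$ where each $h'_u$ lies in the eigenspace of every operator $\xi_2(\sum_{s \in 2^n} z_s \chi_{[s]})$ associated with the eigenvalue $\prod_{(k,i) \in D(x)} z_{u(k,i)}^k$. The obvious candidate is $h'_u = \Gamma(h_u)$, where $h = \sum_{u \in P_n(K)} h_u$ is the decomposition guaranteed by $h \in [x,K]^{\xi_1}$.

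First I would fix an admissible $n$ and take the decomposition $h = \sum_{u \in P_n(K)} h_u$ from the definition of $[x,K]^{\xi_1}$, so that for each $u$ and each sequence $(z_s)_{s \in 2^n}$ the vector $h_u$ is an eigenvector of $\xi_1(\sum_s z_s \chi_{[s]})$ with eigenvalue $\prod_{(k,i)} z_{u(k,i)}^k$. Applying $\Gamma$ and using its linearity gives $\Gamma(h) = \sum_{u \in P_n(K)} \Gamma(h_u)$, which is a decomposition indexed by $P_n(K)$ as required. The key point is the eigenvalue property: for each fixed $(z_s)$, writing $\phi = \sum_s z_s \chi_{[s]} \in {\mathbb S}_n \subseteq L^0$ and $\omega = \prod_{(k,i)} z_{u(k,i)}^k$ for the associated eigenvalue, equivariance of $\Gamma$ gives
\[
\xi_2(\phi)\big(\Gamma(h_u)\big) = \Gamma\big(\xi_1(\phi)(h_u)\big) = \Gamma(\omega\, h_u) = \omega\, \Gamma(h_u),
\]
so $\Gamma(h_u)$ is an eigenvector of $\xi_2(\phi)$ with the correct eigenvalue $\omega$. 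Since this holds for every choice of $(z_s)$ simultaneously, $\Gamma(h_u)$ satisfies exactly the condition demanded of the $u$-th summand in the definition of $[x,K]^{\xi_2}$. As $n$ was an arbitrary admissible number, $\Gamma(h) \in [x,K]^{\xi_2}$, completing the argument.

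I do not expect any genuine obstacle here; the lemma is essentially a formal consequence of linearity and equivariance, together with the observation that the eigenvalue $\prod_{(k,i)} z_{u(k,i)}^k$ attached to the index $u$ is defined purely combinatorially and does not reference the representation, so it transports verbatim from $\xi_1$ to $\xi_2$. The only point meriting a line of care is that the \emph{same} decomposition $h = \sum_u h_u$ must work for all sequences $(z_s)$ at once (the definition quantifies over $(z_s)$ after fixing the decomposition for a given $n$), but this is automatic: the image decomposition $\Gamma(h) = \sum_u \Gamma(h_u)$ depends only on $n$, and the eigenvalue computation above is valid for each $(z_s)$ separately using that one fixed decomposition. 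One should note that linearity of $\Gamma$ is part of the hypothesis that $\Gamma$ is a Hilbert space map, and injectivity (embedding) of $\Gamma$ is not even needed for this containment, though it is what later yields the nonvanishing implication \eqref{E:dage}.
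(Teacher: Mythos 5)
Your proof is correct and follows essentially the same route as the paper's: linearity of $\Gamma$ transports the decomposition $h=\sum_{u\in P_n(K)}h_u$ to $\Gamma(h)=\sum_u\Gamma(h_u)$, and equivariance shows each $\Gamma(h_u)$ lies in the eigenspace of $\xi_2(\phi)$ with the same (representation-independent) eigenvalue. The paper's proof states exactly these two observations, only more tersely; your additional remarks about the quantifier order and non-necessity of injectivity are accurate but not needed.
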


\begin{proof} It suffices to notice two points. First, $\Gamma$ is linear being a Hilbert space map. 
Second, if $h\in H_1$ is in the eigenspace associated with eigenvalue $c\in {\mathbb T}$ of 
the operator $\xi_1(\phi)$, for some $\phi\in L^0$, then, by equivariance and linearity of $\Gamma$, the vector 
$\Gamma(h)\in H_2$ is in the eigenspace associated with $c$ of the operator $\xi_2(\phi)$.
\end{proof}

The next two lemmas give estimates on the size of the space $[x,K]^\xi$. 
In both lemmas, one can actually prove equalities in place of the indicated
inclusions but, in the sequel, we will only need the inclusions. 

For a set $A\subseteq C_x$, let 
\begin{equation}\label{E:goar}
\widetilde{A} = \bigcup_{\delta}\widetilde{\delta}(A), 
\end{equation} 
where $\delta$ ranges over all good permutations of $D(x)$, and so $\widetilde{\delta}$ ranges 
over all good homeomorphisms of $C_x$; see Section~\ref{Suu:cx} for the definitions of good permutations and good 
homeomorphisms. 

\begin{lemma}\label{L:co2}
Let $\xi$ be a unitary representation of $L^0$. Let $\mu^j_x$, for $j\in {\mathbb N}$ and $x\in {\mathbb N}[{\mathbb Z}^\times]$,  
be measures found for $\xi$ by Theorem~\ref{T:rep}.

Fix $x\in {\mathbb N}[{\mathbb Z}^\times]$ and a compact set $K\subseteq C^0_x$. 
The space $[x, K]^\xi$ 
is a subspace of the $\ell^2$-sum over $j\in {\mathbb N}$ of the spaces $\widetilde{L^2}(\mu^j_x\res \widetilde{K})$. 
\end{lemma}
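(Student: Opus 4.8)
The plan is to analyze what membership in $[x,K]^\xi$ forces about a vector $h\in H_0$, where $H_0$ is the subspace on which the representation decomposes as the $L^2$-sum of the $\rho_x$ on $\widetilde{L^2}(\mu^j_x)$ from Theorem~\ref{T:rep}. Since any fixed vector contributes eigenvalue $1$ to every operator \eqref{E:bigf} regardless of the degree data \eqref{E:bige}, and the degrees $\prod z_{u(k,i)}^k$ are non-trivial functions of $(z_s)$ whenever $x\ne\emptyset$, I expect the fixed-vector part to drop out, so it suffices to work inside $H_0$ and identify $h$ with a sequence $(f_j)_j$, $f_j\in\widetilde{L^2}(\mu^j_x)$, under the isomorphism. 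Under this identification the operator \eqref{E:bigf} acts on each $\widetilde{L^2}(\mu^j_x)$ as multiplication by $R_x(\phi)$ for $\phi=\sum_s z_s\chi_{[s]}$, by the definition \eqref{E:pire} of $\rho_x$.

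First I would fix a large admissible $n$ and the representation $h=\sum_{u\in P_n(K)}h_u$ guaranteed by membership in $[x,K]^\xi$, and translate the eigenvalue condition componentwise: writing $h_u$ as the sequence $(f_{u,j})_j$, each $f_{u,j}\in\widetilde{L^2}(\mu^j_x)$ must satisfy $R_x(\phi)f_{u,j}=\big(\prod_{(k,i)\in D(x)}z_{u(k,i)}^k\big)f_{u,j}$ for \emph{all} choices of $(z_s)_{s\in 2^n}$. The key computational input is Lemma~\ref{L:pfco}: on the basic set $\llbracket u\rrbracket$ the function $R_x(\phi)$ is constant with value exactly $\prod_{(k,i)}z_{u(k,i)}^k$, whereas on a basic set $\llbracket u'\rrbracket$ for a different injection $u'$ it takes the value $\prod z_{u'(k,i)}^k$. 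I would then argue that the eigenvalue equation, holding for all $(z_s)$, forces $f_{u,j}$ to be supported (mod $\mu^j_x$) on the union of basic sets $\llbracket u'\rrbracket$ whose degree data agrees with that of $u$ for every admissible choice of $n$; since $u\in P_n(K)$ is an injection, as $n$ grows these are exactly the $\llbracket u'\rrbracket$ obtained from $u$ by permuting coordinates within fibers, i.e.\ the good-permutation translates, giving support inside $\widetilde{\llbracket u\rrbracket\cap K}$. Summing over $u\in P_n(K)$ and using that these basic sets cover $K$, I conclude each $f_j$ is supported on $\widetilde K$, so $f_j\in\widetilde{L^2}(\mu^j_x\res\widetilde K)$.

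The technical heart is showing that the eigenvalue equation, quantified over all $(z_s)_{s\in 2^n}$ and all large $n$, pins down the support to $\widetilde K$ rather than merely to the larger set where some single $\phi$ gives the right constant. The mechanism is that the map $(z_s)\mapsto\big(\prod z_{u(k,i)}^k\big)$ is a character of $\mathbb{S}_n\cong\mathbb{T}^{2^n}$, and two injections $u,u'$ give the same character on all of $\mathbb{S}_n$ precisely when $\sum_{(k,i):u(k,i)=s}k=\sum_{(k,i):u'(k,i)=s}k$ for every $s\in 2^n$; refining $n$ separates any $u'$ not a fiberwise permutation of $u$, since $\llbracket u'\rrbracket$ then meets a different basic set at the next level. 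I expect the main obstacle to be bookkeeping this refinement cleanly across increasing $n$—ensuring that a component $f_{u,j}$ cannot leak mass onto a basic set outside $\widetilde K$ by hiding in the $\mu^j_x$-null diagonals—which is exactly where $K\subseteq C^0_x$ and the concentration of $\mu^j_x$ on $C^0_x$ (compatibility condition (c)) are used, together with Lemma~\ref{L:basper} identifying $\widetilde{\llbracket u\rrbracket}$ as the disjoint union of good-permutation translates.
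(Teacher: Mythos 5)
Your core mechanism is the right one and is essentially the paper's: by Lemma~\ref{L:pfco}, the operator \eqref{E:bigf} acts on each basic set $\llbracket u'\rrbracket$ as multiplication by the character value $\prod_{(k,i)} z_{u'(k,i)}^{k}$, and equality of two such characters for \emph{all} $(z_s)$ forces the two injections to differ by a good permutation (the paper isolates this as an elementary claim about comparing exponents). The paper runs the argument by contradiction at a single admissible level $n$, locating one basic set $\llbracket v\rrbracket$ that carries positive mass of an offending component and comparing characters there, while you argue directly and intersect support constraints over all admissible $n$; both routes work, and your explicit observation that the fixed-vector part of $h$ must vanish (because the characters \eqref{E:bige} are non-trivial) is a point the paper leaves implicit.

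There is, however, a genuine gap at your first step: you ``identify $h$ with a sequence $(f_j)_j$, $f_j\in\widetilde{L^2}(\mu^j_x)$,'' i.e., you decompose $h$ only over the summands belonging to the given $x$. Theorem~\ref{T:rep} decomposes $H_0$ as the $L^2$-sum of $\widetilde{L^2}(\mu^j_{x'})$ over \emph{all} $x'\in{\mathbb N}[{\mathbb Z}^*]$ and $j\in{\mathbb N}$, and proving that an element of $[x,K]^\xi$ has zero component in every summand with $x'\neq x$ is half of the lemma's content (it is case (a) in the paper's proof, and it is precisely the half that later feeds the singularity statements across different $x$'s); your identification assumes it. The fix is contained in your own character criterion, which as stated is domain-agnostic: if a component $f_{u,x',j}\in\widetilde{L^2}(\mu^j_{x'})$ of $h_u$ has positive $\mu^j_{x'}$-mass on some $\llbracket v\rrbracket$ with $v\colon D(x')\to 2^n$ injective, then the componentwise eigenvalue equation together with Lemma~\ref{L:pfco} gives $\prod_{(k,i)\in D(x')} z_{v(k,i)}^{k}=\prod_{(k,i)\in D(x)} z_{u(k,i)}^{k}$ for all $(z_s)$; comparing the exponent of each $z_s$ (all $k$'s are non-zero, $u,v$ injective) yields a bijection $D(x')\to D(x)$ preserving first coordinates, hence $x'=x$ --- so you must state and apply this comparison across all summands, not only within the $x$-block. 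Two smaller corrections: for injections, character equality at a single admissible $n$ already forces the good-permutation relation, so no ``refining of $n$'' is needed for that step; and at a fixed $n$ the eigenvalue equation confines $f_{u,j}$ only to $\widetilde{\llbracket u\rrbracket}$ together with the level-$n$ non-separated region, so the passage to support in $\widetilde{K}$ (modulo the null diagonals) genuinely requires the intersection over all admissible $n$ --- it is not available at one level, as your phrase ``support inside $\widetilde{\llbracket u\rrbracket\cap K}$'' suggests.
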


\begin{proof} This proof is a modification of the argument in \cite[p.\,3118]{Sol}. 
We start with an elementary claim, whose justification we leave to the reader.

\setcounter{claim}{0}

\begin{claim}\label{Cl:1}
Let $a, b, c$ be finite sets with $a, b\subseteq c$, and 
let $(l_\nu)_{\nu\in a}$ and $(m_\nu)_{\nu\in b}$ be sequences of elements of $\mathbb N$.
Assume that, for each sequence $(z_\nu)_{\nu\in c}$ of elements of $\mathbb T$, we have 
\[
\prod_{\nu\in a} z_\nu^{l_\nu}=  \prod_{\nu\in b} z_\nu^{m_\nu}.
\]
Then, for each $r\in {\mathbb N}$, 
\[
\{ \nu\in a\mid l_\nu = r\} = \{ \nu\in b\mid m_\nu = r\}.
\]
\end{claim}

Assume towards a contradiction that the inclusion in the conclusion of the lemma does not hold. 
This means that there is an element
$h$ of $[x,K]^\tau$ such that one of the following two possibilities occurs: 
\begin{enumerate}
\item[(a)] for some $x'\not= x$ and some $j$, the orthogonal projection of $h$ on $\widetilde{L^2}(\mu^j_{x'})$ is
non-zero;

\item[(b)]  for some $j$, the orthogonal projection of $h$ on $\widetilde{L^2}(\mu^j_{x})$ has
support not included in $\widetilde{K}$.
\end{enumerate} 

Fix such an element $h$, and let $A$ be the support of the projection of $h$ as in (a) or (b) above. 
So $A$ is a non-zero $\mu^j_{x'}$-measure 
class of a Borel subset of $C^0_{x'}$ in case (a), and it is non-zero $\mu^j_{x}$-measure class of a Borel subset of $C^0_{x}$ in case (b). 
We view $A$ 
as a Borel set keeping in mind that it is determined by $h$ only up to a measure zero set. With this convention, we make the following claim that combines (a) and (b). 

\begin{claim}\label{Cl:2} 
There are $x'$, $j$, $n$, and an injection $v\colon D(x') \to 2^n$ such that 
$n$ is admissible for $K$, 
\begin{equation}\label{E:possu}
\mu^j_{x'}\big( A\cap \llbracket v\rrbracket \big) >0,
\end{equation}
and either $x'\not= x$ or ($x'=x$ and  $v\circ\delta\not= u$ for all $u\in P_n(K)$ and all good permutations $\delta$ of $D(x)$). 
\end{claim} 

\noindent {\em Proof of Claim~\ref{Cl:2}.} As already remarked, large enough $n$ are admissible for $K$ by compactness of $K$ 
and the inclusion $K\subseteq C^0_x$. 

If (a) holds, we pick $x'$ and $j$ as in (a). Since by Lemma~\ref{L:basi} sets of the form $\llbracket v\rrbracket$,  
for injections $v\colon D(x') \to 2^n$, for large enough $n$, form a topological basis for $C^0_{x'}$, we can find such an injection 
$v$ with \eqref{E:possu} for each large enough $n$. 

If (b) holds, then, using compactness of $K$ and Lemma~\ref{L:basi} again, 
for each large enough $n$, we can find an injection $v\colon D(x') \to 2^n$ such that 
\eqref{E:possu} holds and we have $\llbracket v\rrbracket \cap {\widetilde K}=\emptyset$. This disjointness condition translates to 
$\llbracket v\circ \delta \rrbracket \cap K =\emptyset$, for each good permutations $\delta$ of $D(x)$, which gives 
$v\circ \delta\not\in P_n(K)$, for each good permutations $\delta$ of $D(x)$, and the claim follows. 

\medskip

Fix $x'$, $j$, $n$, and $v$ as in Claim~\ref{Cl:2}. 
Now, since $n$ is admissible for $K$, $h$ is represented as a sum as in \eqref{E:bigs} for this $n$. 
By \eqref{E:possu}, there exists an $h_u$, for some $u\in P_n(K)$, whose orthogonal projection on
$L^2(\mu^{j}_{x'})$ has support intersecting $\llbracket v\rrbracket$ on a set of positive measure with respect to $\mu^j_{x'}$. 
Fix such $u$. 
Since 
$h_u$ is non-zero, it is an eigenvector of the operator in 
\eqref{E:bigf} for each sequence $(z_s\colon s\in 2^n)$. Its eigenvalue for a given $(z_s\colon s\in 2^n)$ must be
equal to
\begin{equation}\label{E:eigc} 
\prod_{(k,i)\in D(x')} z_{v(k,i)}^{k}.
\end{equation}
since, by Lemma~\ref{L:pfco}, every value of each function from $\widetilde{L^2}(\mu^{j}_{x'})$
attained on $\llbracket v\rrbracket$ is multiplied by that
number when the function is acted on by the operator in \eqref{E:bigf}. On the other hand, this
eigenvalue is also equal to \eqref{E:bige} for the $u$ found above. Thus, \eqref{E:bige} and \eqref{E:eigc} are equal to 
each other for each choice of $(z_s\colon s\in 2^n)$. 
Using Claim~\ref{Cl:1} (with $a= u\big(D(x)\big)$, $b=v\big(D(x')\big)$, $c=2^n$, $l_{u(k,i)}= k$, and $m_{v(k,i)}=k$), 
we see that $x'=x$ 
and $v\circ\delta =u$, for some good permutation $\delta$, which is a contradiction.
\end{proof}

\setcounter{claim}{0}

Lemma~\ref{L:co1} below gives a lower estimate on the space $[x,K]^\xi$ for a representation $\xi$ of one 
of the two types defined in Section~\ref{Su:basic}. 

\begin{lemma}\label{L:co1}
Fix $x\in {\mathbb N}[{\mathbb Z}^\times]$ and a compact set $K\subseteq C^0_x$. Let $\mu$ be 
a measure marginally compatible with $x$ that is concentrated on $K$. Let $\xi$ be the representation 
equal to $\rho_x$ on $L^2(\mu)$. Then 
\[
L^2(\mu)\subseteq [x,K]^\xi.
\]
\end{lemma}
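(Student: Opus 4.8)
The plan is to show that every element $f$ of $L^2(\mu)$ satisfies the defining condition of $[x,K]^\xi$, so I must verify that for each admissible $n$ the function $f$ admits a decomposition $f = \sum_{u\in P_n(K)} f_u$ into eigenvectors of the relevant multiplication operators with the correct eigenvalues. The natural candidate for the decomposition is the partition of $L^2(\mu)$ induced by the basic sets: since $\mu$ is concentrated on $K$, for admissible $n$ we have $K \subseteq \bigcup_u \llbracket u\rrbracket$ with $u$ ranging over injections $D(x)\to 2^n$, and these basic sets are pairwise disjoint, so I would set $f_u = f \cdot \chi_{\llbracket u\rrbracket}$ for $u \in P_n(K)$. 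By definition of $P_n(K)$, only the sets $\llbracket u\rrbracket$ meeting $K$ carry any $\mu$-mass, so $f = \sum_{u\in P_n(K)} f_u$ holds $\mu$-almost everywhere.

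The heart of the argument is then the eigenvector claim. For a fixed sequence $(z_s\colon s\in 2^n)$ let $\phi = \sum_{s\in 2^n} z_s\chi_{[s]} \in {\mathbb S}_n$, so that the operator in \eqref{E:bigf} is $\xi(\phi) = \rho_x(\phi)$, which by \eqref{E:pire} is multiplication by $R_x(\phi)$. By Lemma~\ref{L:pfco}, the function $R_x(\phi)$ is constant on $\llbracket u\rrbracket$ with constant value $\prod_{(k,i)\in D(x)} z_{u(k,i)}^k$. Hence $\rho_x(\phi) f_u = R_x(\phi)\, f \chi_{\llbracket u\rrbracket}$ equals that same constant times $f_u$, which is exactly the eigenvalue \eqref{E:bige} required in the definition of $[x,K]^\xi$. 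Since this holds for every choice of $(z_s)$, each $f_u$ lies in the prescribed eigenspace, and therefore $f\in [x,K]^\xi$.

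I would carry the steps out in this order: first record that all sufficiently large $n$ are admissible (already noted in the text via compactness of $K\subseteq C^0_x$) and fix an arbitrary admissible $n$; second, use disjointness of the basic sets together with $\mu$ being concentrated on $K$ to write $f = \sum_{u\in P_n(K)} f\chi_{\llbracket u\rrbracket}$; third, invoke Lemma~\ref{L:pfco} to identify $R_x(\phi)$ as a constant on each $\llbracket u\rrbracket$ and conclude that each summand is an eigenvector with eigenvalue \eqref{E:bige}. There is essentially no serious obstacle here; the one point requiring a little care is the bookkeeping that a given $u\colon D(x)\to 2^n$ contributing to the sum is genuinely an \emph{injection} and that $f_u$ may be taken to vanish outside $\llbracket u\rrbracket$, so that the sum is over precisely $P_n(K)$ and not over a larger index set. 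Because $\mu$ is marginally compatible with $x$ and concentrated on $K\subseteq C^0_x$, the diagonals carry no mass, so the non-injective $u$ contribute nothing and the decomposition is clean.
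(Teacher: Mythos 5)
Your proposal is correct and follows essentially the same route as the paper's proof: decompose $f$ as $\sum_{u\in P_n(K)} f\chi_{\llbracket u\rrbracket}$ using that $\mu$ concentrates on $K\subseteq\bigcup_{u\in P_n(K)}\llbracket u\rrbracket$, then apply Lemma~\ref{L:pfco} to see that each piece is an eigenvector of $\xi(\sum_s z_s\chi_{[s]})$ with eigenvalue \eqref{E:bige}. The closing worry about non-injective $u$ is moot, since basic sets are by definition indexed only by injections, but this does not affect the argument.
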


\begin{proof}
Let $h\in L^2(\mu)$. Fix $n$ that is admissible for $K$, and set 
\[
h_u =  h\res \llbracket u\rrbracket,
\]
for $u\in P_n(K)$. Since $\mu$ concentrates on $K$ and 
\[
K\subseteq \bigcup_{u\in P_n(K)} \llbracket u\rrbracket, 
\]
we see that in $L^2(\mu)$
\begin{equation}\label{E:hdec}
h = \sum_{u\in P_n(K)}h_u.
\end{equation}
Now, take a sequence $(z_s\colon s\in 2^n)$, with $z_s\in {\mathbb T}$, for all $s\in 2^n$, and consider 
the associated with the sequence element 
\[
t= \sum_{s\in 2^n} z_s\chi_{[s]}\in L^0. 
\]
Note that, by the definitions of $\xi$ and $h_u$ and by Lemma~\ref{L:pfco}, we get 
\[
\xi(t)\big(h_u\big) = \big(\prod_{(k,i)\in D(x)} z_{u(k,i)}^{k}\big) h_u.
\]
This equation shows that $h_u$ belongs to the appropriate eigenspace, and the lemma follows by \eqref{E:hdec}. 
\end{proof}

\section{A lemma on independence of random variables}\label{S:indl}  

We assume we are given a boolean action of $L^0$ on a Borel probability space $(X,\gamma)$. 
We fix notation for two unitary representations of $L^0$ associated with the boolean action. 

{\bf Representation $\sigma$.} This is the Koopman representation on $L^2(\gamma)$ induced by the boolean action of $L^0$, that is, 
for $\phi\in L^0$, $\sigma(\phi)$ is the unitary operator on $L^2(\gamma)$, whose value on $f\in L^2(\gamma)$ is given by 
\[
\big(\sigma(\phi)(f)\big)(\omega) = f(\phi^{-1}\omega), \hbox{ for }\omega\in X. 
\]
Let 
\[
H_0(\gamma) = \{ f\in L^2(\gamma)\mid \int_X f\,d\gamma=0\}. 
\]
Then $H_0(\gamma)$ is a closed subspace of $L^2(\gamma)$ that is invariant under $\sigma$. 
Note that by ergodicity of the boolean action of $L^0$ the space $(X,\gamma)$, we have 
\begin{equation}\label{E:gain}
H_0(\gamma) = \big( \{ f\in L^2(\gamma)\mid \sigma(\phi)(f) = f\hbox{ for all }\phi\in L^0\}\big)^\perp. 
\end{equation}

{\bf Representation $\tau$.} This is the representation obtained by applying Theorem~\ref{T:rep} to 
the representation $\sigma$ above. 
So $\tau$ is determined by a sequence of finite Borel measures 
$(\mu^j_x)_{x\in N[{\mathbb Z}^\times], j\in {\mathbb N}}$ such that, for each $x$ and $j$, 
\[
\mu_x^j\hbox{ is compatible with $x$, and }  \mu^{j+1}_x\preceq \mu^j_x. 
\]
The underlying Hilbert space of $\tau$ is the $\ell^2$-sum of the Hilbert spaces $\widetilde{L^2}(\mu^j_x)$, with the sum taken 
over $x\in {\mathbb N}[{\mathbb Z}^\times]$ and $j\in {\mathbb N}$. 
We denote this Hilbert space by $H_\tau$. Representation $\tau$ on $H_\tau$ is then given by formulas 
\eqref{E:pff}. 

By Theorem~\ref{T:rep} and \eqref{E:gain}, there is a Hilbert space isomorphism 
\begin{equation}\label{E:hot}
\Phi\colon H_0(\gamma) \to H_\tau
\end{equation}
that is equivariant between $\sigma$ and $\tau$. For $f\in H_\tau$, we let 
\begin{equation}\label{E:hatt}
{\hat f} = \Phi^{-1}(f). 
\end{equation}

The proof of Lemma~\ref{L:inde} below uses the ergodic theorem for boolean actions of $L^0$, Theorem~\ref{T:ert}. 
We will need the following notion. 
Let $M$ be a set of complex valued functions defined on the same set. By the {\bf $*$-algebra generated by $M$} we understand 
the set of all functions obtained by closing $M$ under addition, multiplication, conjugation, and multiplication by complex scalars. 
Additionally, we adopt the following convention. We write ${\hat f}$ for $\hat {\tilde f}$, 
for $f\in L^2(\mu_y\res \llbracket v\rrbracket )$ with $y\in {\mathbb N}[{\mathbb Z}^\times]$ 
and $v\colon D(y)\to 2^n$,
where $\tilde f$ is as in Lemma~\ref{L:ident} for $U=\llbracket v\rrbracket$.

\begin{lemma}\label{L:inde} 
Let $y_1, \dots, y_k\in {\mathbb N}[{\mathbb Z}^\times]$. For $1\leq i\leq k$, 
let $v_i\colon D(y_i)\to 2^n$ be an injection  and let $F_i$ be a function in the $*$-algebra generated by the set 
\[
\{ \hat{f}\mid f\in L^2(\mu_{y_i}\res \llbracket v_i\rrbracket)\}.
\]
If, for all $1\leq i<j\leq k$, the images of $v_i$ and $v_j$ are disjoint, 
then $F_1, \dots , F_k$ are independent random variables on $(X,\gamma)$. 
\end{lemma}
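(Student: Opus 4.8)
The plan is to reduce the statement to the factorization of mixed moments and then to prove that factorization by an averaging argument whose engine is the mean ergodic theorem, Theorem~\ref{T:ert}. Each $F_i$ lies in the $*$-algebra $\mathcal A_i$ generated by $\{\hat f\mid f\in L^2(\mu_{y_i}\res\llbracket v_i\rrbracket)\}$, and these algebras are closed under products and conjugation, so it suffices to show that for all choices $A_i\in\mathcal A_i$,
\[
\int_X A_1\cdots A_k\,d\gamma=\prod_{i=1}^{k}\int_X A_i\,d\gamma .
\]
By multilinearity I may take each $A_i$ to be a monomial $\prod_r\hat f_{i,r}\cdot\prod_s\overline{\hat g_{i,s}}$ in the generators of family $i$. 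Independence of the $\sigma$-algebras generated by the $F_i$ (hence of $F_1,\dots,F_k$) then follows from this moment factorization by approximating bounded functions of the $F_i$; the only delicate point is integrability of the products, which I would dispose of by a truncation argument together with a routine monotone-class/approximation step.

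The first structural input describes how a generator transforms under $\mathbb S_m$ for $m\ge n$. Writing $t=\sum_{s'\in 2^m}z_{s'}\chi_{[s']}$ and identifying $\mathbb S_m$ with $\prod_{s'\in 2^m}\mathbb T$, I use that $\Phi$ is equivariant between $\sigma$ and $\tau$, that $f\mapsto\hat f$ factors through the symmetrization map of Lemma~\ref{L:ident}, and the explicit values of $R_{y_i}$ on basic sets (Lemma~\ref{L:pfco}): decomposing $\llbracket v_i\rrbracket$ into its level-$m$ pieces $\llbracket w\rrbracket$, on each piece $R_{y_i}(t)$ equals the scalar $\prod_{(k,\ell)}z_{w(k,\ell)}^{k}$ with $w(k,\ell)\in S_i$, where $S_i=\{s'\in 2^m\mid s'\res n\in v_i(D(y_i))\}$. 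Thus $U_t\hat f$ depends on $t$ only through the coordinates $z_{s'}$, $s'\in S_i$, and so every family-$i$ monomial $A_i$ is fixed by every $t$ supported off $S_i$. The hypothesis that the images of the $v_i$ are disjoint makes $S_1,\dots,S_k$ pairwise disjoint, which is exactly what lets the families live on independent coordinates.

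Now I would induct on $k$. Let $G_1\subseteq\mathbb S_m$ be the subgroup of those $t$ supported on $S_1$, with Haar measure $\theta_1$. Since $U_tA_j=A_j$ for $t\in G_1$ and $j\ge 2$ (the transform of $A_j$ depends only on the coordinates in $S_j$, which miss $S_1$), measure preservation of the action and averaging over $G_1$ give
\[
\int_X A_1A_2\cdots A_k\,d\gamma=\int_X\Big(\int_{G_1}U_tA_1\,d\theta_1(t)\Big)\,A_2\cdots A_k\,d\gamma .
\]
The crucial point is that the inner average equals the full mean-ergodic average $A_mA_1$ of \eqref{E:opa}: as $A_1$ is invariant under the complementary torus $\prod_{s'\notin S_1}\mathbb T$, integrating $U_tA_1$ over all of $\mathbb S_m=G_1\times\prod_{s'\notin S_1}\mathbb T$ collapses to integration over $G_1$. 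By ergodicity of the boolean action and Theorem~\ref{T:ert}(ii), $A_mA_1\to\int_X A_1\,d\gamma$ in $L^2(\gamma)$ as $m\to\infty$. Letting $m\to\infty$ in the displayed $L^2$-pairing against $\overline{A_2\cdots A_k}$ yields $\int_X A_1\cdots A_k\,d\gamma=\big(\int_X A_1\,d\gamma\big)\int_X A_2\cdots A_k\,d\gamma$, and the inductive hypothesis completes the factorization.

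The main obstacle, and the hinge of the whole argument, is precisely the identity \emph{localized $G_1$-average $=$ full $\mathbb S_m$-average} and its consequence through Theorem~\ref{T:ert}: this is what permits me to freeze the remaining families, using disjointness of supports, while still invoking the \emph{global} mean ergodic theorem, whose conclusion — convergence to the constant mean — rests on ergodicity of the entire action rather than on any (unavailable) ergodicity of a sub-action. The residual difficulties are technical: the integrability of the products $A_2\cdots A_k$ and the justification of the limit passage, and the upgrade from mixed-moment factorization to genuine independence of the generated $\sigma$-algebras; I would handle both by truncation and a standard approximation argument.
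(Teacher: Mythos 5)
Your dynamical engine is sound, and it is essentially the same engine as the paper's: the generators $\hat f$ of family $i$ are fixed by the Koopman operators $U_t$ for every $t\in{\mathbb S}_m$ that is trivial on the set $S_i$ of coordinates in $2^m$ extending $v_i\big(D(y_i)\big)$; disjointness of the images of the $v_i$ makes the sets $S_i$ pairwise disjoint; the full average \eqref{E:opa} collapses to the average over the subtorus supported on $S_1$; and Theorem~\ref{T:ert} (which, as you correctly stress, uses ergodicity of the whole action) finishes the induction step. The genuine gap is the frame you put around this engine, namely the reduction to factorization of mixed moments of monomials in the $*$-algebra. First, those moments need not exist: the generators are only in $L^2(\gamma)$, so a monomial $A_i$ need not lie in $L^2(\gamma)$, the product $A_1\cdots A_k$ need not lie in $L^1(\gamma)$ (so the moment to be factorized, and the Fubini exchange over $G_1$, are not well defined), and the limit passage pairing $A_mA_1$ against $\overline{A_2\cdots A_k}$ requires $A_2\cdots A_k\in L^2(\gamma)$, which can fail. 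Second, and more fundamentally, even when all mixed moments exist and factorize, moment factorization does not imply independence of unbounded random variables without a moment-determinacy hypothesis of Carleman type, which is unavailable here; and your proposed repair---truncation plus a monotone-class argument---does not close this, because a truncation of $F_i$ is no longer an element of the $*$-algebra, so the factorization you proved for monomials says nothing about the truncated functions.

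The fix is to discard the moment framework and run your averaging argument directly on bounded Borel functions of the $F_i$, for instance on the indicators $\chi_i=\chi_{F_i^{-1}(B_i)}$ for Borel $B_i\subseteq\C$: since each $U_t$ is a composition operator, the invariance $U_tF_i=F_i$ (for $t$ trivial on $S_i$) is inherited by $\psi\circ F_i$ for every Borel $\psi$, so the collapse of the global average to the localized one, the Fubini exchanges (justified via the near-action and Lemma~\ref{L:help}), and the application of Theorem~\ref{T:ert} all go through with every integrand bounded---and the conclusion is then independence itself, not a statement about moments. This is in effect what the paper does: it observes that the almost-everywhere invariance \eqref{E:cons0} persists under the $*$-algebra operations as a pointwise statement (so no integrability is ever needed), passes immediately to indicators of preimages, proves the product formula $A_p(\chi^{12})=A_p(\chi^{1})A_p(\chi^{2})$ almost everywhere via the Haar-measure factorization \eqref{E:ide2} on ${\mathbb S}_p$, and only then applies the mean ergodic theorem to all three bounded functions simultaneously. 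Your one-family-at-a-time induction would work equally well once recast for bounded functions; as written, however, the moment route cannot reach the stated conclusion.
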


\begin{proof}To keep our notation light, we present the proof for $k=2$, and we set 
$F=F_1$, $G=F_2$, $v=v_1$, and $w=v_2$. 

Let $A, B\subseteq {\mathbb C}$ be Borel sets. Our aim is to show that 
\begin{equation}\notag
\gamma\big( F^{-1}(A) \cap G^{-1}(B)\big) = \gamma\big( F^{-1}(A)\big) \,\gamma\big( G^{-1}(B)\big). 
\end{equation} 
If we let $\chi^1$, $\chi^2$, and $\chi^{12}$ be the indicator functions of the sets 
\[
F^{-1}(A),\, G^{-1}(B),\hbox{ and }  F^{-1}(A) \cap G^{-1}(B),
\]
respectively, then we need to prove 
\begin{equation}\label{E:inf}
\int_X \chi^{12}\,d\gamma = \int_X \chi^1\,d\gamma\,\int_X\chi^2\,d\gamma. 
\end{equation}

Let $p\geq n$, where $n$ is as in the statement of the lemma. We associate with $v$ and $w$ sets 
of finite sequences in $2^p$ as follows 
\[
\begin{split}
(v,p) &= \{ s\in 2^p\mid v(k,i)= s\res n,\hbox{ for some }(k,i)\in D(y)\}\\
(w,p) &= \{ s\in 2^p\mid w(k,i)= s\res n, \hbox{ for some }(k,i)\in D(z)\}. 
\end{split} 
\]
Recall that ${\mathbb S}_p = {\mathbb T}^{2^p}$. Define the following closed subgroups of ${\mathbb S}_p$ 
\[
\begin{split}
S_{v,p} &= \{ t\in {\mathbb S}_p\mid t(s)=1 \hbox{ for all } s\in (v,p)\},\\
S_{w,p} &= \{ t\in {\mathbb S}_p\mid t(s)=1 \hbox{ for all } s\in (w,p)\},\\
S_{v,p}^\perp &= \{ t\in {\mathbb S}_p\mid t(s)=1 \hbox{ for all } s\in 2^p\setminus (v,p)\},\\
S_{w,p}^\perp &= \{ t\in {\mathbb S}_p\mid t(s)=1 \hbox{ for all } s\in 2^p\setminus (w,p)\}. 
\end{split}
\]
While, as usual, $\theta$ is the probability Haar measure on ${\mathbb S}_p$, additionally, we let 
\[
\theta_v,\, \theta_w,\, \theta_v^\perp,\, \theta_w^\perp 
\]
be the probability Haar measures on $S_{v,p}$, $S_{w,p}$, $S_{v,p}^\perp$, $S_{w,p}^\perp$, respectively. 

We record an identity, contained in \eqref{E:ide2}, 
that is at the root of the proof of independence of $F$ and $G$. 
Let $A'\subseteq S_{v,p}^\perp$ and $B'\subseteq S_{w,p}^\perp$ be Borel sets. 
First, we view 
${\mathbb S}_p$, both as a topological group and as a measure space 
equipped with $\theta$, as products 
\[
{\mathbb S}_p = S_{v,p}^\perp \times S_{v,p}\;\hbox{ and }\; {\mathbb S}_p = S_{w,p}^\perp \times S_{w,p},
\]
with the groups on the right hand sides equipped with the measures $\theta_v^\perp\times \theta_v$ and 
$\theta_w^\perp\times \theta_w$, respectively. Thus, we have 
\begin{equation}\label{E:ide}
\theta\big(A'\cdot S_{v,p} \big)=\theta_v^\perp(A')\;\hbox{ and }\;
\theta\big(B' \cdot S_{w,p} \big)=\theta_w^\perp(B'). 
\end{equation} 
Further, since the images of the injections $v$ and $w$ are disjoint, we have 
\[
(v,p)\cap (w,p)=\emptyset, 
\]
which allows us to view ${\mathbb S}_p$ also as the product 
\[
{\mathbb S}_p = S_{v,p}^\perp \times S_{w,p}^\perp \times (S_{v,p} \cap S_{w,p}). 
\]
Therefore, we get 
\begin{equation}\notag
\begin{split}
(A' \cdot S_{v,p}) \cap (B' \cdot S_{w,p}) &= A'\cdot B' \cdot (S_{v,p}\cap S_{w,p})\;\hbox{ and }\\
\theta\big(A'\cdot B' \cdot (S_{v,p}\cap S_{w,p}) \big)&=\theta_v^\perp(A')\, \theta_w^\perp(B'). 
\end{split}
\end{equation} 
Putting the two above equalities and \eqref{E:ide} together, we have 
\begin{equation}\label{E:ide2} 
\theta\big( (A' \cdot S_{v,p}) \cap (B' \cdot S_{w,p})\big) 
= \theta\big(A'\cdot S_{v,p} \big)\,\theta\big(B' \cdot S_{w,p} \big).
\end{equation}

Fix $f\in L^2(\mu_y\res \llbracket v\rrbracket)$. 
By 
formula \eqref{E:pff} that defines representation $\tau$ and by Lemma~\ref{L:pfco}, we see that 
\begin{equation}\label{E:noth1}
\tau(t)\big( f\big) = f, \hbox{ for all }t\in S_{v,p}. 
\end{equation}
Since $\Phi$ is equivariant between $\sigma$ and $\tau$, by formula \eqref{E:hatt}, equation \eqref{E:noth1}
implies  
\[
\sigma(t)\big( {\hat f}\big) = {\hat f}, \hbox{ for all }t\in S_{v,p}.
\]
The above equation, the definition of $\sigma$, and the fact that $S_{v,p}$ 
is a group, imply that,  
for each $t\in S_{v,p}$, 
\[
{\hat f}(t\omega) = {\hat f}(\omega), \hbox{ for $\gamma$-almost every $\omega\in X$.}
\]

Applying Lemma~\ref{L:help} with $H= {\mathbb S}_p$ to the above condition, we get that, for each $t\in S_{v,p}$, 
\[
{\hat f}(tt'\omega) = {\hat f}(t'\omega), \hbox{ for $(\gamma\times \theta)$-almost every $(t', \omega)\in {\mathbb S}_p\times X$.}
\]
After applying Fubini's theorem, the condition above gives that, 
for $\gamma$-almost every $\omega\in X$, for $\theta$-almost every $t'\in {\mathbb S}_p$, we have 
\begin{equation}\label{E:cons0}
{\hat f}(tt'\omega) = {\hat f}(t'\omega), \hbox{ for $\theta_v$-almost every }t\in S_{v,p}.
\end{equation}
We proved that condition \eqref{E:cons0} holds for arbitrary 
$f\in L^2\big(\mu_y\res \llbracket v\rrbracket\big)$.
Since this condition persists under taking sums and products, conjugation, 
and multiplication by scalars of functions in $\{ \hat{f}\mid f\in L^2\big(\mu_y\res \llbracket v\rrbracket\big)\}$, we see 
that the following condition holds, for $\gamma$-almost every $\omega\in X$: 
\begin{equation}\label{E:cons}
\begin{split}
\hbox{for }\theta&\hbox{-almost every } t'\in {\mathbb S}_p,\\
F(tt'&\omega) = F(t'\omega), \hbox{ for $\theta_v$-almost every }t\in S_{v,p}. 
\end{split} 
\end{equation}

By the same argument, we get that 
 for $\gamma$-almost every $\omega\in X$, we have 
\begin{equation}\label{E:cons2}
\begin{split}
\hbox{for }\theta&\hbox{-almost every } t'\in {\mathbb S}_p,\\
G(tt'&\omega) = G(t'\omega), \hbox{ for $\theta_w$-almost every }t\in S_{w,p}. 
\end{split} 
\end{equation}

Fix $\omega\in X$ for which both \eqref{E:cons} and \eqref{E:cons2} hold. The set of such points $\omega\in X$ 
has $\gamma$-measure $1$. 
Consider the functions $F_{p,\omega}, G_{p,\omega}\colon {\mathbb S}_p\to {\mathbb C}$ given by 
\[
F_{p,\omega}(t')= F(t'\omega)\;\hbox{ and }\; G_{p,\omega}(t')= G(t'\omega). 
\]
With these definitions, condition \eqref{E:cons} translates to 
\begin{equation}\notag
\begin{split}
\hbox{for }\theta&\hbox{-almost every } t'\in {\mathbb S}_p,\\
F_{p,\omega}&(tt') = F_{p,\omega}(t'), \hbox{ for $\theta_v$-almost every }t\in S_{v,p}, 
\end{split} 
\end{equation}
We view $\theta$ as the product $\theta_v^\perp\times \theta_v$ and apply Fubini's theorem to the statement above and
obtain the following conclusion
\begin{equation}\notag
\begin{split}
\hbox{for }\theta_v^\perp&\hbox{-almost every } t' \in S^\perp_{v,p},\\
S_{v,p} \ni &\, t \to F_{p,\omega}(tt') \hbox{ is constant $\theta_v$-almost everywhere.} 
\end{split} 
\end{equation}
By the above observation, there exists a Borel set $A'\subseteq S^\perp_{v,p}$ such that 
\begin{equation}\label{E:prepr} 
F_{p,\omega}^{-1}(A) = A' \cdot S_{v,p} \hbox{ modulo a $\theta$-measure $0$ subset of }{\mathbb S}_p.
\end{equation} 
By the same argument, except that using \eqref{E:cons2} in place of \eqref{E:cons}, 
there exists a Borel set $B'\subseteq S^\perp_{w,p}$ such that 
\begin{equation}\label{E:prepr2} 
G_{p,\omega}^{-1}(B) = B' \cdot S_{w,p}\hbox{ modulo a $\theta$-measure $0$ subset of }{\mathbb S}_p.
\end{equation} 
By \eqref{E:prepr} and \eqref{E:prepr2}, using \eqref{E:ide2}, we get 
\begin{equation}\label{E:fieq} 
\theta\big( F_{p,\omega}^{-1}(A) \cap G_{p,\omega}^{-1}(B)\big) 
= \theta\big( F_{p,\omega}^{-1}(A)\big)\, \theta\big( G_{p,\omega}^{-1}(B)\big).
\end{equation} 

Let now $\chi^1_{p,\omega}$, $\chi^2_{p,\omega}$, and $\chi^{12}_{p,\omega}$ 
be the indicator functions of the following subsets of ${\mathbb S}_p$ 
\[
F_{p,\omega}^{-1}(A),\; G_{p,\omega}^{-1}(B), \hbox{ and } F_{p,\omega}^{-1}(A)\cap G_{p,\omega}^{-1}(B),
\]
respectively. Equation \eqref{E:fieq} gives 
\begin{equation}\label{E:gaga}
\int_{{\mathbb S}_p} \chi^{12}_{p,\omega}(t)\, d\theta(t ) = 
\int_{{\mathbb S}_p} \chi^1_{p,\omega}(t)\, d\theta(t)\, \int_{{\mathbb S}_p} \chi^2_{p,\omega}(t)\, d\theta(t).
\end{equation} 

We keep in mind that identity \eqref{E:gaga} was proved for all $p\geq n$ and for $\gamma$-almost all $\omega\in X$. 
Let $A_p$, for $p\in {\mathbb N}$, be the operator defined by \eqref{E:opa}. 
We observe that, for all $p$ and $\omega\in X$, 
\[
\int_{{\mathbb S}_p} \chi^1_{p,\omega}(t)\, d\theta(t)  = \int_{{\mathbb S}_p} \chi^1(t\omega) \,d\theta(t) = A_p(\chi^1)(\omega)
\]
and, similarly, 
\[
\int_{{\mathbb S}_p} \chi^2_{p,\omega}(t)\, d\theta(t) = A_p(\chi^2)(\omega)
\;\hbox{ and }\; \int_{{\mathbb S}_p} \chi^{12}_{p,\omega}(t)\, d\theta(t) = A_p(\chi^{12})(\omega).
\]
From the above and from \eqref{E:gaga}, we get that, for all $p\geq n$ and for $\gamma$-almost all points $\omega\in X$, 
\begin{equation}\label{E:prot}
A_p(\chi^{12})(\omega) = A_p(\chi^1)(\omega)\,A_p(\chi^2)(\omega).
\end{equation}

By Theorem~\ref{T:ert}, for $\gamma$-almost all $\omega\in X$, we have
\[
A_p(\chi^{12})(\omega) \to \int_X\chi^{12}\,d\gamma,\; 
A_p(\chi^{1})(\omega) \to \int_X\chi^{1}\,d\gamma,\hbox{ and }\,
A_p(\chi^{2})(\omega) \to \int_X\chi^{2}\,d\gamma,
\]
as $p\to\infty$. From the above and from \eqref{E:prot}, we get \eqref{E:inf} as required. 
\end{proof}

\section{A lemma on sesquilinear Hilbert space maps}\label{S:sesq}

Let $F_1, \dots, F_k$, $F_1', \dots, F_l'$, and $H$ be vector spaces over $\mathbb C$. Following 
\cite{BT}, we call a function 
\begin{equation}\label{E:sesm}
p\colon \prod_{i=1}^k F_i \times \prod_{i=1}^l F_i'\to H
\end{equation} 
a {\bf sesquilinear map} if it is linear in the coordinates $F_1, \dots, F_k$ and semilinear in the coordinates 
$F_1', \dots, F_l'$. A more precise name for such maps would be multi-sesquilinear, but we use the shorter name. Also 
the split into the linear and semilinear coordinates should be reflected in the name, but in all cases below the 
split will be evident from the notation and context. 

The following definitions generalize the notion of equivariant Hilbert space map. 
Assume now that the spaces $F_1, \dots, F_k$, $F_1', \dots, F_l'$, and $H$ are Hilbert spaces with inner products 
$\langle \cdot, \cdot\rangle_1, \dots, \langle \cdot, \cdot\rangle_k$, $\langle \cdot, \cdot\rangle'_1, \dots, \langle \cdot, \cdot\rangle'_l$, 
and 
$\langle \cdot, \cdot\rangle$, respectively. 
A function $p$ as in \eqref{E:sesm} is called a 
{\bf sesquilinear Hilbert space map} if it is sesquilinear and, 
for all $f_1, g_1\in F_1, \dots, f_k, g_k \in  F_k$ and $f'_1, g_1'\in F'_1, \dots, f'_l, g_l' \in  F'_l$, we have 
\[
\big\langle p(f_1, \dots, f_l'), p(g_1,\dots , g_l')\big\rangle  = 
\prod_{i=1}^k\langle f_i, g_i\rangle_i \,\prod_{i=1}^l \overline{ \langle f_i',  g_i'\rangle_i}.
\]
Let $\rho_1, \dots, \rho_k$, $\rho'_1, \dots, \rho'_l$, and $\xi$ be unitary representations of $L^0$ on the Hilbert spaces 
$F_1, \dots, F_k$, $F_1', \dots, F_l'$, and $H$, respectively. 
We say that $p$ is {\bf equivariant between $\big((\rho_i)_{i=1}^k, (\rho_i')_{i=1}^l\big)$ and $\xi$} if, for each $\phi\in L^0$ and 
$f_1\in F_1, \dots, f_k\in F_k$ and $f_1'\in F_1', \dots, f_l'\in F_l'$, we have 
\begin{equation}\label{E:peqi}
p\Big(\rho_1(\phi)\big( f_1\big),\dots,   \rho_l'(\phi)\big( f_l'\big)\Big) = \xi(\phi)\big( p(f_1, \dots, f_l')\big). 
\end{equation} 

One could hope not to have to distinguish two types of coordinates, $\prod_{i=1}^k F_i$ and $\prod_{i=1}^l F_i'$, 
by precomposing a sesquilinear map $p$ as above 
with the canonical semilinear bar maps from the complex conjugates of $F_1', \dots, F_l'$ to 
$F_1', \dots, F_l'$; see \cite[Section~3.3]{BT}. 
After such a move, $p$ would become a multilinear map with one type of coordinates; however, after proceeding in this way, 
the equivariance condition 
\eqref{E:peqi} would change in the coordinates $F_1', \dots, F_l'$ and not in the coordinates $F_1, \dots, F_k$ 
keeping the two sets of coordinates distinct.

As in Section~\ref{S:indl}, we assume we are given a boolean action of $L^0$ on the Borel probability space $(X,\gamma)$. 
Also, as in Section~\ref{S:indl}, 
we consider the two unitary representations of $L^0$ associated with the boolean action: 
the Koopman representation $\sigma$ on $H_0(\gamma)$ and 
the representation $\tau$ on $H_\tau$, where $H_\tau$ is the $\ell^2$-sum of the Hilbert spaces $\widetilde{L^2}(\mu^j_x)$, 
with the sum taken over $x\in {\mathbb N}[{\mathbb Z}^\times]$ and $j\in {\mathbb N}$. The two representations 
are isomorphic by the map 
\[
H_\tau \ni f\to {\hat f} \in H_0(\gamma)
\] 
as in \eqref{E:hatt}.

We can now state the result of this section giving a sesquilinear Hilbert space map. 
The lemma below will be used in two special cases, $k=2, l=0$ and $k=0, l=1$, 
but there is little harm in combining both cases into one statement.

\begin{lemma}\label{L:exmul} 
For $1\leq i\leq k$ and $1\leq j\leq l$, let $y_i, z_j\in {\mathbb N}[{\mathbb Z}^\times]$ and 
let $v_i\colon D(y_i)\to 2^n$ and $w_j \colon D(z_j)\to 2^n$ be injections. 
Assume that the images of $v_i$ and $v_j$ are disjoint, for distinct $i,\, j$, as are the images of 
$w_i$, $w_j$, for distinct $i,\, j$, and the images of $v_i$ and $w_j$ for all $i,\, j$. 
Then there exits a sesquilinear Hilbert space map 
\[
p\colon \prod_{i=1}^k L^2(\mu_{y_i} \res \llbracket v_i\rrbracket )\times 
\prod_{i=1}^l L^2(\mu_{z_i} \res \llbracket w_i\rrbracket ) \to H_0(\gamma)
\]
that is equivariant between $\big( (\rho_{y_i})_{i=1}^k, (\rho_{z_i})_{i=1}^l\big)$ on the product 
$\prod_{i=1}^k L^2(\mu_{y_i} \res \llbracket v_i\rrbracket )\times 
\prod_{i=1}^l L^2(\mu_{z_i} \res \llbracket w_i\rrbracket)$ 
and $\sigma$ on $H_0(\gamma)$. 
\end{lemma}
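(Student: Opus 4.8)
The plan is to build $p$ by pointwise multiplication and to extract every required property — well-definedness (i.e. landing in $H_0(\gamma)$), the Hilbert-space-map identity, and equivariance — from the independence furnished by Lemma~\ref{L:inde}. Writing $\vec{f} = (f_1,\dots,f_k,f_1',\dots,f_l')$ for a typical element of the domain, I would set
\[
p(\vec{f}) = \Big(\prod_{i=1}^k \hat{f_i}\Big)\Big(\prod_{j=1}^l \overline{\hat{f_j'}}\Big),
\]
where $\hat{f_i}$ and $\hat{f_j'}$ are formed by the convention preceding Lemma~\ref{L:inde}, that is $\hat{f}=\Phi^{-1}(\tilde f)$ with $\tilde f$ as in Lemma~\ref{L:ident} for $U=\llbracket v_i\rrbracket$ (resp.\ $\llbracket w_j\rrbracket$). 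Sesquilinearity is then immediate: $f\mapsto \tilde f$ and $\Phi^{-1}$ are linear, pointwise multiplication is linear in each factor, and the conjugation makes the $f_j'$-coordinates semilinear.

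The first substantive point is that $p(\vec f)$ actually lands in $H_0(\gamma)$. The disjointness of the images of $v_1,\dots,v_k,w_1,\dots,w_l$ is precisely the hypothesis of Lemma~\ref{L:inde}, which makes $\hat{f_1},\dots,\hat{f_k},\hat{f_1'},\dots,\hat{f_l'}$, and indeed any functions of them lying in the relevant $*$-algebras, independent random variables on $(X,\gamma)$. Applying this to $|\hat{f_i}|^2$ and $|\hat{f_j'}|^2$ yields
\[
\int_X |p(\vec f)|^2\,d\gamma = \prod_{i=1}^k \int_X |\hat{f_i}|^2\,d\gamma \,\prod_{j=1}^l \int_X |\hat{f_j'}|^2\,d\gamma = \prod_{i=1}^k \|f_i\|_i^2 \,\prod_{j=1}^l \|f_j'\|_j^2 < \infty,
\]
where the middle equality uses that $\Phi^{-1}$ and $f\mapsto\tilde f$ are Hilbert space maps, so $\|\hat f\|_{L^2(\gamma)}=\|f\|$. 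Hence $p(\vec f)\in L^2(\gamma)$; and since each $\hat{f_i},\overline{\hat{f_j'}}$ is integrable with zero integral (being in $H_0(\gamma)$) and they are independent, $\int_X p(\vec f)\,d\gamma = \prod_i \int_X \hat{f_i}\,d\gamma \,\prod_j \int_X \overline{\hat{f_j'}}\,d\gamma = 0$, so $p(\vec f)\in H_0(\gamma)$.

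The Hilbert-space-map identity is the same computation at the level of inner products. Expanding
\[
\langle p(\vec f), p(\vec g)\rangle = \int_X \prod_{i=1}^k \big(\hat{f_i}\,\overline{\hat{g_i}}\big)\,\prod_{j=1}^l \big(\overline{\hat{f_j'}}\,\hat{g_j'}\big)\,d\gamma,
\]
I would note that each factor $\hat{f_i}\overline{\hat{g_i}}$ and $\overline{\hat{f_j'}}\hat{g_j'}$ lies in the appropriate $*$-algebra, so Lemma~\ref{L:inde} factors the integral as $\prod_i \langle \hat{f_i},\hat{g_i}\rangle \prod_j \overline{\langle \hat{f_j'},\hat{g_j'}\rangle}$; using once more that $\Phi^{-1}$ and $f\mapsto\tilde f$ preserve inner products, this equals $\prod_i \langle f_i,g_i\rangle_i \prod_j \overline{\langle f_j',g_j'\rangle_j}$, exactly as required.

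For equivariance I would first record the single-variable intertwining $\widehat{\rho_{y_i}(\phi)f_i}=\sigma(\phi)\hat{f_i}$, which chains three facts: the equivariance of $f\mapsto\tilde f$ (Lemma~\ref{L:ident}), the fact that $\tau$ acts as $\rho_{y_i}$ on the summand $\widetilde{L^2}(\mu^1_{y_i})$ by formula \eqref{E:pff} (recall $\mu_{y_i}=\mu^1_{y_i}$), and the equivariance of $\Phi^{-1}$ between $\tau$ and $\sigma$. Substituting this into the definition of $p$ and using that the Koopman operator $\sigma(\phi)$ is substitution by $\phi^{-1}$ — hence multiplicative and commuting with conjugation pointwise — gives $p(\rho_{y_1}(\phi)f_1,\dots,\rho_{z_l}(\phi)f_l') = \prod_i \sigma(\phi)\hat{f_i}\,\prod_j \overline{\sigma(\phi)\hat{f_j'}} = \sigma(\phi)\big(p(\vec f)\big)$, which is \eqref{E:peqi}. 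The main obstacle throughout is integrability: a product of $L^2$ functions need not be $L^2$, and it is exactly the independence of Lemma~\ref{L:inde} (and behind it the mean ergodic theorem, Theorem~\ref{T:ert}) that justifies every factorization and keeps $p(\vec f)$ inside $H_0(\gamma)$.
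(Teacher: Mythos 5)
Your proposal is correct and follows essentially the same route as the paper's proof: define $p$ as the pointwise product of the hatted functions (conjugated in the semilinear coordinates), invoke Lemma~\ref{L:inde} for the independence that yields integrability, the norm identity, and the factorization of inner products, and obtain equivariance from the equivariance of $f\mapsto\tilde f$ and $\Phi^{-1}$ together with the multiplicativity of the Koopman operator $\sigma(\phi)$. The only difference is cosmetic: you carry out the argument for general $k,l$, whereas the paper writes it for $k=l=1$ and notes the general case is analogous.
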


\begin{proof} 
Again, we write out the proof only for $k=l=1$. We set $y=y_1$, $z=z_1$, $v=v_1$, and $w=w_1$. 

For $f\in L^2(\mu_y\res \llbracket v\rrbracket )$, we write ${\hat f}$ for $\hat {\tilde f}$, and similarly 
for $g\in L^2(\mu_z\res \llbracket w\rrbracket )$, we write ${\hat g}$ for $\hat{\tilde g}$, 
where $\tilde f$ and $\tilde g$ are as in Lemma~\ref{L:ident} for $U=\llbracket v\rrbracket$ and $U=\llbracket w\rrbracket$, 
respectively. 

Note that since $v$ and $w$ have disjoint images, 
it follows from Lemma~\ref{L:inde} that, for $f\in L^2(\mu_y\res \llbracket v\rrbracket)$ and $g\in L^2(\mu_z\res \llbracket w\rrbracket)$, 
the functions 
${\hat f}\in H_0(\gamma)$ and ${\hat g}\in H_0(\gamma)$ are independent random variables on $(X,\gamma)$. Therefore, we have 
\[
{\hat f}\, \overline{{\hat g}}\in L^2(\gamma); 
\]
in fact, 
\[
\| {\hat f}\, \overline{{\hat g}}\|^2 = \int_X | {\hat f}|^2 |\overline{{\hat g}}|^2\,d\gamma = \int_X | {\hat f}|^2\, d\gamma\,\int_X |{\hat g}|^2\,d\gamma = 
\| {\hat f}\|^2 \|{\hat g}\|^2. 
\]
Furthermore, 
\[
\int_X {\hat f} \overline{{\hat g}}\,d\gamma = \int_X {\hat f}\, d\gamma\,\int_X \overline{{\hat g}}\,d\gamma =0, 
\]
so 
\begin{equation}\label{E:indfi} 
{\hat f}\, \overline{{\hat g}}\in H_0(\gamma). 
\end{equation}
In view of \eqref{E:indfi}, we consider the map 
\[
p\colon L^2(\mu_y\res \llbracket v\rrbracket )\times L^2\big(\mu_z\res \llbracket w\rrbracket\big)\to H_0(\gamma).
\]
given by 
\[
p(f,g) = {\hat f}\, \overline{\hat{g}}.
\]

The map $p$ is linear in the first coordinate and semilinear in the second one since the maps 
\begin{equation}\label{E:twoma}
L^2(\mu_y\res \llbracket v\rrbracket )\ni f\to {\hat f}\in H_0(\gamma) \; \hbox{ and } \;
L^2\big(\mu_z\res \llbracket w\rrbracket\big)\ni g\to {\hat g}\in H_0(\gamma) 
\end{equation} 
are linear. Further, we claim that 
\[
\big\langle p(f_1,g_1), p(f_2,g_2)\big\rangle = \big\langle f_1, f_2\big\rangle\, \overline{\big\langle g_1, g_2\big\rangle},
\]
for all $f_1, f_2\in L^2(\mu_y\res \llbracket v\rrbracket )$ and $g_1, g_2\in L^2\big(\mu_z\res \llbracket w\rrbracket\big)$. 
To check the above identity, we compute 
\[
\begin{split} 
\big\langle p(f_1,g_1), p(f_2,g_2)\big\rangle &=
\int_X \hat{f_1} \overline{\hat{g_1}}\, \overline{\hat{f_2}\overline{\hat{g_2}}} \,d\gamma
= \int_X \hat{f_1} \overline{\hat{f_2}} \,\overline{ \hat{g_1} \overline{\hat{g_2}}} \,d\gamma\\
&= \int_X \hat{f_1} \overline{\hat{f_2}} \, d\gamma \int_X\overline{ \hat{g_1} \overline{\hat{g_2}}}\,d\gamma
= \big\langle f_1, f_2\big\rangle\, \overline{\big\langle g_1, g_2 \big\rangle}, 
\end{split} 
\]
where the finiteness of the first integral follows from \eqref{E:indfi}. 
The fourth equality holds since the maps \eqref{E:twoma} 
are Hilbert space maps. 
To justify the third equality, note that the functions 
$\hat{f_1} \overline{\hat{f_2}}$ and $\overline{\hat{g_1} \overline{\hat{g_2}}}$ 
are in $L^1(\gamma)$ and they are independent random variables on $(X,\gamma)$ by Lemma~\ref{L:inde} since $v$ and $w$ 
have disjoint images. Since the second equality is obvious, we proved that $p$ is a sesquilinear Hilbert space map. 

To check equivariance, note that since $\sigma$ is a Koopman representation, we have
\[
\sigma(\phi)\big( {\hat f}\,\overline{{\hat g}}\big) = \sigma(\phi)\big( {\hat f}\big)\, \overline{\sigma(\phi)\big({\hat g}\big)},
\]
for all $\phi\in L^0$ and $f\in L^2(\mu_y\res \llbracket v\rrbracket )$, and $g\in L^2\big(\mu_z\res \llbracket w\rrbracket\big)$. 
Thus, 
\[
\sigma(\phi)\big(p(f,g)\big) = \sigma(\phi)\big( {\hat f}\big)\, \overline{\sigma(\phi)\big({\hat g}\big)} = \widehat{ \rho_y(\phi)\big( f\big)}\, 
\overline{\widehat{ \rho_z(\phi)\big( g\big)}} = p\big( \rho_y(\phi)\big( f\big), \rho_z(\phi)\big( g\big)\big),
\]
where the second equality holds by the equivariance of the maps in \eqref{E:twoma}. The equivariance of $p$ follows from 
the above equation. 
\end{proof}

\section{A lemma on tensor products}\label{S:tens}

We will need the general Lemma~\ref{L:tenor} below that identifies the tensor product of  
$L^2(\mu_1), \dots, L^2(\mu_k)$ and $L^2(\nu_1), \dots , L^2(\nu_l)$ as $L^2(\prod_{i=1}^k \mu_i\times \prod_{I=1}^l \nu_i)$, 
for finite Borel measures $\mu_1, \dots, \mu_k$ and $\nu_1, \dots, \nu_l$, in a category we are interested in. It is a consequence 
of the standard development of the tensor product of Hilbert spaces as in \cite[Appendix E]{Jan}. The two 
special cases of it that will be used are $k=2, l=0$ and $k=0, l=1$. 

Define the canonical map 
\begin{equation}\label{E:seca}
q\colon \prod_{i=1}^kL^2(\mu_i)\times \prod_{i=1}^l L^2(\nu_i)\to L^2\big(\prod_{i=1}^k \mu_i\times \prod_{i=1}^l \nu_i\big),
\end{equation} 
by letting, for $f_i\in L^2(\mu_i)$, for $1\leq i\leq k$, and $g_i\in L^2(\nu_i)$, for $1\leq i\leq l$, 
\[
q\big(f_1, \dots ,g_l\big)(\alpha_1, \dots, \beta_l)= \prod_{i=1}^k f_i(\alpha_i)\prod_{i=1}^l \overline{g_i(\beta_i)}. 
\]
Then it is routine to check that $q$ is a sesquilinear Hilbert space map.

\begin{lemma}\label{L:tenor} 
Let $\mu_i$, for $1\leq i\leq k$, and $\nu_i$, for $1\leq i\leq l$, be finite Borel measures. 
Let $H$ be a Hilbert space whose inner product is $\langle\cdot, \cdot\rangle_H$. Then,  
for each sesquilinear Hilbert space map,  
\[
p\colon \prod_{i=1}^kL^2(\mu_i)\times \prod_{i=1}^l L^2(\nu_i) \to H,
\]
there exists a Hilbert space map  
\[
r\colon  L^2\big(\prod_{i=1}^k \mu_i\times \prod_{i=1}^l \nu_i\big)  \to H
\]
with 
\[
p=r\circ q.
\]
\end{lemma}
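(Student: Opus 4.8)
The plan is to obtain $r$ as the unique continuous extension of the map that sends each elementary tensor $q(f_1,\dots,g_l)$ to $p(f_1,\dots,g_l)$, defined first on the linear span of the image of $q$. Two ingredients drive the argument: the linear span of the image of $q$ is dense in $L^2\big(\prod_{i=1}^k\mu_i\times\prod_{i=1}^l\nu_i\big)$, and the maps $p$ and $q$ induce the very same inner product on elementary tensors, which is exactly the content of both being sesquilinear Hilbert space maps.

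First I would record the Gram identity. Since $q$ is a sesquilinear Hilbert space map, for all $f_i,f_i'\in L^2(\mu_i)$ and $g_i,g_i'\in L^2(\nu_i)$ we have
\[
\big\langle q(f_1,\dots,g_l),\,q(f_1',\dots,g_l')\big\rangle=\prod_{i=1}^k\langle f_i,f_i'\rangle\prod_{i=1}^l\overline{\langle g_i,g_i'\rangle},
\]
and, by the hypothesis on $p$, the right-hand side also equals $\big\langle p(f_1,\dots,g_l),\,p(f_1',\dots,g_l')\big\rangle$. Writing $\mathbf a=(f_1,\dots,f_k,g_1,\dots,g_l)$ for an argument tuple, so that $q(\mathbf a)$ and $p(\mathbf a)$ denote the corresponding elementary tensor and its $p$-value, I would define $r_0$ on the span of the image of $q$ by $r_0\big(\sum_m c_m\,q(\mathbf a_m)\big)=\sum_m c_m\,p(\mathbf a_m)$. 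This is well defined and isometric because, by the Gram identity applied term by term,
\[
\Big\|\sum_m c_m\,p(\mathbf a_m)\Big\|^2=\sum_{m,m'}c_m\overline{c_{m'}}\big\langle q(\mathbf a_m),q(\mathbf a_{m'})\big\rangle=\Big\|\sum_m c_m\,q(\mathbf a_m)\Big\|^2,
\]
so a vanishing $q$-combination forces the corresponding $p$-combination to vanish, and the same computation (polarized) shows $r_0$ preserves inner products.

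Next I would invoke density: linear combinations of the functions $(\alpha_1,\dots,\beta_l)\mapsto\prod_i f_i(\alpha_i)\prod_i\overline{g_i(\beta_i)}$ are dense in $L^2$ of the product measure, since indicator functions of measurable boxes already span a dense subspace and conjugating the last $l$ factors is harmless as each $L^2(\nu_i)$ is closed under conjugation. Being a linear isometry defined on a dense subspace, $r_0$ extends uniquely to an isometric linear map $r$ on all of $L^2\big(\prod_{i=1}^k\mu_i\times\prod_{i=1}^l\nu_i\big)$, and continuity of the inner product propagates inner-product preservation to the whole space, so $r$ is a Hilbert space map; by construction $r\circ q=p$.

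I expect no serious obstacle: the single non-formal input is the density statement, equivalently the identification of $L^2$ of the product measure with the Hilbert tensor product $\bigotimes_i L^2(\mu_i)\otimes\bigotimes_i\overline{L^2(\nu_i)}$ as in \cite[Appendix~E]{Jan}, and the remainder is the routine verification that a map respecting the Gram structure on a spanning family of elementary tensors extends to a Hilbert space map --- that is, the universal property of the tensor product in the category of Hilbert space maps.
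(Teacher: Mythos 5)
Your proof is correct, and its skeleton---define the map on the linear span of the elementary tensors in the image of $q$, check that it preserves inner products there, then extend by continuity using density---is the same as the paper's. The genuine difference lies in how well-definedness of the map on that span is secured. The paper first precomposes $p$ and $q$ with the conjugation bijection $(f,g)\mapsto(f,\bar g)$ so as to work with bilinear maps, and then invokes the algebraic tensor product machinery of \cite[Appendix~E]{Jan} (Definitions E1, E7 and Examples E6, E10) to obtain a linear $r$ on the span satisfying $p'=r\circ q'$; only afterwards does it compute that this $r$ is isometric. You instead get well-definedness directly from the Gram identity: a vanishing $q$-combination has vanishing norm, hence, by the term-by-term equality of the Gram matrices of $p$ and $q$, the corresponding $p$-combination has vanishing norm and therefore vanishes. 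This makes your argument self-contained: no appeal to the universal property of the algebraic tensor product is needed, and the conjugation precomposition becomes superfluous because the conjugates are already absorbed into the Gram identity. What the paper's route buys is a conceptual factorization through the tensor product with the algebraic bookkeeping delegated to a reference; what yours buys is elementariness, the only point requiring care being the (correct) observation that the difference of two representations of the same element of the span is again a linear combination of elementary tensors, so that the norm computation indeed settles well-definedness.
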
 

\begin{proof} We write out the proof for $k=l=1$ only. We set $\mu=\mu_1$ and $\nu=\nu_1$. 
We denote the inner products in the spaces 
$L^2(\mu)$, $L^2(\nu)$, $L^2(\mu\times \nu)$
by 
\[
\langle \cdot, \cdot\rangle_\mu,\; \langle \cdot, \cdot\rangle_\nu, \; \langle \cdot, \cdot\rangle_{\mu\times\nu},
\]
respectively.

We precompose $p$ and $q$ with the bijection 
\begin{equation}\label{E:seco}
L^2(\mu)\times L^2(\nu)\ni (f,g) \to (f, \overline{g}) \in L^2(\mu)\times L^2(\nu), 
\end{equation} 
which is linear in the fist coordinate and semilinear in the second one. 
As a result we obtain bilinear maps $p'$ and $q'$. 
Let $L$ be the linear span in $L^2(\mu\times \nu)$ 
of the image of $q'$. We note that $L$ is a dense subspace of $L^2(\mu\times \nu)$. 
By \cite[Definitions~E1, E7 and Examples~E6, E10]{Jan}, there exists a linear function $r\colon L\to H$ such that 
\begin{equation}\label{E:agree}
p'=r\circ q'.
\end{equation}
Using identity \eqref{E:agree} and the fact that $p'$ and $q'$ are bilinear Hilbert space maps, we calculate, for $f_1, f_2\in L^2(\mu)$ 
and $g_1, g_2\in L^2(\nu)$, 
\[
\begin{split}
\big\langle r\big(q'(f_1, g_1)\big), r\big(q'(f_2,g_2)\big)\big\rangle_H 
&=\big\langle p'(f_1, g_1), p'(f_2,g_2)\big\rangle_H\\ 
&= \langle f_1, f_2\rangle_\mu \langle g_1, g_2\rangle_\nu 
=\big\langle q'(f_1, g_1), q'(f_2,g_2)\big\rangle_{\mu\times\nu}.
\end{split}
\]
By linearity, the above identity extends to 
\begin{equation}\label{E:hilma}
\big\langle r(e_1), r(e_2)\big\rangle_H = \langle e_1, e_2\rangle_{\mu\times\nu}, \hbox{ for all }e_1, e_2\in L. 
\end{equation} 
This, in particular, means that $r$ is an isometry and, therefore, by density of $L$, 
it extends to a continuous linear map defined on the whole space 
$L^2(\mu\times \nu)$. We denote this extension again by $r$, so we still have \eqref{E:agree}, which, by the bijectivity 
of the map in \eqref{E:seco}, gives 
\[
p=r\circ q. 
\]
By continuity of $r$ and density of $L$, we see that \eqref{E:hilma} holds for all $e_1, e_2\in L^2(\mu\times \nu)$, as required. 
\end{proof}

\section{Two representations of $L^0(\lambda, {\mathbb T})$ and a Hilbert space map}\label{S:more}

In addition to the unitary representations $\sigma$ and $\tau$ defined in Section~\ref{S:indl}, 
the following unitary representation will be used. 

{\bf Representations $\rho_{U,{\bar \iota}}$ and $\rho_{U,e}$.} 
Let $y, z\in {\mathbb N}[{\mathbb Z}^\times]$. Let $U$ be a basic set for $y\oplus z$ and let 
$\bar\iota$ be as in \eqref{E:seq} for $y$ and $z$. 
Then, $\bar\iota$ gives rise to the homeomorphism 
\begin{equation}\notag
h_{\bar\iota}\colon C_y\times C_{z} \to  C_{y\oplus z}.
\end{equation}
as in \eqref{E:hom}. 
Let 
\begin{equation}\label{E:meh}
\mu_{U,{\bar\iota}} = \big(\big(h_{\bar\iota}\big)_*(\mu_y\times \mu_z)\big)\res U. 
\end{equation}
By Lemma~\ref{L:compa}\,(i),  
$\mu_{U,{\bar\iota}}$ is a finite measure on $C^0_{y\oplus z}$ that is marginally compatible with $y\oplus z$. 
This observation allows us to consider the representation $\rho_{U, {\bar\iota}}$ equal to $\rho_{y\oplus z}$ on 
$L^2(\mu_{U,{\bar\iota}})$, that is,  $\rho_{U, {\bar\iota}}$ is given by 
\begin{equation}\notag
L^0\times L^2(\mu_{U, {\bar\iota}}) \ni (\phi, f) 
\to\rho_{y\oplus z}(\phi)\big( f\big) \in L^2(\mu_{U, {\bar\iota}}). 
\end{equation}

Let $z\in {\mathbb N}[{\mathbb Z}^\times]$. Let $U$ be a basic set for $(-1)z$, and let 
\begin{equation}\notag
e\colon C_z\to  C_{(-1)z}, 
\end{equation}
be the homeomorphism as in \eqref{E:hom2} with $\ell=-1$. Put 
\begin{equation}\label{E:mee}
\mu_{U,e} = e_*(\mu_z)\res U. 
\end{equation}
By Lemma~\ref{L:compa2},  
$\mu_{U,{\bar\iota}}$ is a finite measure on $C^0_{(-1)z}$ that is marginally compatible with $(-1)z$. 
As above, we can now consider the representation $\rho_{U, e}$ equal to $\rho_{(-1)z}$ on 
$L^2(\mu_{U,{\bar\iota}})$, that is, 
\begin{equation}\notag
L^0\times L^2(\mu_{U, e}) \ni (\phi, f) 
\to\rho_{(-1)z}(\phi)\big( f\big) \in L^2(\mu_{U, e}). 
\end{equation}

The following lemma gives the desired connections between the representations $\rho_{U,{\bar\iota}}$ 
$\rho_{U,e}$
and $\sigma$. It uses the work from Sections~\ref{S:indl}, \ref{S:sesq}, and \ref{S:tens}.

\begin{lemma}\label{L:emps} 
\begin{enumerate} 
\item[(i)] Let $U$ and $\bar\iota$ be as in the definition of $\rho_{U,{\bar\iota}}$. There is a Hilbert space map  
\begin{equation}\notag
\Psi_{U, {\bar\iota}}: L^2(\mu_{U, {\bar\iota}})\to H_0(\gamma) 
\end{equation} 
that is equivariant between $\rho_{U, {\bar\iota}}$ and $\sigma$. 

\item[(ii)]  Let $U$ and $e$ be as in the definition of $\rho_{U,e}$. There is a Hilbert space map  
\begin{equation}\notag
\Psi_{U,e} : L^2(\mu_{U, e})\to H_0(\gamma) 
\end{equation} 
that is equivariant between $\rho_{U, e}$ and $\sigma$. 
\end{enumerate}
\end{lemma}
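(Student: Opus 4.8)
The plan is to build each map by threading together the sesquilinear map of Lemma~\ref{L:exmul}, its linearization from Lemma~\ref{L:tenor}, and a pullback along the relevant homeomorphism ($h_{\bar\iota}$ in part (i), $e$ in part (ii)), and then to verify equivariance by matching representations through Lemma~\ref{L:idio}. For part (i), I would write $U=\llbracket u\rrbracket$ for an injection $u\colon D(y\oplus z)\to 2^n$ and $\bar\iota=(\iota^y,\iota^z)$, and set $v=u\circ\iota^y$ and $w=u\circ\iota^z$. Since $\iota^y,\iota^z$ have disjoint images and $u$ is injective, $v\colon D(y)\to 2^n$ and $w\colon D(z)\to 2^n$ are injections with disjoint images, so Lemma~\ref{L:exmul} (with $k=2,\ l=0$) yields a sesquilinear Hilbert space map $p\colon L^2(\mu_y\res\llbracket v\rrbracket)\times L^2(\mu_z\res\llbracket w\rrbracket)\to H_0(\gamma)$ equivariant between $\big((\rho_y,\rho_z),()\big)$ and $\sigma$, and Lemma~\ref{L:tenor} provides a Hilbert space map $r\colon L^2(\mu_y\res\llbracket v\rrbracket\times\mu_z\res\llbracket w\rrbracket)\to H_0(\gamma)$ with $p=r\circ q$.

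Next comes the measure bookkeeping. By Lemma~\ref{L:intp} we have $h_{\bar\iota}^{-1}(U)=\llbracket v\rrbracket\times\llbracket w\rrbracket$, so the definition \eqref{E:meh} of $\mu_{U,\bar\iota}$ gives $\mu_{U,\bar\iota}=(h_{\bar\iota})_*\big(\mu_y\res\llbracket v\rrbracket\times\mu_z\res\llbracket w\rrbracket\big)$. Hence $F\mapsto F\circ h_{\bar\iota}$ is a unitary, and thus a Hilbert space map $T\colon L^2(\mu_{U,\bar\iota})\to L^2(\mu_y\res\llbracket v\rrbracket\times\mu_z\res\llbracket w\rrbracket)$; I then set $\Psi_{U,\bar\iota}=r\circ T$, which is a Hilbert space map as a composition of such. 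For equivariance, the computation $T(\rho_{y\oplus z}(\phi)F)=(R_{y\oplus z}(\phi)\circ h_{\bar\iota})\cdot(F\circ h_{\bar\iota})$ together with Lemma~\ref{L:idio}(i) shows that $T$ intertwines $\rho_{y\oplus z}$ with the product action given by multiplication by the function $(\alpha,\beta)\mapsto R_y(\phi)(\alpha)R_z(\phi)(\beta)$. This is exactly the action intertwined by the canonical map $q$ with $\big((\rho_y,\rho_z)\big)$; substituting into $p=r\circ q$ and using the equivariance of $p$ shows that $r$ intertwines the product action with $\sigma$ on the linear span of the image of $q$, hence on all of $L^2$ by density and the continuity of $r$, of the unitaries, and of $\sigma(\phi)$. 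Composing, $\Psi_{U,\bar\iota}$ is equivariant between $\rho_{U,\bar\iota}$ and $\sigma$.

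Part (ii) is parallel with $k=0,\ l=1$. Writing $U=\llbracket u\rrbracket$ for $u\colon D((-1)z)\to 2^n$, a direct computation from the definition of $e=e_{z,-1}$ gives $e^{-1}(U)=\llbracket w\rrbracket$ for the injection $w(m,i)=u(-m,i)$, so $\mu_{U,e}=e_*(\mu_z\res\llbracket w\rrbracket)$ and $F\mapsto F\circ e$ is unitary. Lemmas~\ref{L:exmul} and \ref{L:tenor}, now with the single semilinear coordinate, give $p$ and $r$ with $p=r\circ q$, where $q(g)=\overline{g}$. The one genuinely different point is that Lemma~\ref{L:idio}(ii) gives $R_{(-1)z}(\phi)\circ e=R_z(\phi)^{-1}=\overline{R_z(\phi)}$, so the pullback intertwines $\rho_{(-1)z}$ with multiplication by $\overline{R_z(\phi)}$; this conjugate action is precisely what the conjugation built into $q$ matches, so the equivariance of $p$ transfers to $r$ and hence to $\Psi_{U,e}=r\circ(F\mapsto F\circ e)$. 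Here no density argument is needed, since $q$ is already onto.

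I expect the main obstacle to be organizing the equivariance verification: keeping straight how the three maps transform the representations and, in particular, recognizing via Lemma~\ref{L:idio} that pulling back along $h_{\bar\iota}$ (resp. $e$) turns $\rho_{y\oplus z}$ (resp. $\rho_{(-1)z}$) into exactly the product (resp. conjugate) representation that $q$ is designed to intertwine. The substantive content---independence of the random variables $\hat f$ and the existence of the sesquilinear map---has already been isolated in Lemmas~\ref{L:inde}, \ref{L:exmul}, and \ref{L:tenor}, so the present proof is essentially assembly.
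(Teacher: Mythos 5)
Your proposal is correct and follows essentially the same route as the paper: the map is built as the composition of the pullback along $h_{\bar\iota}$ (resp.\ $e$) --- which the paper calls $r'$ and you call $T$ --- with the factorization $r$ of the sesquilinear map $p$ from Lemma~\ref{L:exmul} through the canonical map $q$ via Lemma~\ref{L:tenor}, and equivariance is verified through Lemma~\ref{L:idio} together with density of the span of the image of $q$ in part (i) and surjectivity of $q$ in part (ii). The only difference is cosmetic: the paper packages the equivariance check as a claim about $(r')^{-1}\circ q$, whereas you route it through the intermediate multiplication representation on the product $L^2$ space, which is the same computation.
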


\begin{proof} (i) We need to set up some notation. 
We have ${\bar\iota} = (\iota^y, \iota^{z})$ for some 
\[
\iota^y\colon D(y)\to D(y\oplus z)\;\hbox{ and }\;\iota^{z}\colon D(z)\to D(y\oplus z).
\]
Let $U = \llbracket u\rrbracket$ for some injection $u\colon D(y\oplus z)\to 2^n$.
Define injections 
$v\colon D(y)\to 2^n$ and $w\colon D(z)\to 2^n$ by letting 
\[
v= u\circ \iota^y, \;\hbox{ and }\; w= u\circ\iota^z. 
\]
By Lemma~\ref{L:intp}, we have 
\begin{equation}\label{E:uvw1}
h_{\bar\iota}(\llbracket v\rrbracket\times \llbracket w\rrbracket) = U. 
\end{equation}

The canonical bilinear Hilbert space map, as in \eqref{E:seca} with $k=2$ and $l=0$, 
\[
q\colon L^2\big(\mu_y\res \llbracket v\rrbracket\big)\times L^2\big(\mu_z\res \llbracket w\rrbracket\big)\to  
L^2\big(\mu_y\res \llbracket v\rrbracket  \times \mu_z\res \llbracket w\rrbracket\big)
\]
is given by 
\[
q\big(f, g\big)(\alpha, \beta)=f(\alpha)g(\beta). 
\]
The following claim gives the relevant to us equivariance property of $q$. 

\begin{claim}\label{C:isrp}
There is a Hilbert space isomorphism 
\[
r'\colon L^2(\mu_{U,{\bar\iota}}) \to L^2\big(\mu_y\res \llbracket v\rrbracket \times \mu_z\res \llbracket w\rrbracket\big)
\]
such that the bilinear Hilbert map 
\[
(r')^{-1}\circ q\colon  L^2(\mu_y\res \llbracket v\rrbracket)\times L^2(\mu_z\res \llbracket w\rrbracket)   \to L^2(\mu_{U,{\bar\iota}})
\]
is equivariant between $(\rho_y, \rho_z)$ on $L^2(\mu_y\res \llbracket v\rrbracket)\times L^2(\mu_z\res \llbracket w\rrbracket)$ and 
$\rho_{U, {\bar\iota}}$ on $L^2(\mu_{U,{\bar\iota}})$. 
\end{claim} 

\noindent {\em Proof of Claim~\ref{C:isrp}.} For $f\in  L^2(\mu_{U,{\bar\iota}})$, we define 
\[
r'(f) = f\circ h_{\bar\iota}.
\]
It is clear from \eqref{E:uvw1} and from the definition of $\mu_{U,{\bar\iota}}$ that $r'$ is a Hilbert space isomorphism. 

It remains to check the equivariance condition on $(r')^{-1}\circ q$, that is, for $\phi\in L^0$ and functions 
$f\in  L^2(\mu_y\res \llbracket v\rrbracket)$ and $g\in L^2(\mu_z\res \llbracket w\rrbracket)$, we need to see that  
\[
q\Big(\rho_y(\phi)\big(f\big), \rho_z(\phi)\big(g\big)\Big)\circ h_{\bar\iota}^{-1}  = 
\rho_{U,{\bar\iota}}(\phi)\big(q(f,g)\circ h_{\bar\iota}^{-1}\big).
\]
Checking the condition above amounts to showing that 
\[
q\big(R_y(\phi) f,\, R_z(\phi) g\big) 
= \big( R_{y\oplus z} (\phi)  \circ h_{\bar\iota})\big)\, q(f,g), 
\]
which is a restatement of Lemma~\ref{L:idio}\,(i). The claim is proved. 

\smallskip

Note that $v$ and $w$ have disjoint images since $\iota^y$ and $\iota^z$ have disjoint images and $u$ is injective. 
By Lemma~\ref{L:exmul}, these conditions guarantee the existence of 
a bilinear Hilbert space map 
\[
p\colon L^2(\mu_y\res \llbracket v\rrbracket )\times L^2\big(\mu_z\res \llbracket w\rrbracket\big)\to H_0(\gamma).
\]
By Lemma~\ref{L:tenor}, $p$ factors through the canonical bilinear map $q$. 
The factorization 
produces a Hilbert space map  
\[
r\colon L^2\big(\mu_y\res \llbracket v\rrbracket \times \mu_z\res \llbracket w\rrbracket\big)\to  H_0(\gamma)
\]
so that $p=r\circ q$. 

Let 
\[
r'\colon L^2(\mu_{U,{\bar\iota}}) \to L^2\big(\mu_y\res \llbracket v\rrbracket \times \mu_z\res \llbracket w\rrbracket\big)
\]
be the Hilbert space isomorphism from Claim~\ref{C:isrp}. Define 
\[
\Psi_{U, {\bar\iota}} = r\circ r'\colon L^2(\mu_{U, {\bar\iota}})\to H_0(\gamma). 
\]
Clearly, this is a Hilbert space map. 

It remains to show that $\Psi_{U, {\bar\iota}}$ is equivariant between $\rho_{U, {\bar\iota}}$ and $\sigma$. Set 
\[
q'= (r')^{-1}\circ q. 
\]
Then, by Claim~\ref{C:isrp}, $q'$ is an equivariant bilinear Hilbert space map and obviously 
\[
p= \Psi_{U, {\bar\iota}}\circ q'. 
\]
Now the equivariance of $\Psi_{U, {\bar\iota}}$ follows from this equation, the equivariance of $p$ and $q'$, and the density of the linear 
span of the image of $q'$, which is equal to the image of $q$, in the space 
$L^2\big(\mu_y\res \llbracket v\rrbracket \times \mu_z\res \llbracket w\rrbracket\big)$.

(ii) This is similar to (i). 
Let $U = \llbracket u\rrbracket$ for some injection $u\colon D((-1) z)\to 2^n$.
Define an injection $w\colon D(z)\to 2^n$ by letting $w(k,i)= u(-k,i)$. 
An easy calculation shows that 
\begin{equation}\label{E:gagar}
e \big(\llbracket w\rrbracket \big) =U.
\end{equation}

The canonical semilinear Hilbert space map, as in \eqref{E:seca} with $k=0$ and $l=1$, 
\[
q\colon L^2\big(\mu_z\res \llbracket w\rrbracket\big)\to  
L^2\big( \mu_z\res \llbracket w\rrbracket\big)
\]
is $q(f) =\overline{f}$. The following claim gives the equivariance property of $q$.

\begin{claim}\label{C:isrp2}
There is a Hilbert space isomorphism 
\[
r'\colon L^2(\mu_{U,e}) \to L^2\big(\mu_z\res \llbracket w\rrbracket\big)
\]
such that the Hilbert map 
\[
(r')^{-1}\circ q\colon  L^2(\mu_z\res \llbracket w\rrbracket)   \to L^2(\mu_{U,e})
\]
is equivariant between $\rho_z$ on $L^2(\mu_z\res \llbracket w\rrbracket)$ and $\rho_{U,e}$ on $L^2(\mu_{U,e})$. 
\end{claim} 

\noindent {\em Proof of Claim~\ref{C:isrp2}.} For $f\in  L^2(\mu_{U,{\bar\iota}})$, we define 
\[
r'(f) = f\circ e.
\]
From \eqref{E:gagar} and from the definition of $\mu_{U,e}$, $r'$ is a Hilbert space isomorphism. 

To check the equivariance of $(r')^{-1}\circ q$, we have to show that, for $\phi\in L^0$ and a function 
$g\in L^2(\mu_z\res \llbracket w\rrbracket)$, 
\[
q\big(\rho_z(\phi)(g)\big)\circ e^{-1}  = \rho_{U, e}(\phi)\big(q(g)\circ e^{-1}\big).
\]
This condition is equivalent to 
\[
\overline{R_z(\phi) g} 
= \big( R_{(-1)z}  \circ e\big)\, \overline{g},  
\]
which is Lemma~\ref{L:idio}\,(ii) with $\ell=-1$ since $\overline{R_z(\phi)} = R_z(\phi)^{-1}$. The claim follows.  

\smallskip 

By Lemma~\ref{L:exmul}, there exists 
a semilinear Hilbert space map 
\[
p\colon L^2\big(\mu_z\res \llbracket w\rrbracket\big)\to H_0(\gamma).
\]
By Lemma~\ref{L:tenor}, $p$ factors through the canonical map $q$. 
The factorization produces a Hilbert space map 
\[
r\colon L^2\big( \mu_z\res  \llbracket w \rrbracket\big)\to  H_0(\gamma)
\]
so that $p=r\circ q$. 
We take now $r'$ from Claim~\ref{C:isrp2} and define $\Psi_{U,e} = r\circ r'$. 
This is a Hilbert space map. To see the equivariance of $\Psi_{U,e}$ between $\rho_{U, e}$ and $\sigma$, set 
\[
q'= (r')^{-1}\circ q. 
\]
Then, by Claim~\ref{C:isrp2}, $q'$ is an equivariant semilinear Hilbert space map and obviously 
$p= \Psi_{U,e}\circ q'$.
The equivariance of $\Psi_{U,e}$ follows from this equation, the equivariance of $p$ and $q'$, and the 
surjectivity of $q'$. 
\end{proof}

\section{Proof of Theorem~\ref{T:notL}}\label{S:finp}

By Lemmas~\ref{L:abseio}\,(ii) and \ref{L:abse}\,(ii), the general case reduces to proving 
\begin{equation}\label{E:plen} 
\mu_y\otimes \mu_z \preceq \mu_{y\oplus z}
\end{equation}
and 
\begin{equation}\label{E:taen} 
(-1)\mu_z\preceq \mu_{(-1)z}. 
\end{equation}

We show \eqref{E:plen} first. Set $x= y\oplus z$. Note that, for each set $U$ basic for $x$, we have 
\begin{equation}\label{E:alml}
\big(\mu_y\otimes \mu_z\big)\res U = \sum_{\bar\iota} \mu_{U,{\bar\iota}},
\end{equation} 
where $\bar\iota$ ranges over all pairs as in \eqref{E:seq} for $y$ and $z$ and $\mu_{U,{\bar\iota}}$ is as in \eqref{E:meh}. 
Recall that basic sets for $x$ form a basis of $C^0_x$ by Lemma~\ref{L:basi} and $\mu_y\otimes \mu_z$ is concentrated in 
$C^0_x$ by Lemma~\ref{L:compa}\,(ii). So, by \eqref{E:alml}, 
to see \eqref{E:plen}, it suffices to show that, for each set $U$ basic for $x$ and each $\bar\iota$ as above, 
\[
\mu_{U,{\bar\iota}}\preceq \mu_x. 
\]
It will be enough to see that, for each compact set $K\subseteq U$, if $\mu_{U,{\bar\iota}}(K)>0$, then $\mu_x(K)>0$. 

Fix a compact set $K\subseteq U$ with $\mu_{U,{\bar\iota}} (K)>0$. Consider 
\[
\Gamma = \Phi\circ \Psi_{U, {\bar\iota}} \colon L^2(\mu_{U,{\bar\iota}})\to H_\tau, 
\]
where $\Phi$ and $\Psi_{U, {\bar\iota}}$ are as in \eqref{E:hot} and Lemma~\ref{L:emps}\,(i), respectively. 
Then $\Gamma$ is a Hilbert space map  
and, by Lemma~\ref{L:kpr} in combination with Lemma~\ref{L:co1}, we have 
\begin{equation}\label{E:prn}
\Gamma\big( L^2(\mu_{U,{\bar\iota}}\res K)\big)\subseteq [x,K]^\tau. 
\end{equation} 
Since $\mu_{U, {\bar\iota}}(K)>0$, the space $L^2(\mu_{U,{\bar\iota}}\res K)$ is nontrivial. Therefore, since $\Gamma$, being 
a Hilbert space map, is an embedding, 
$\Gamma\big( L^2(\mu_{U, {\bar\iota}}\res K)\big)$ 
is non-trivial, which, by \eqref{E:prn}, makes the space $[x,K]^\tau$ non-trivial. Now, it follows, 
by Lemma~\ref{L:co2}, that there exits $j$, for which the space 
$\widetilde{L^2}(\mu^j_x\res \widetilde{K})$ is non-trivial, that is, $\mu^j_x(\widetilde{K})>0$. (Recall 
the definition of $\widetilde{K}$ from \eqref{E:goar}.) Since $\mu^j_x$, being compatible with $x$,  
is invariant under all good homeomorphisms of $C_x$, this last inequality gives $\mu^j_x(K)>0$. 
Since, for each $j$, $\mu^j_x\preceq \mu_x$, we get $\mu_x(K)>0$, as required. 

The proof of \eqref{E:taen} is similar. Since, for each set $U$ basic for $(-1)z$, we have 
\begin{equation}\notag
(-1)\mu_z\res U = \mu_{U,e},
\end{equation} 
where $\mu_{U,e}$ is as in \eqref{E:mee}, it suffices 
to see that, for each compact set $K\subseteq U$, if $\mu_{U,e}(K)>0$, then $\mu_{(-1)z}(K)>0$.
We fix a compact set $K\subseteq U$ with $\mu_{U,e} (K)>0$ and consider 
\[
\Gamma = \Phi\circ \Psi_{U, e} \colon L^2(\mu_{U,e})\to H_\tau, 
\]
where $\Phi$ and $\Psi_{U, e}$ are as in \eqref{E:hot} and Lemma~\ref{L:emps}\,(ii), respectively. 
Then $\Gamma$ is a Hilbert space map, for which, by combining Lemmas~\ref{L:kpr} and \ref{L:co1}, 
we have 
\begin{equation}\notag
\Gamma\big( L^2(\mu_{U,e}\res K)\big)\subseteq [(-1)z,K]^\tau. 
\end{equation} 
The proof is now finished as in the case of \eqref{E:plen}. 

\bigskip

\noindent {\bf Acknowledgement.} I thank Justin Moore and Joe Rosenblatt for stimulating conversations and email exchanges 
on the subject matter of the paper. I am also grateful to Eli Glasner and Benjy Weiss for pointing out the martingale proof of Theorem~\ref{T:ert}  and 
to Eli Glasner for clarifications concerning the literature.

\end{document}